\documentclass[numbers,compress]{vmsta}
\usepackage{dcolumn}

\newtheorem{thm}{Theorem}  
\newtheorem{remark}{Remark}  
\newtheorem{lemma}{Lemma}  
\newtheorem{cor}{Corollary}  

\theoremstyle{definition}
\newtheorem{defin}{Definition}   

\hyphenation{de-si-de-rium}

\newcolumntype{d}[1]{D{.}{.}{#1}}

\begin{document}
\begin{frontmatter}

\title{Generalization of Doob  decomposition Theorem.}
\author[a]{\inits{N.}\fnm{Nicholas}\snm{Gonchar}\corref{cor1}}\email{mhonchar@i.ua}
\cortext[cor1]{Corresponding author.}

\address[a]{Bogolyubov Institute for Theoretical Phisics of NAS, Kyiv, Ukraine
}

\markboth{N. Gonchar}{A sample document}

\begin{abstract}
In the paper, we introduce the notion of a local  regular supermartingale relative to a convex set of equivalent measures and prove for it an  optional  Doob  decomposition in the discrete case. This Theorem is a generalization of the  famous  Doob  decomposition onto the case of supermartingales relative to a convex set of equivalent measures. 
\end{abstract}

\begin{keyword} random process, convex set of equivalent measures, optional Doob decomposition, regular supermartingale, martingale.
\MSC[2010]  60G07, 60G42
\end{keyword}

\end{frontmatter}

\section{Introduction.}

In the paper, we generalize Doob  decomposition
  for supermartingales relative to one measure  onto the case of supermartingales relative to a convex set of equivalent measures.  For supermartingales relative to one measure for  continuous time Doob's result was generalized in papers \citep{Meyer1, Meyer2}.

At the beginning, we prove the auxiliary statements giving sufficient conditions 
of the existence of maximal element in a maximal chain, of the existence of nonzero non-decreasing process such that the sum of a supermartingale and this process is again a supermartingale relative to a convex set of equivalent measures needed for the main Theorems. In  Theorem \ref{ctt5} we give sufficient conditions of the existence of the optional  Doob  decomposition for the special case as the set  of measures  is generated  by finite set of equivalent  measures  with bounded as below and above the Radon - Nicodym derivatives. After that, we introduce the notion of a regular supermartingale. Theorem \ref{ct4} describes   regular supermartingales. In  Theorem \ref{reww1} we give the necessary and sufficient conditions of regularity of supermartingales.
  Theorem \ref{kj1} describes  the structure of non-decreasing process for a  regular supermartingale. 
 Then we introduce the notion of a local regular supermartingale relative to a convex set of equivalent measures.
At last, we prove  Theorem \ref{hf1} asserting that if the optional decomposition for a supermartingale is valid, then it is local regular one. 
Essentially, Theorem \ref{hf1} and \ref{mars1} give the necessary and sufficient conditions of local regularity of supermartingale. 

After that, we prove auxiliary statements nedeed for the description of local regular supermartingales. Theorem \ref{mars12} gives the necessary and sufficient conditions for a  special  class of nonnegative supermartingales to be  local regular ones. In Theorems \ref{mars5} and \ref{mmars1} we describe a wide class of local regular supermartingales. On the basis of these Theorems we introduce  a certain class of local regular supermartingales  and prove Theorem \ref{mmars9} giving the necessary and sufficient conditions  for nonnegative uniformly integrable supermartingale to belong to this class. Using the results obtained we give examples of construction of  local regular supermartingales.
 At last, we prove also Theorem \ref{ttt9} giving  possibility to construct local regular supermartingales.

The optional decomposition for supermartingales plays fundamental role for risk assessment on incomplete markets  \cite{Gonchar2},     
\cite{Gonchar514}, \cite{Gonchar555},  \cite{Gonchar557}. 
Considered in the paper problem is generalization of corresponding one  that  appeared in mathematical finance  about optional decomposition for supermartingale and which is related with construction of superhedge strategy on incomplete financial markets. First, the optional decomposition for supermartingales was opened by  El Karoui N. and  Quenez M. C. \cite{KarouiQuenez} for diffusion processes. After that, Kramkov D. O. \cite{Kramkov}, \cite{FolmerKramkov1} proved the optional decomposition for nonnegative bounded supermartingales.  Folmer H. and Kabanov Yu. M.  \cite{FolmerKabanov1},  \cite{FolmerKabanov}  proved analogous result for an arbitrary supermartingale. Recently, Bouchard B. and Nutz M. \cite{Bouchard1} considered a class of discrete models and proved the necessary and sufficient conditions for validity of optional decomposition. 
Our statement of the problem unlike the above-mentioned one and it  is more general:  a supermartingale relative to a convex set of equivalent  measures is given  and it is necessary to find conditions on the supermartingale and the set of measures  under that  optional decomposition exists.
Generality of our statement of the problem is that we do not require that the considered  set of measures was generated by random process that is a local martingale  as it is done in the papers \cite{Bouchard1, KarouiQuenez, Kramkov, FolmerKabanov} and that is important for the proof of the  optional decomposition in these papers.
  \vskip 5mm

\section{Discrete case.}

We assume that on a measurable space $\{\Omega,\mathcal{F}\}$ a filtration    ${\mathcal{F}_{m}\subset\mathcal{F}_{m+1}}\subset\mathcal{F}, \ m=\overline{0, \infty},$ and a family of measures $ M$ on $\mathcal{F}$ are given. Further, we assume that ${\cal F}_0=\{\emptyset, \Omega \}.$
A random process $\psi={\{\psi_{m}\}_{m=0}^{\infty}}$ is said to be adapted one relative to the filtration $\{{\cal F}_m\}_{m=0}^{\infty}$ if $\psi_{m}$ is ${\cal F}_m$ measurable random value for all $m=\overline{0,\infty}.$
\begin{defin}
An adapted random process  $f={\{f_{m}\}_{m=0}^{\infty}}$ is said to be   a supermartingale relative to the filtration ${\cal F}_m,\ m=\overline{0,\infty},$ and the  family of measures   $ M$  if $E^P|f_m|<\infty, \ m=\overline{1, \infty}, \ P \in M,$ and the inequalities 
\begin{eqnarray}\label{pk11} 
E^P\{f_m|{\cal F}_k\} \leq f_k, \quad 0 \leq k \leq m, \quad m=\overline{1, \infty}, \quad P \in M,
\end{eqnarray}
are valid.
\end{defin}
We consider that the filtration  ${\cal F}_m,\ m=\overline{0,\infty},$ is fixed. Further,   for a supermartingale  $f$ we use as denotation $\{f_{m}, {\cal F}_m\}_{m=0}^{\infty} $ and denotation  $\{f_{m}\}_{m=0}^{\infty}.$

Bellow in a few theorems, we consider a  convex set of equivalent measures $M$ satisfying conditions: Radon -- Nicodym derivative of any measure $Q_1 \in M$ with respect to any measure  $Q_2 \in M$ satisfies inequalities 
\begin{eqnarray}\label{gon1}
0< {l\leq\frac{dQ_{1}}{dQ_{2}}\leq L}< \infty,\quad  Q_1, \   Q_2 \in M,  
\end{eqnarray}
where real numbers    $l,\ L$ do not depend on $Q_1, \ Q_2 \in M.$

 \begin{thm}\label{t1} Let ${\{f_{m}, {\cal F}_m\}_{m=0}^{\infty} }$ be a supermartingale concerning a convex set of equivalent measures $M$ satisfying conditions (\ref{gon1}).  If  for a certain measure  $P_{1}\in M$ there exist a natural number $1 \leq m_0<\infty,$  and ${\cal F}_{m_0-1}$ measurable nonnegative random value $  \varphi_{m_0},$  $P_1(\varphi_{m_0}>0)>0,$  such that   the inequality
$$f_{m_0-1}-E^{P_1}\{f_{m_0}|\mathcal{F}_{m_0-1}\}\geq \varphi_{m_0},$$
is valid, then  
\begin{eqnarray*}
f_{m_0-1}-E^{Q}\{f_{m_0}|\mathcal{F}_{m_0-1}\}\geq \frac{l }{1+L}\varphi_{m_0}, \quad   Q\in M_{\bar \varepsilon_0},
\end{eqnarray*}
where
$$  M_{\bar \varepsilon_0}=\{Q\in M, \ Q=(1-\alpha)P_{1}+\alpha P_{2},\ 0\leq\alpha \leq  \bar \varepsilon_0, \ P_{2}\in M \},  \quad P_{1}\in M,$$
 $$\bar \varepsilon_0=\frac{L}{1+L}. $$
\end{thm}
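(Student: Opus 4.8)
The plan is to localize the whole argument to the single transition from $m_0-1$ to $m_0$ and to rewrite the conditional expectation under the mixture $Q=(1-\alpha)P_1+\alpha P_2$ in terms of those under $P_1$ and $P_2$ via the Bayes rule. Writing $Z=\frac{dP_2}{dP_1}$, $\zeta=E^{P_1}\{Z|\mathcal{F}_{m_0-1}\}$ and $\rho=\frac{dQ}{dP_1}=(1-\alpha)+\alpha Z$, so that $\bar\rho:=E^{P_1}\{\rho|\mathcal{F}_{m_0-1}\}=(1-\alpha)+\alpha\zeta$, and using $E^{P_1}\{f_{m_0}Z|\mathcal{F}_{m_0-1}\}=\zeta E^{P_2}\{f_{m_0}|\mathcal{F}_{m_0-1}\}$, I would first establish the identity
\[
f_{m_0-1}-E^{Q}\{f_{m_0}|\mathcal{F}_{m_0-1}\}=\frac{(1-\alpha)\bigl(f_{m_0-1}-E^{P_1}\{f_{m_0}|\mathcal{F}_{m_0-1}\}\bigr)+\alpha\zeta\bigl(f_{m_0-1}-E^{P_2}\{f_{m_0}|\mathcal{F}_{m_0-1}\}\bigr)}{\bar\rho}.
\]

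Next I would estimate the numerator from below. Its first bracket is $\geq\varphi_{m_0}$ by the hypothesis, while the second bracket equals $E^{P_2}\{f_{m_0-1}-f_{m_0}|\mathcal{F}_{m_0-1}\}\geq0$ because $f$ is a supermartingale relative to $M$ and $P_2\in M$. Since $\alpha\geq0$ and $\zeta\geq0$, the numerator is at least $(1-\alpha)\varphi_{m_0}$, so that $f_{m_0-1}-E^{Q}\{f_{m_0}|\mathcal{F}_{m_0-1}\}\geq(1-\alpha)\varphi_{m_0}/\bar\rho$.

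The delicate point, and the step I expect to be the real obstacle, is the bound on $\bar\rho$. The naive estimate $\zeta\leq L$ (hence $\bar\rho\leq(1-\alpha)+\alpha L$) is too lossy: it produces a constant of the form $\tfrac{1}{1+L^2}$, which need not dominate $\tfrac{l}{1+L}$, and in fact dropping the second bracket together with $\zeta\le L$ cannot give the stated inequality in general. The correct device is to invoke the lower density bound on the \emph{reversed} derivative: since $M$ is convex, $Q=(1-\alpha)P_1+\alpha P_2\in M$, and then condition (\ref{gon1}) applied to $P_1,Q\in M$ yields $\frac{dP_1}{dQ}\geq l$, i.e.\ $\rho=\frac{dQ}{dP_1}\leq\frac1l$ pointwise, whence $\bar\rho=E^{P_1}\{\rho|\mathcal{F}_{m_0-1}\}\leq\frac1l$. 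Combining this with the previous display gives $f_{m_0-1}-E^{Q}\{f_{m_0}|\mathcal{F}_{m_0-1}\}\geq l(1-\alpha)\varphi_{m_0}$.

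Finally, for $Q\in M_{\bar\varepsilon_0}$ one has $\alpha\leq\bar\varepsilon_0=\frac{L}{1+L}$, so $1-\alpha\geq\frac{1}{1+L}$, and the chain closes to $f_{m_0-1}-E^{Q}\{f_{m_0}|\mathcal{F}_{m_0-1}\}\geq\frac{l}{1+L}\varphi_{m_0}$, as claimed. Once the identity and the inequality $\bar\rho\leq1/l$ are in place the remaining estimates are immediate; the only routine care needed is in justifying the Bayes representation and the pointwise density bound (equivalence of the measures and well-definedness of the conditional densities), after which the monotonicity in $\alpha$ is trivial.
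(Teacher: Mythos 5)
Your proof is correct and is essentially the paper's own argument in a different notation: the Bayes-rule identity you derive is just the conditional form of the paper's computation of $\int_B[f_{m_0-1}-E^Q\{f_{m_0}|\mathcal{F}_{m_0-1}\}]\,dQ$ over arbitrary $B\in\mathcal{F}_{m_0-1}$, and both proofs then drop the nonnegative $P_2$-term via the supermartingale property and invoke exactly the two bounds $\frac{dP_1}{dQ}\geq l$ (valid since $Q\in M$ by convexity) and $1-\alpha\geq\frac{1}{1+L}$. Your identification of $\bar\rho\leq 1/l$ as the essential step matches the paper's use of the factor $\frac{dP_1}{dQ}\geq l$ under the integral.
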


\begin{proof} Let   $B\in\mathcal{F}_{m_0-1}$ and  $Q=(1-\alpha)P_{1}+\alpha P_{2}, \ P_2 \in M,  \ 0<\alpha<1.$
Then 
\begin{eqnarray*}
\int\limits_{B}[f_{m_0-1}-E^{Q}\{f_{m_0}|\mathcal{F}_{m_0-1}\}]dQ=
\end{eqnarray*}
\begin{eqnarray*}
\int\limits_{B}E^{Q}\{[f_{m_0-1}-f_{m_0}]|\mathcal{F}_{m_0-1}\}dQ=
\end{eqnarray*}
\begin{eqnarray*}
\int\limits_{B}[f_{m_0-1}-f_{m_0}]dQ=
\end{eqnarray*}
\begin{eqnarray*}
(1-\alpha)\int\limits_{B}[f_{m_0-1}-f_{m_0}]dP_1+
\end{eqnarray*}
\begin{eqnarray*}
\alpha\int\limits_{B}[f_{m_0-1}-f_{m_0}]dP_2=
\end{eqnarray*}
\begin{eqnarray*} (1- \alpha)\int\limits_{B}[f_{m_0-1}-E^{P_1}\{f_{m_0}|\mathcal{F}_{m_0-1}\}]dP_1+
\end{eqnarray*}
\begin{eqnarray*}
\alpha\int\limits_{B}[f_{m_0-1}-E^{P_2}\{f_{m_0}|\mathcal{F}_{m_0-1}\}]dP_2\geq
\end{eqnarray*}
\begin{eqnarray*}
 (1- \alpha)\int\limits_{B}[f_{m_0-1}-E^{P_1}\{f_{m_0}|\mathcal{F}_{m_0-1}\}]dP_1=
\end{eqnarray*}
\begin{eqnarray*} (1- \alpha)\int\limits_{B}[f_{m_0-1}-E^{P_1}\{f_{m_0}|\mathcal{F}_{m_0-1}\}]\frac{dP_1}{dQ} dQ \geq
\end{eqnarray*}
$$  (1- \alpha) l \int\limits_{B} \varphi_{m_0}dQ\geq (1- \bar \varepsilon_0) l \int\limits_{B} \varphi_{m_0}dQ=\frac{l}{1+L}\int\limits_{B} \varphi_{m_0}dQ.$$
Arbitrariness of $B \in {\cal F}_{m_0-1}$ proves the needed inequality.
\end{proof}

\begin{lemma}\label{l1} Any supermartingale  ${\{f_m, {\cal F}_m\}_{m=0}^{\infty}}$ relative to  a family of measures  $ M$ for which there hold equalities   $E^{P}f_{m}=f_{0}, \ m=\overline{1,\infty},$ \  ${ P\in M},$ is a martingale  with respect to this family of measures and the filtration   ${\cal F}_m,\ m=\overline{1,\infty}.$
\end{lemma}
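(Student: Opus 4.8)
The plan is to exploit the elementary fact that a nonnegative random variable with vanishing expectation is zero almost everywhere, and thereby to upgrade each supermartingale inequality into an equality. Fix an arbitrary measure $P \in M$ and a pair of indices $0 \leq k \leq m$, and set $g = f_k - E^{P}\{f_m|\mathcal{F}_k\}$. By the defining inequality (\ref{pk11}) of a supermartingale we have $g \geq 0$, and the integrability condition $E^{P}|f_m| < \infty$ from the definition guarantees that $g$ is $P$-integrable, so that all the expectations below are legitimate.

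First I would compute $E^{P}g$ by means of the tower property of conditional expectation: since $E^{P}E^{P}\{f_m|\mathcal{F}_k\} = E^{P}f_m$, it follows that $E^{P}g = E^{P}f_k - E^{P}f_m$. At this point the hypothesis $E^{P}f_j = f_0$ for every $j = \overline{1,\infty}$, together with the trivial identity $E^{P}f_0 = f_0$ (recall $\mathcal{F}_0 = \{\emptyset,\Omega\}$, so $f_0$ is constant), yields $E^{P}f_k = E^{P}f_m = f_0$, whence $E^{P}g = 0$.

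Then, combining $g \geq 0$ with $E^{P}g = 0$, I conclude that $g = 0$ $P$-almost surely, that is, $E^{P}\{f_m|\mathcal{F}_k\} = f_k$. Since the measure $P \in M$ and the indices $0 \leq k \leq m$ were arbitrary, this equality is precisely the martingale property relative to the family $M$ and the filtration $\mathcal{F}_m,\ m=\overline{1,\infty}$, which is what we had to establish.

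I do not anticipate any genuine obstacle: the argument is in essence the classical observation that a supermartingale with constant expectation is automatically a martingale. The only points requiring care are the bookkeeping of the hypothesis $E^{P}f_m = f_0$ uniformly across all indices and the verification of integrability, so that both the tower property and the passage from ``nonnegative with zero mean'' to ``identically zero'' are fully justified.
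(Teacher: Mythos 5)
Your proof is correct and complete. For comparison: the paper does not actually supply an argument for this lemma at all --- its ``proof'' consists of a single reference to Kallianpur's book --- so your write-up fills in exactly what the paper delegates. The argument you give is the classical one: for fixed $P \in M$ and $0 \leq k \leq m$, the random variable $g = f_k - E^{P}\{f_m|\mathcal{F}_k\}$ is nonnegative by the supermartingale inequality, has zero mean by the tower property together with the hypothesis $E^{P}f_k = E^{P}f_m = f_0$, and hence vanishes $P$-almost surely. Your attention to the two small points --- that $f_0$ is constant because $\mathcal{F}_0 = \{\emptyset,\Omega\}$, and that integrability of $f_m$ licenses the tower property --- is exactly what is needed; note also that your conclusion $E^{P}\{f_m|\mathcal{F}_k\} = f_k$ for all $0 \leq k \leq m$ is precisely the content of Remark 1 in the paper, so your single argument establishes both the lemma and that remark at once.
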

\begin{proof} The proof of  Lemma \ref{l1} see \cite{Kallianpur}.
\end{proof}
\begin{remark}\label{rem1} 
If the conditions of   Lemma \ref{l1} are valid, then there hold equalities
\begin{eqnarray}\label{jps1} 
E^P\{f_m|{\cal F}_k\}=f_k, \quad 0 \leq k \leq m, \quad m=\overline{1, \infty}, \quad P \in M.
\end{eqnarray}
\end{remark}

Let $f={\{f_{m}, {\cal F}_m \}_{m=0}^{\infty}}$ be a supermartingale relative to a convex set of equivalent measures $M$ and the filtration  ${\cal F}_m,\ m=\overline{0,\infty}.$ 
And let $G$ be a set of adapted non-decreasing  processes   $g={\{g_{m}\}_{m=0}^{\infty}}$, ${g_{0}=0},$ such that  $f+g={\{f_{m}+g_{m}\}_{m=0}^{\infty}}$ is a supermartingale concerning the family of measures    $M$ and the filtration   ${\cal F}_m,\ m=\overline{0,\infty}.$  

Introduce a partial ordering  $\preceq $ in the set of adapted non-decreasing processes $G.$

\begin{defin} We say that an adapted non-decreasing process ${g_{1}=\{g_{m}^{1}\}_{m=0}^{\infty}},$ ${g_{0}^{1}=0,} \ g_1 \in G,$  does not exeed an adapted non-decreasing process ${g_{2}=\{g_{m}^{2}\}_{m=0}^{\infty}},$ ${g_{0}^{2}=0}, \ g_{2} \in G,$  if   $P(g_{m}^{2}-g_{m}^{1}\geq 0)=1, \  m=\overline{1,\infty}.$ This partial ordering we denote by $g_{1}\preceq g_{2}.$
\end{defin}
For  every nonnegative adapted non-decreasing process $g=\{g_{m}\}_{m=0}^{\infty} \in G$ there exists limit  $\lim\limits_{m\to \infty}g_m$ which we denote by $g_{\infty}.$ 

\begin{lemma}\label{l2} Let  ${\tilde{G}}$ be a maximal chain in $G$ and for a certain  ${Q\in M}$ 
$\sup\limits_{g\in \tilde{G}}E_1^{Q}g=$ $\alpha^{Q}<\infty.$
Then there exists a sequence  $g^{s}=\{g_{m}^{s}\}_{m=0}^{\infty}\in \tilde G,$  ${s=1,2,...},$ such that
\begin{eqnarray*}
\sup\limits_{g\in \tilde{G}}E_1^{Q}g=\sup\limits_{s\geq 1}E_1^{Q}g^{s},
\end{eqnarray*}
where
$$ E_1^Qg=\sum\limits_{m=0}^\infty\frac{E^Q g_m}{2^m}, \quad g \in G.$$
\end{lemma}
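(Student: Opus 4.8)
The plan is to treat this as a routine approximation of a supremum by a countable sequence; the maximal-chain structure and the supermartingale background play no essential role here beyond guaranteeing that the functional $E_1^Q$ is well defined. First I would record that every $g\in G$ is non-decreasing with $g_0=0$, so $g_m\geq 0$ for all $m$, whence $E^Q g_m\geq 0$ and the series $E_1^Q g=\sum_{m=0}^\infty E^Q g_m/2^m$ is a sum of nonnegative terms. Consequently $E_1^Q g$ is always defined as an element of $[0,+\infty]$, and the hypothesis $\alpha^Q=\sup_{g\in\tilde G}E_1^Q g<\infty$ forces $E_1^Q g\leq\alpha^Q<\infty$ for every $g\in\tilde G$.

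The construction itself follows directly from the definition of the least upper bound. For each natural $s$ the number $\alpha^Q-1/s$ is not an upper bound of the set $\{E_1^Q g:\ g\in\tilde G\}$, so I can choose $g^s\in\tilde G$ with
\[
\alpha^Q-\frac{1}{s}<E_1^Q g^s\leq\alpha^Q .
\]
Letting $s\to\infty$ along this sequence gives $\sup_{s\geq1}E_1^Q g^s\geq\alpha^Q$. The reverse inequality is immediate, since each $g^s$ belongs to $\tilde G$ and therefore $E_1^Q g^s\leq\sup_{g\in\tilde G}E_1^Q g=\alpha^Q$, so that $\sup_{s\geq1}E_1^Q g^s\leq\alpha^Q$. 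Combining the two bounds yields $\sup_{s\geq1}E_1^Q g^s=\alpha^Q=\sup_{g\in\tilde G}E_1^Q g$, which is the asserted equality.

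There is essentially no genuine obstacle here: the only point requiring a word of care is that the series defining $E_1^Q g$ must converge, and this is handled in advance by the sign of the summands together with the finiteness assumption $\alpha^Q<\infty$. If a monotone approximating sequence were wanted for later use, the total order on the maximal chain $\tilde G$ would let me replace $g^s$ by the largest of $g^1,\dots,g^s$ without changing the supremum; but the statement as posed does not demand this, so I would leave the sequence as constructed.
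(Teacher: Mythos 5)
Your proof is correct and follows essentially the same route as the paper: the paper picks a decreasing sequence $\varepsilon_s \to 0$ and selects $g^s \in \tilde G$ with $\alpha^Q - \varepsilon_s < E_1^Q g^s \leq \alpha^Q$, which is exactly your construction with $\varepsilon_s = 1/s$. The additional remarks you make (nonnegativity of the summands, the trivial reverse inequality) are fine and merely make explicit what the paper leaves implicit.
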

\begin{proof}

Let  $0 < \varepsilon_s<\alpha^{Q}, \ s=\overline{1, \infty},$ be a sequence of real numbers satisfying conditions  $  \varepsilon_s > \varepsilon_{s+1}, \ \varepsilon_s \to 0,$ as $ s \to \infty.$ Then there exists an element  $g^s \in {\tilde{G}}$ such that
$\alpha^{Q} - \varepsilon_s < E_1^{Q}g^s \leq \alpha^{Q}, \  s=\overline{1,\infty}.$ The  sequence $g^s \in {\tilde{G}}, \  s=\overline{1,\infty},$ satisfies Lemma \ref{l2} conditions.
\end{proof}

\begin{lemma}\label{l3} 
 If a  supermartingale  ${\{f_{m}, {\cal F}_m\}_{m=0}^{\infty}}$   relative to  a convex set of equivalent measures  $M$ is such that
\begin{eqnarray}\label{buti1}
 |f_{m}|\leq \varphi,  \quad m=\overline{0,\infty}, \quad E^Q\varphi < T < \infty, \quad   Q \in M, 
\end{eqnarray}
where a real number $T$ does not depend on  $Q \in M,$ then every maximal chain  ${\tilde{G}} \subseteq G$ contains  a maximal element.
\end{lemma}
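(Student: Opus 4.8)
The plan is to produce an explicit candidate for the maximal element as a pointwise monotone limit along a suitably chosen cofinal sequence in the chain, and then to use the maximality of the chain together with the strict weighting in $E_1^Q$ to confirm that this limit both lies in $\tilde G$ and dominates every element of it.

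First I would record the a priori bound that makes Lemma \ref{l2} applicable. Since $f+g$ is a supermartingale and ${\cal F}_0$ is trivial, conditioning on ${\cal F}_0$ gives $E^Q(f_m+g_m)\le f_0$, whence $E^Qg_m\le f_0+E^Q|f_m|\le f_0+T$ for every $g\in G$, every $m$, and the fixed $Q$. Because each $g_m\ge g_0=0$, this yields $E_1^Qg=\sum_m 2^{-m}E^Qg_m\le 2(f_0+T)<\infty$ uniformly over $\tilde G$, so $\alpha^Q=\sup_{g\in\tilde G}E_1^Qg<\infty$ and Lemma \ref{l2} furnishes a sequence $g^s\in\tilde G$ with $\sup_sE_1^Qg^s=\alpha^Q$. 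Exploiting that $\tilde G$ is totally ordered, I would replace $g^s$ by the largest of $g^1,\dots,g^s$ (which still lies in $\tilde G$) so as to arrange $g^1\preceq g^2\preceq\cdots$.

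Next I would define the candidate $g^*=\{g_m^*\}$ by the pointwise limit $g_m^*=\lim_s g_m^s$, which exists and is finite a.s. because $g_m^s$ increases in $s$ while staying bounded in $L^1(Q)$; monotone convergence gives $E^Qg_m^*=\lim_sE^Qg_m^s\le f_0+T<\infty$. The process $g^*$ is adapted, non-decreasing in $m$, with $g_0^*=0$. To see $g^*\in G$ I would pass to the limit in the supermartingale inequality $E^Q\{f_m+g_m^s|{\cal F}_k\}\le f_k+g_k^s$: the right side tends to $f_k+g_k^*$, while conditional monotone convergence handles the left side, so $f+g^*$ is a supermartingale.

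Finally I would identify $g^*$ as the maximal element in two steps. \emph{Comparability:} for any $g\in\tilde G$, either $g\preceq g^{s_0}\preceq g^*$ for some $s_0$, or $g^s\preceq g$ for all $s$ and then $g^*\preceq g$ upon letting $s\to\infty$; hence $\tilde G\cup\{g^*\}$ is a chain and maximality forces $g^*\in\tilde G$. \emph{Topness:} if instead $g^*\preceq g$ for some $g\in\tilde G$, then $E_1^Qg\le\alpha^Q=E_1^Qg^*\le E_1^Qg$, so all inequalities collapse and, term by term, $E^Qg_m=E^Qg_m^*$; combined with $g_m\ge g_m^*$ a.s. this forces $g_m=g_m^*$ a.s., i.e. $g=g^*$. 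Thus every element of $\tilde G$ is $\preceq g^*$. I expect the delicate point to be exactly this last step: converting the scalar equality $E_1^Qg^*=\alpha^Q$ back into the partial order, which works only because the strictly positive weights $2^{-m}$ make $E_1^Q$ faithful to coordinatewise domination; the boundedness hypothesis (\ref{buti1}) is what guarantees the finiteness and the validity of the monotone limits used throughout.
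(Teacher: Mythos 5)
Your proposal is correct and follows essentially the same route as the paper's own proof: the a priori bound $E^Qg_m\le f_0+T$, Lemma \ref{l2}, the running maximum along the chain to get a monotone cofinal sequence, the monotone limit $g^{*}$, and the dichotomy in which the scalar equality $E_1^Qg^{*}=E_1^Qg$ is upgraded to $g=g^{*}$ via the strictly positive weights. If anything, you are slightly more explicit than the paper at two points it leaves tacit, namely verifying $g^{*}\in G$ by conditional monotone convergence and invoking maximality of the chain to conclude $g^{*}\in\tilde G$.
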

\begin{proof}  Let  $g=\{g_m\}_{m=0}^\infty$ belong to  $G,$ then
\begin{eqnarray*}
 {E^{Q}(f_{m}+\varphi+g_{m})\leq f_{0}+T, \quad {m=\overline{1,\infty}}, \quad  Q\in M}.
\end{eqnarray*}
Then inequalities  ${f_{m}+\varphi \geq 0}, \ m=\overline{1,\infty},$ yield
 \begin{eqnarray*}
 {E^{Q}g_{m}\leq f_{0}+T}, \quad {m=\overline{1,\infty}},\quad {\{g_{m}\}_{m=0}^{\infty} \in G}.
 \end{eqnarray*}
Introduce for a certain $Q \in M$ an expectation  for  $g=\{g_m\}_{m=0}^\infty \in G$ 
\begin{eqnarray*}
E_1^Qg=\sum\limits_{m=0}^\infty\frac{E^Q g_m}{2^m}, \quad g \in G.
 \end{eqnarray*}
Let $\tilde G \subseteq G$ be a certain maximal chain.
Therefore, we have inequality
 \begin{eqnarray*}
 \sup\limits_{g\in \tilde{G}}E_1^{Q}g=\alpha_{0}^{Q}\leq f_{0}+T< \infty,
 \end{eqnarray*}
 where   $Q \in M$  and is fixed.
 Due to  Lemma \ref{l2},
\begin{eqnarray*}
{\sup\limits_{g\in \tilde{G}}E_1^{Q}g=\sup\limits_{s\geq 1}E_1^{Q}g^{s}}.
\end{eqnarray*}
In consequence of the linear ordering of elements of  ${\tilde{G}},$
\begin{eqnarray*}
{\max\limits_{1\leq s\leq k}g^{s}=g^{s_{0}(k)}}, \quad {1\leq s_{0}(k)\leq k},
\end{eqnarray*}
where  $s_{0}(k)$  is one of elements of the set  $\{1,2, \ldots, k\}$ on which the considered maximum is reached, that is, $1 \leq s_{0}(k) \leq k,$
and, moreover, 
\begin{eqnarray*}
 {g^{s_{0}(k)} \preceq g^{s_{0}(k+1)}}.
 \end{eqnarray*}
It is evident that
\begin{eqnarray*}
\max\limits_{1\leq s \leq k}E_1^Q g^s = E_1^Q g^{s_0(k)}. 
 \end{eqnarray*}
So, we obtain 
\begin{eqnarray*}
{\sup\limits_{s\geq 1}E_1^{Q}g^{s}=\lim\limits_{k\rightarrow\infty} \max\limits_{1\leq s\leq k}E_1^{Q}g^{s}=\lim\limits_{k\rightarrow\infty}E_1^{Q}g^{s_{0}(k)}=E_1^{Q}\lim\limits_{k\rightarrow\infty}g^{s_{0}(k)}=E_1^{Q}g^{0}},
\end{eqnarray*}
where ${g^{0}=\lim\limits_{k\rightarrow\infty}g^{s_{0}(k)}}$, and that there exists, due to monotony of   ${g^{s_{0}(k)}}$.
Thus,
\begin{eqnarray*}
{\sup\limits_{s\geq 1} E_1^{Q}g^{s}=E_1^{Q}g^{0}=\alpha_{0}^{Q}}.
\end{eqnarray*}
 Show that  $g^0={\{g_{m}^{0}\}_{m=0}^{\infty}}$ is  a maximal element in  ${\tilde G}$.
It is evident that $g^0$ belongs to $G.$
For every element   ${g=\{g_{m}\}_{m=0}^{\infty}} \in \tilde G$ two cases are possible:\\
1) ${\exists k}$ such that  ${g\preceq g^{s_{0}(k)}}$.\\
2) ${\forall k \quad g^{s_{0}(k)}\prec g}$.\\
In the first case  ${g\preceq g^{0}}.$
In the second one from 2) we have ${g^{0}\preceq g}$.
At the same time 

\begin{eqnarray}\label{05}
E_1^{Q}g^{s_{0}(k)}\leq E_1^{Q}g.
\end{eqnarray} 
By passing to the limit in   (\ref{05}), we obtain 
\begin{eqnarray}\label{06}
E_1^{Q}g^{0}\leq E_1^{Q}g.
\end{eqnarray} 
The strict inequality in   (\ref{06}) is impossible, since  ${E_1^{Q}g^{0}=\sup\limits_{g\in\tilde{G}}E_1^{Q}g}$.
Therefore, 
\begin{eqnarray}\label{07}
E_1^{Q}g^{0}= E_1^{Q}g.
\end{eqnarray} 
 The inequality  ${g^{0}\preceq g}$  and the equality (\ref{07}) imply that  ${g=g^{0}.}$
\end{proof}

Let $M$ be a convex set of equivalent probability measures on   $\{\Omega, {\cal F}\}.$ Introduce into $M$ a metric  $|Q_1 -Q_2|=\sup\limits_{A \in {\cal F}}|Q_1(A) - Q_2(A)|,\  Q_1, \ Q_2 \in M.$ 

\begin{lemma}\label{qhon13}
Let $\{f_m, {\cal F}_m \}_{m=0}^\infty$  be a supermartingale relative to a compact convex set of equivalent measures  $M$ satisfying conditions (\ref{gon1}).
 If for every set of measures  $\{P_1, P_2, \ldots, P_s\}, \ s<\infty, \ P_i \in M, \ i=\overline{1,s},$
there exist a natural number   $1 \leq m_0<\infty,$  and depending on this
set of measures  ${\cal F}_{m_0-1}$ measurable nonnegative  random variable $\Delta_{m_0}^s,$   $P_1(\Delta_{m_0}^s>0)>0,$ satisfying conditions
\begin{eqnarray}\label{qhon14}
f_{m_0-1}-E^{ P_i}\{f_{m_0}|{\cal F}_{m_0-1}\} \geq \Delta_{m_0}^s, \quad i=\overline{1,s},  
\end{eqnarray}
then the set  $G$ of  adapted non-decreasing processes   $g=\{g_m\}_{m=0}^\infty , \  g_0=0,$ for which  $\{f_m+g_m\}_{m=0}^\infty$ is a supermartingale relative to the set of measures   $M$ contains nonzero element.
\end{lemma}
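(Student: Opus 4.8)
The plan is to exhibit one nonzero increment at a single time. Concretely, I will search for a step $m_0$ and an $\mathcal{F}_{m_0-1}$-measurable $\Delta\ge 0$ with $P_1(\Delta>0)>0$ such that the uniform one-step bound
$$f_{m_0-1}-E^{Q}\{f_{m_0}|\mathcal{F}_{m_0-1}\}\ge \Delta, \qquad Q\in M,$$
holds. Granting this, the process $g$ given by $g_m=0$ for $m<m_0$ and $g_m=\Delta$ for $m\ge m_0$ is adapted, non-decreasing, nonzero, and $f+g$ is a supermartingale relative to $M$: the only inequality that is not automatic is the one at step $m_0-1$, which is exactly the displayed bound, while for $m\ge m_0$ the increment of $g$ is $\mathcal{F}_m$-measurable and the supermartingale property of $f$ is inherited. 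So everything reduces to producing, at one common step, a nonzero lower bound for the nonnegative one-step drops $\varphi^{Q}_{m_0}:=f_{m_0-1}-E^{Q}\{f_{m_0}|\mathcal{F}_{m_0-1}\}$ that is uniform over all of $M$.

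To get a common step together with a lower bound at finitely many measures I would use compactness of $M$ in the metric $|Q_1-Q_2|=\sup_{A\in\mathcal{F}}|Q_1(A)-Q_2(A)|$: cover $M$ by finitely many balls and let $\{P_1,\dots,P_s\}$ be their centres. The hypothesis applied to this finite set furnishes a single $m_0$ and an $\mathcal{F}_{m_0-1}$-measurable $\Delta^s_{m_0}\ge 0$, $P_1(\Delta^s_{m_0}>0)>0$, with $\varphi^{P_i}_{m_0}\ge\Delta^s_{m_0}$ for $i=\overline{1,s}$. I would then feed each centre $P_i$ into Theorem~\ref{t1}; its proof uses only that in a mixture $Q=(1-\alpha)P_i+\alpha P_2$ the $P_2$-part of the drop is nonnegative and that $dP_i/dQ\ge l$, so it upgrades the bound at $P_i$ to
$$\varphi^{Q}_{m_0}\ge \frac{l}{1+L}\,\Delta^s_{m_0}, \qquad Q\in M_{\bar\varepsilon_0}(P_i),$$
on the whole cone $M_{\bar\varepsilon_0}(P_i)=\{(1-\alpha)P_i+\alpha P_2:\ 0\le\alpha\le\bar\varepsilon_0,\ P_2\in M\}$ about $P_i$, with $\bar\varepsilon_0=\frac{L}{1+L}$.

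The decisive step is to pass from these finitely many cones to the whole of $M$, and here I would lean on condition (\ref{gon1}). Fixing a reference $P^*\in M$, (\ref{gon1}) bounds every density $dQ/dP^*$ between $l$ and $L$, so total-variation convergence $Q_n\to Q$ gives $dQ_n/dP^*\to dQ/dP^*$ in $L^1(P^*)$ and, through the Bayes formula and the integrability of $f_{m_0}$, yields $\varphi^{Q_n}_{m_0}\to\varphi^{Q}_{m_0}$ in $L^1(P^*)$. Since interior measures of $M$ lie in the cones above, and the relative interior of a convex set is dense in it, the bound $\varphi^{Q}_{m_0}\ge\frac{l}{1+L}\Delta^s_{m_0}$ established on the union of the cones survives passage to the limit and extends to all $Q\in M$. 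Taking $\Delta=\frac{l}{1+L}\Delta^s_{m_0}$, which satisfies $P_1(\Delta>0)>0$ since $\frac{l}{1+L}>0$, then yields the sought nonzero element $g$.

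I expect the real difficulty to be exactly this uniformization over the compact set $M$. The cones produced by Theorem~\ref{t1} have a fixed relative radius $\bar\varepsilon_0=\frac{L}{1+L}$, not a shrinking one, and for a strictly convex $M$ no single centre can reach the boundary measures; reconciling this with the requirement of a single step $m_0$ (the step delivered by the hypothesis depends on the chosen finite set) is the heart of the matter, and it is the $L^1(P^*)$-continuity above, made available by (\ref{gon1}), that must carry the bound from a sufficiently rich part of $M$ to every measure in $M$. The factor $\frac{l}{1+L}$ is what keeps the limiting bound strictly positive on $\{\Delta^s_{m_0}>0\}$, so that the constructed $g$ is genuinely nonzero.
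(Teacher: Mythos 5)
Your overall architecture tracks the paper's proof: reduce the lemma to producing a single uniform one-step bound $f_{m_0-1}-E^{Q}\{f_{m_0}|\mathcal{F}_{m_0-1}\}\ge\Delta$ valid for all $Q\in M$, use compactness of $M$ in total variation to select finitely many centres, apply the hypothesis to that finite set to get a common $m_0$ and $\Delta^s_{m_0}$, and invoke Theorem~\ref{t1} to spread the bound to the cones $M_{\bar\varepsilon_0}(P_i)$. All of that is exactly what the paper does, and your final construction $g_m=\Delta\chi_{[m_0,\infty)}(m)$ is the paper's $g^{m_0}$. The genuine gap is in your bridging step from $\bigcup_i M_{\bar\varepsilon_0}(P_i)$ to all of $M$, which rests on the assertion that relative-interior measures of $M$ lie in the union of the cones. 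That assertion is false for a generic finite set of centres: $Q$ belongs to $M_{\bar\varepsilon_0}(P_i)$ only if $P_i+(Q-P_i)/\bar\varepsilon_0$ still lies in $M$ (taking $\alpha<\bar\varepsilon_0$ only pushes the required $P_2$ farther out), i.e.\ $Q$ must sit within the first $\bar\varepsilon_0$-fraction of some chord of $M$ issuing from $P_i$. Take $M=\{(1-t)\mu_0+t\mu_1:\ t\in[0,1]\}$ with $\bar\varepsilon_0=\frac12$ and centres at $t=0,0.1,\dots,0.9$, a perfectly good $0.1$-net: the cone of the centre $c$ is $\{t\in[c/2,(1+c)/2]\}$, so the union of all ten cones is $\{t\le 0.95\}$, which is not dense in $M$, and every relative-interior point with $t\in(0.95,1)$ lies in no cone. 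Your $L^{1}(P^{*})$-continuity of $Q\mapsto f_{m_0-1}-E^{Q}\{f_{m_0}|\mathcal{F}_{m_0-1}\}$ under (\ref{gon1}) is correct, but it has nothing to extend from, because the set on which the bound has been established need not be dense. More structurally, an extreme point of $M$ lies in no cone but its own, so when $M$ has infinitely many extreme points no finite family of cones comes close to exhausting $M$: the obstruction is geometric, and density-plus-continuity cannot remove it.

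For comparison, the paper closes this step not by a limiting argument but by a quantitative inclusion: it claims each cone $M^{P_0,\bar\varepsilon_0}$ contains a total-variation ball $C(P_0,\rho_0)$ of radius $\rho_0$ \emph{independent of} $P_0$ (via the contraction $P\mapsto(1-\bar\varepsilon_0)P_0+\bar\varepsilon_0P$, which fixes $P_0$), covers the compact $M$ by finitely many such balls, and concludes that the cones themselves cover $M$, so the uniform bound holds everywhere at once. (You may notice that the same segment example puts pressure on that inclusion near the extreme points, so the covering step is delicate in the paper as well; but that is the mechanism the paper relies on.) To repair your write-up you would need either to establish such a ball-inside-cone statement and tie your covering radius to it, or to find some other way of reaching the measures near the extreme points of $M$; as written, the passage from the finitely many cones to the whole of $M$ does not close, and you correctly identified this as the heart of the matter without supplying an argument that works.
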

\begin{proof} 
For any point  $P_0 \in  M$ let us define a set of measures
\begin{eqnarray}\label{qd6}
M^{P_0, \bar \varepsilon_0}=\{Q \in  M, \ Q=(1-\alpha)P_0+\alpha P, \  P \in  M , \ 0 \leq \alpha \leq  \bar\varepsilon_0\},
\end{eqnarray} 
 $$\bar\varepsilon_0=\frac{L}{1+L}.$$
Prove that the set of measures  $M^{P_0, \bar \varepsilon_0}$ contains some ball of a positive  radius, that is, there exists a real number $\rho_0>0$ such that  
$M^{P_0, \bar \varepsilon_0}\supseteq C(P_0,\rho_0),$ where $C(P_0,\rho_0)=\{P\in  M,\ |P_0 - P |< \rho_0\}.$ 

Let  $C(P_0,\tilde \rho)=\{P\in  M, \ |P_0 - P |< \tilde \rho\}$ be an open ball in  $ M$ with the center at the point $P_0$ of a radius $0<\tilde \rho<1.$ Consider a map of the set $  M$ into itself given by the law: $f(P)=(1-\bar \varepsilon_0)P_0+\bar \varepsilon_0 P, \ P \in  M.$

The mapping  $ f(P)$   maps  an open ball  $C(P_2^{'},\delta)=\{P\in  M, |P_2^{'} - P |< \delta \}$  with the center at the point  $P_2^{'} $ of a radius  $\delta>0$  into an open ball with the center at the point  $(1- \bar \varepsilon_0)P_0+ \bar \varepsilon_0 P_2^{'}$ of the radius  $\bar \varepsilon_0\delta,$
since  $|(1-\bar \varepsilon_0)P_0+\bar \varepsilon_0 P_2^{'} - (1-\bar \varepsilon_0)P_0-\bar \varepsilon_0 P|=\bar \varepsilon_0|P_2^{'} - P|<\bar \varepsilon_0\delta.$ Therefore, an image of an open set  $ M_0 \subseteq  M$ is an open set $f(M_0)\subseteq  M,$ thus $ f(P)$ is an open mapping. Since $f(P_0)=P_0,$ then the image of the ball
 $C(P_0,\tilde \rho)=\{P\in  M,\ |P_0 - P |< \tilde\rho\}$ is a ball $C(P_0,\bar \varepsilon_0\tilde \rho)=\{P\in M,\ |P_0 - P |< \bar \varepsilon_0\tilde\rho\}$ and it is contained in  $f( M).$ 
 Thus, inclusions  $M^{P_0, \bar \varepsilon_0} \supseteq f( M) \supseteq C(P_0,\bar \varepsilon_0\tilde\rho)$ are valid.  Let us put $\bar \varepsilon_0\tilde\rho=\rho_0.$ Then we have $M^{P_0, \bar \varepsilon_0}\supseteq C(P_0,\rho_0),$ where  $C(P_0,\rho_0)=\{P\in  M,\ |P_0 - P |< \rho_0\}.$ 
Consider an open covering  $\bigcup\limits_{P_0 \in M}C(P_0, \rho_0)$ of the compact set  $ M.$  Due to compactness of  $ M,$ there exists a finite subcovering 
\begin{eqnarray}\label{qalmyd7}
 M=\bigcup\limits_{i=1}^v C(P_0^i, \rho_0) 
\end{eqnarray} 
with the center at  the points  $P_0^i \in M,  \ i=\overline{1, v}, $ and a covering by sets $ M^{P_0^i, \bar \varepsilon_0} \supseteq C(P_0^i, \rho_0),  \ i=\overline{1, v}, $
\begin{eqnarray}\label{qd7}
 M=\bigcup\limits_{i=1}^v M^{P_0^i, \bar \varepsilon_0}. 
\end{eqnarray} 
 
Consider the set of measures $P_0^i \in M,  \ i=\overline{1, v}. $
From  Lemma  \ref{qhon13} conditions, there exist a natural number   $1 \leq m_0<\infty,$  and depending on the
set of measures  $P_0^i \in M,  \ i=\overline{1, v}, $  ${\cal F}_{m_0-1}$ measurable nonnegative  random variable $\Delta_{m_0}^v,$  $P_0^1(\Delta_{m_0}^v>0)>0,$  such that
\begin{eqnarray}\label{cck1}
f_{m_0-1}-E^{ P_0^i}\{f_{m_0}|{\cal F}_{m_0-1}\} \geq \Delta_{m_0}^v, \quad i=\overline{1,v}.
\end{eqnarray}
 Due to  Theorem \ref{t1}, we have 
\begin{eqnarray}\label{ccj1}
f_{m_0-1} - E^Q\{f_{m_0}|{\cal F}_{m_0-1}\} \geq\frac{l}{1+L} \Delta_{m_0}^v=\varphi_{m_0}^v,   \quad Q \in M.
\end{eqnarray}
The last inequality imply 
\begin{eqnarray}\label{qk4}
E^Q\{f_{m_0-1}|{\cal F}_{s}\} - E^Q\{f_{m_0}|{\cal F}_{s}\} \geq E^{Q}\{ \varphi_{m_0}^v|{\cal F}_{s}\}, \quad Q \in M, \quad s <m_0.
\end{eqnarray}
But $ E^Q\{f_{m_0-1}|{\cal F}_{s}\} \leq f_s, \ s < m_0.$  Therefore,
\begin{eqnarray}\label{qk5}
 f_s - E^Q\{f_{m_0}|{\cal F}_{s}\} \geq E^{Q}\{\varphi_{m_0}^v|{\cal F}_{s}\}, \quad Q \in M, \quad s <m_0.
\end{eqnarray}
Since 
\begin{eqnarray}\label{qk6}
 f_{m_0} - E^Q\{f_{m}|{\cal F}_{m_0}\} \geq 0,  \quad Q \in M,  \quad m \geq m_0,
\end{eqnarray}
we have
\begin{eqnarray}\label{qk7}
E^Q\{ f_{m_0}|{\cal F}_{s}\} - E^Q\{f_{m}|{\cal F}_{s}\} \geq 0,  \quad Q \in M, \quad s <m_0, \quad m \geq m_0.
\end{eqnarray}
Adding  (\ref{qk7}) to (\ref{qk5}), we obtain
\begin{eqnarray}\label{qk8}
  f_s  - E^Q\{f_{m}|{\cal F}_{s}\}\geq E^{Q}\{ \varphi_{m_0}^v|{\cal F}_{s}\},  \quad Q \in M, \quad s <m_0, \quad m \geq m_0,
\end{eqnarray}
or
\begin{eqnarray}\label{qk9}
  f_s  - E^Q\{f_{m}|{\cal F}_{s}\}\geq E^{Q}\{\varphi_{m_0}^v|{\cal F}_{s}\}\chi_{[m_0,\infty)}(m) -  \varphi_{m_0}^v\chi_{[m_0,\infty)}(s), 
\end{eqnarray}
$$   Q \in M, \quad s \leq m_0, \quad m \geq m_0.$$
Introduce an adapted non-decreasing process
\begin{eqnarray*}
g^{m_0}=\{g_m^{m_0}\}_{m=0}^\infty, \quad g_m^{m_0}= \varphi_{m_0}^v\chi_{[m_0,\infty)}(m),
\end{eqnarray*}
where $\chi_{[m_0,\infty)}(m)$ is an indicator function of the set $ [m_0,\infty). $
Then   (\ref{qk9}) implies  that
\begin{eqnarray*}
E^Q\{ f_m+ g_m^{m_0}|{\cal F}_k\}\leq f_k +g_k^{m_0}, \quad  0 \leq k \leq m, \quad Q \in M.
\end{eqnarray*}
\end{proof}
 In the Theorem \ref{ctt5} a convex set of equivalent  measures 
\begin{eqnarray}\label{self100}
M =\{Q, \ Q=\sum\limits_{i=1}^n \alpha_iP_i, \  \alpha_i \geq 0, \ i=\overline{1, n}, \ \sum\limits_{i=1}^n \alpha_i=1\}
\end{eqnarray}
satisfies conditions
\begin{eqnarray}\label{self101}
0<l \leq \frac{dP_i}{dP_j} \leq L < \infty,\quad  i,j=\overline{1,n},
\end{eqnarray} 
where  $l,\ L $  are real  numbers.

 Denote by  $G$  the set of all  adapted non-decreasing processes  $g=\{g_m\}_{m=0}^{\infty},$ $ g_0=0,$ such that  $\{f_m+g_m\}_{m=0}^{\infty}$ is a supermartingale relative to all measures from $M.$
\begin{thm}\label{ctt5} 
Let   a supermartingale ${\{f_{m},  {\cal F}_m\}_{m=0}^{\infty}}$ relative to the set of measures (\ref{self100})   satisfy the conditions (\ref{buti1}),  and let  there exist a natural number $1 \leq m_0<\infty,$ and ${\cal F}_{m_0-1}$ measurable nonnegative  random value $\varphi_{m_0}^n,$  $P_1(\varphi_{m_0}^n>0)>0,$ such that
\begin{eqnarray}\label{cqkgon14}
f_{m_0-1}-E^{ P_i}\{f_{m_0}|{\cal F}_{m_0-1}\} \geq \varphi_{m_0}^n, \quad i=\overline{1,n}.
\end{eqnarray}
If  for  the maximal  element 
  $g^{0}=\{g_m^{0}\}_{m=0}^{\infty}$ in a certain  maximal chain  $\tilde G \subseteq G$ the  equalities  
\begin{eqnarray}\label{c5}
E^{P_i}(f_{\infty}+g_{\infty}^{0})=f_{0}, \quad P_i \in M,  \quad i=\overline{1,n},
\end{eqnarray} 
are valid, where $f_{\infty}=\lim\limits_{m \to \infty}f_m,$ $g_{\infty}^0=\lim\limits_{m \to \infty}g_m^0,$ then there hold equalities 
\begin{eqnarray}\label{c5c}
E^{P}\{f_{m}+g_{m}^{0}|{\cal F}_k\}=f_{k}+g_{k}^{0}, \quad 0 \leq k \leq m, \quad  m=\overline{1, \infty}, \quad P \in M. 
\end{eqnarray} 
\end{thm}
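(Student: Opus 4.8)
The plan is to show that the process $f + g^0$ is in fact a martingale relative to $M$, and then to invoke Lemma \ref{l1} together with Remark \ref{rem1} to obtain (\ref{c5c}). Write $h_m = f_m + g_m^0$. Since $g^0 \in G$, the process $\{h_m, {\cal F}_m\}_{m=0}^{\infty}$ is a supermartingale relative to $M$, so for every $P \in M$ the numerical sequence $E^P h_m$ is non-increasing in $m$ and bounded above by $h_0 = f_0$ (recall $g_0^0 = 0$ and ${\cal F}_0 = \{\emptyset, \Omega\}$).

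First I would establish the necessary integrability and convergence. From the proof of Lemma \ref{l3}, the bound (\ref{buti1}) gives $E^Q g_m^0 \leq f_0 + T$ uniformly in $m$ and $Q \in M$; since $g^0$ is non-decreasing, monotone convergence yields $g_m^0 \uparrow g_\infty^0$ with $E^Q g_\infty^0 \leq f_0 + T < \infty$. Because $h = f + g^0$ is a supermartingale bounded below by the integrable random variable $-\varphi$, the supermartingale convergence theorem (under any fixed $P_1 \in M$, hence under all measures of $M$ by equivalence) guarantees that $h_m$, and therefore $f_m = h_m - g_m^0$, converges a.s.; this is the limit $f_\infty$ appearing in the statement. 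Moreover $|h_m| \leq \varphi + g_\infty^0$ with $E^P(\varphi + g_\infty^0) < \infty$ for every $P \in M$, so dominated convergence gives $\lim\limits_{m \to \infty} E^P h_m = E^P(f_\infty + g_\infty^0)$.

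Next I would combine this with the hypothesis. For each generator $P_i$, relation (\ref{c5}) says $\lim\limits_{m\to\infty} E^{P_i} h_m = E^{P_i}(f_\infty + g_\infty^0) = f_0$. But $E^{P_i} h_m$ is non-increasing and bounded above by $f_0$, and a non-increasing sequence whose limit equals its own upper bound must be identically equal to that bound; hence $E^{P_i} h_m = f_0$ for all $m$ and all $i = \overline{1,n}$. Every $P \in M$ has the form $P = \sum_{i=1}^n \alpha_i P_i$ with $\alpha_i \geq 0$, $\sum_{i=1}^n \alpha_i = 1$, whence $E^P h_m = \sum_{i=1}^n \alpha_i E^{P_i} h_m = \sum_{i=1}^n \alpha_i f_0 = f_0$ for every $m$ and every $P \in M$.

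Finally, since $E^P h_m = f_0 = h_0$ for all $m$ and all $P \in M$, Lemma \ref{l1} applied to the supermartingale $h = f + g^0$ shows that it is a martingale relative to $M$, and Remark \ref{rem1} then yields exactly the equalities (\ref{c5c}). The only delicate point is the convergence step: one must secure a.s. convergence of $f_m$ together with a $P$-integrable dominating function, uniformly over $M$, in order to interchange limit and expectation. Once this is in place, the monotone-sequence argument and the convexity of $M$ make the remainder routine; the auxiliary condition (\ref{cqkgon14}) serves only to keep the underlying set $G$ non-trivial and is not itself needed for the implication (\ref{c5}) $\Rightarrow$ (\ref{c5c}).
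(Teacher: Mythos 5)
Your proposal is correct and follows essentially the same route as the paper: both arguments combine the monotonicity of $E^{P_i}(f_m+g_m^0)$ with the limit condition (\ref{c5}) to force $E^{P_i}(f_m+g_m^0)=f_0$ for all $m$, and then invoke Lemma \ref{l1} and Remark \ref{rem1} to upgrade this to the conditional equalities (\ref{c5c}). The only differences are cosmetic: you justify the interchange of limit and expectation by domination with $\varphi+g_\infty^0$ where the paper appeals to uniform integrability, and you extend from the generators $P_i$ to all $P\in M$ by linearity of the unconditional expectations before applying Lemma \ref{l1}, whereas the paper extends the conditional identities afterwards via the weighted formula (\ref{ck1}); your observation that (\ref{cqkgon14}) only serves to make $G$ nontrivial also matches the paper.
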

\begin{proof} 
The set $M$ is compact one  in the introduced metric topology. From the inequalities  (\ref{cqkgon14})  and the formula
\begin{eqnarray}\label{ns23}
 E^{Q}\{f_{m_0}|{\cal F}_{m_0-1}\}=\frac{\sum\limits_{i=1}^{n}\alpha_i E^{P_1}\{\varphi_i|{\cal F}_{m_0-1}\}E^{P_i}\{f_{m_0}|{\cal F}_{m_0-1}\}}{\sum\limits_{i=1}^{n}\alpha_i E^{P_1}\{\varphi_i|{\cal F}_{m_0-1}\}}, \quad Q \in M,
\end{eqnarray}
where $\varphi_i=\frac{dP_i}{dP_1},$ we obtain
\begin{eqnarray}\label{dargon14}
f_{m_0-1}-E^{Q}\{f_{m_0}|{\cal F}_{m_0-1}\} \geq \varphi_{m_0}^n, \quad Q \in M.
\end{eqnarray}
The inequalities (\ref{self101}) lead to inequalities
\begin{eqnarray}\label{mazgon14}
\frac{1}{nL} \leq \frac{dQ}{dP} \leq n L, \quad P, Q \in M.
\end{eqnarray}
  Inequalities (\ref{dargon14}) and (\ref{mazgon14}) imply that  conditions of  Lemma \ref{qhon13}  are satisfied  for any set of measures $Q_1,\ldots,Q_s \in M.$ Hence, it follows  that the set  $G$   contains nonzero element. Let $\tilde G \subseteq G$ be a maximal chain in  $ G$ satisfying condition of  Theorem \ref{ctt5}.
Denote by  $g^0=\{g_m^0\}_{m=0}^\infty,\ g_0^0=0,$ a maximal element  in $\tilde G \subseteq G.$ 
  Theorem \ref{ctt5} and Lemma  \ref{l3} yield  that as $\{f_m\}_{m=0}^{\infty}$ and $\{g_m^0\}_{m=0}^\infty$ are uniformly integrable relative to each measure from $M.$ There exist   therefore limits
$$ \lim\limits_{m \to \infty}f_m=f_{\infty}, \quad \lim\limits_{m \to \infty}g_m^0=g_{\infty}^0$$
with probability 1.
Due to  Theorem  \ref{ctt5}  condition,  in this  maximal chain 
$$E^{P_i}(f_{\infty}+g_{\infty}^0)=f_0, \quad P_i \in M, \quad  i=\overline{1,n} .  $$
Since $ \{f_m+g_m^0\}_{m=0}^\infty$ is a supermartingale concerning all measures from $M,$ we have
\begin{eqnarray}\label{cmykald8}
 E^{P_i}(f_{m}+g_m^0) \leq   E^{P_i}(f_{k}+g_k^0)\leq f_0, \quad k< m, \quad m=\overline{1,\infty}, \quad  P_i \in M.
\end{eqnarray}
By passing to the limit in (\ref{cmykald8}), as $m \to \infty,$ we obtain
\begin{eqnarray}\label{cmykald7} 
f_{0}=E^{P_i}(f_{\infty}+g_{\infty}^0) \leq  E^{P_i}(f_{k}+g_k^0)\leq f_0,  \quad k= \overline{1,\infty}, \quad  P_i \in M. 
\end{eqnarray}
So, $ E^{P_i}(f_{k}+g_k^0)= f_0, \ k=\overline{1,\infty}, \  P_i \in M, \ i=\overline{1,n}.$
Taking into account Remark \ref{rem1} we have
\begin{eqnarray}\label{cd8}
 E^{P_i}\{f_{m}+g_m^0|{\cal F}_k\}=f_k+ g_k^0, \quad 0 \leq  k \leq m, \ m=\overline{1,\infty}, \ P_i \in M, \ i=\overline{1,n}.
\end{eqnarray}
Hence,
$$E^{P}\{f_{m}+g_m^0|{\cal F}_{k}\}=$$
\begin{eqnarray}\label{ck1}
 \frac{\sum\limits_{i=1}^{n}\alpha_i E^{P_1}\{\varphi_i|{\cal F}_{k}\}E^{P_i}\{f_{m}+g_m^0|{\cal F}_{k}\}}{\sum\limits_{i=1}^{n}\alpha_i E^{P_1}\{\varphi_i|{\cal F}_{k}\}}=f_k+g_k^0,  \quad 0 \leq k \leq m, \quad P \in M,
\end{eqnarray}
where $ \varphi_i=\frac{dP_i}{dP_1}, \ i=\overline{1,n}.$ Theorem  \ref{ctt5} is proven.
 \end{proof}
Let $M$ be a convex set of equivalent measures. Bellow,  $G_s$ is  a set  of adapted non-decreasing  processes $\{g_m\}_{m=0}^\infty,$ $ \ g_0=0,$ for which $\{f_m+g_m\}_{m=0}^\infty$ is a supermartingale relative
to all measures from 
 \begin{eqnarray}\label{Wov3}
\hat M_s=\{Q, Q=\sum\limits_{i=1}^s\gamma_i \hat P_i, \ \gamma_i \geq 0, \ i=\overline{1,s}, \ \sum\limits_{i=1}^s\gamma_i=1\},
\end{eqnarray}
where  $\hat P_1, \ldots,\hat P_s \in M$ and satisfy conditions
 \begin{eqnarray}\label{fifa1}
0< {l\leq\frac{d\hat P_i}{d\hat P_j}\leq L}< \infty,\quad  i,j=\overline{1,s},  
\end{eqnarray}
 $l, L$ are  real numbers depending on the set of measures  $\hat P_1, \ldots,\hat P_s \in M.$

\begin{defin}\label{1000h} 
Let   a  supermartingale   $\{f_m, {\cal F}_m\}_{m=0}^\infty$ relative to  a convex set of equivalent measures  $M $ satisfy  conditions (\ref{buti1}). 
 We  call it  regular one if for every   set of measures (\ref{Wov3}) satisfying conditions (\ref{fifa1}) there exist a natural number $1 \leq m_0<\infty,$ and   ${\cal F}_{m_0-1}$ measurable nonnegative random value $\varphi_{m_0}^s,$ $\hat P_1(\varphi_{m_0}^s>0)>0, $ such that  the inequalities 
$$f_{m_0-1}- E^{\hat P_{i}}\{f_{m_0}|{\cal F}_{m_0-1}\}\geq \varphi_{m_0}^s, \quad i=\overline{1,s},$$
 hold and for the maximal element $g^s=\{g_m^s\}_{m=0}^\infty$
in a certain  maximal chain $\tilde G_s \subseteq G_s$  the equalities
\begin{eqnarray}\label{Wov1}
E^{\hat P_i}\{f_m +g_m^s|{\cal F}_{k}\}=f_{k}+g_{k}^s, \quad 0 \leq  k \leq m, \quad i=\overline{1,s},\quad m=\overline{1, \infty},
\end{eqnarray} 
are valid. Moreover, there exists  an adapted nonnegative process $\bar g^0=\{\bar g_m^0\}_{m=0}^\infty, $ $ \bar g_0^0=0,$  $ E^P \bar g_m^0<\infty, \  m=\overline{1, \infty}, \  P \in M,$ not depending on the set of measures 
$\hat P_1, \ldots,\hat P_s$  such that
\begin{eqnarray}\label{WOv2}
E^{\hat P_i}\{g_m^s- g_{m-1}^s|{\cal F}_{m-1}\}=E^{\hat P_i}\{\bar g_m^0|{\cal F}_{m-1}\}, \quad m=\overline{1,\infty}, \quad i=\overline{1,s}.
\end{eqnarray}
\end{defin}
The next Theorem describes  regular supermartingales.
\begin{thm}\label{ct4}   
  Let $\{f_m, {\cal F}_m\}_{m=0}^\infty$ be a regular   supermartingale  relative to  a convex set of equivalent measures  $M. $  
Then for the maximal element   $g^0=\{g_m^0\}_{m=0}^{\infty}$ in  a certain  maximal chain  $\tilde G \subseteq  G$ the equalities  
\begin{eqnarray*}
E^{P_0}(f_m +g_m^0)=f_0, \quad m=\overline{1,\infty},  \quad P_0 \in M,
\end{eqnarray*} 
are valid. There exists   a martingale   $\{\bar M_m,{\cal F}_m\}_{m=0}^\infty$  relative to the family of measures   $M$  such that 
\begin{eqnarray*}
 f_m=\bar M_m- g_m^0, \quad m=\overline{1,\infty}. 
\end{eqnarray*}
Moreover, for the martingale  $\{\bar M_m,{\cal F}_m\}_{m=0}^\infty$   the representation 
\begin{eqnarray*}
\bar M_m=E^{P_0}\{f_{\infty}+ g_{\infty}|{\cal F}_m\},\quad m=\overline{1,\infty},  \quad P_0 \in M,
\end{eqnarray*}
holds, where  $ f_{\infty}+ g_{\infty}=\lim\limits_{m \to \infty}(f_m+g_m).$
\end{thm}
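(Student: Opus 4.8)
The plan is to construct the maximal process $g^0$ explicitly from the universal increment process $\bar g^0$ furnished by the definition of regularity, and then to read off the three assertions from the martingale identity this produces. First I would isolate a one-step identity valid for \emph{each individual} measure. Fix $P\in M$ and apply Definition \ref{1000h} to the one-point family $\hat M_1=\{P\}$, which satisfies (\ref{fifa1}) with $l=L=1$. Denoting by $g^{(P)}=\{g_m^{(P)}\}_{m=0}^\infty$ the corresponding maximal element, the martingale equalities (\ref{Wov1}) taken at $k=m-1$ give $E^{P}\{g_m^{(P)}-g_{m-1}^{(P)}|{\cal F}_{m-1}\}=f_{m-1}-E^{P}\{f_m|{\cal F}_{m-1}\}$, and substituting the universal relation (\ref{WOv2}) I obtain
\begin{eqnarray*}
f_{m-1}-E^{P}\{f_m|{\cal F}_{m-1}\}=E^{P}\{\bar g_m^0|{\cal F}_{m-1}\}, \quad m=\overline{1,\infty}, \quad P\in M.
\end{eqnarray*}
The decisive feature is that $\bar g^0$ does not depend on the chosen family of measures, so this one compensator simultaneously serves every $P\in M$.

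Next I would put $g_m^0=\sum_{k=1}^m\bar g_k^0$; this is adapted, non-decreasing (as $\bar g_k^0\ge 0$), satisfies $g_0^0=0$, and is $P$-integrable for each $m$ because $E^{P}\bar g_k^0<\infty$. Writing $g_m^0=g_{m-1}^0+\bar g_m^0$ with $g_{m-1}^0$ being ${\cal F}_{m-1}$-measurable, the identity above yields $E^{P}\{f_m+g_m^0|{\cal F}_{m-1}\}=f_{m-1}+g_{m-1}^0$ for every $P\in M$, and the tower property propagates this to $E^{P}\{f_m+g_m^0|{\cal F}_k\}=f_k+g_k^0$ for all $0\le k\le m$. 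Hence $f+g^0$ is a martingale relative to the whole set $M$, so $g^0\in G$; moreover $g^0$ is a maximal element of $G$, since any $g\in G$ with $g^0\preceq g$ must satisfy $E^{P}(f_m+g_m)\le f_0=E^{P}(f_m+g_m^0)$, forcing $E^{P}(g_m-g_m^0)\le 0$ and therefore $g_m=g_m^0$ because $g_m-g_m^0\ge 0$. Being maximal, $g^0$ is the top element of any maximal chain $\tilde G\subseteq G$ containing it; evaluating the martingale equality at $k=0$ and using ${\cal F}_0=\{\emptyset,\Omega\}$ gives $E^{P_0}(f_m+g_m^0)=f_0$ for $P_0\in M$, which is the first assertion. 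Setting $\bar M_m=f_m+g_m^0$ furnishes the martingale of the second assertion together with the decomposition $f_m=\bar M_m-g_m^0$.

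For the closed-form representation I would establish uniform integrability along the lines of the proof of Theorem \ref{ctt5}. Condition (\ref{buti1}) dominates $\{f_m\}$ by the integrable majorant $\varphi$, while the first assertion gives $E^{Q}g_m^0=f_0-E^{Q}f_m\le f_0+T$, so the non-decreasing sequence $\{g_m^0\}$ converges to an integrable limit $g_\infty^0$ and is uniformly integrable. Consequently $\bar M=\{\bar M_m\}$ is a uniformly integrable martingale relative to every $P_0\in M$; by the martingale convergence theorem $\bar M_m\to\bar M_\infty$ almost surely and in $L^1$, the limit $f_\infty=\bar M_\infty-g_\infty^0$ exists, and closedness of a uniformly integrable martingale yields $\bar M_m=E^{P_0}\{\bar M_\infty|{\cal F}_m\}=E^{P_0}\{f_\infty+g_\infty^0|{\cal F}_m\}$, the required formula. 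I expect the only genuinely delicate point to be this passage to the limit: one must upgrade the almost sure monotone convergence of $\{g_m^0\}$ to $L^1$-convergence so that the closure identity $\bar M_m=E^{P_0}\{\bar M_\infty|{\cal F}_m\}$ holds for all $P_0\in M$ at once, rather than merely pathwise.
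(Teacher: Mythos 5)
Your proposal is correct and arrives at the same decomposition as the paper, but it reaches the crucial one-step identity $f_{m-1}-E^{P}\{f_m|{\cal F}_{m-1}\}=E^{P}\{\bar g_m^0|{\cal F}_{m-1}\}$, $P\in M$, by a genuinely shorter route. You apply Definition \ref{1000h} to the degenerate family $s=1$, $\hat M_1=\{P\}$, for which (\ref{fifa1}) holds trivially with $l=L=1$, and this hands you the identity for every individual $P\in M$ at once. The paper instead takes an arbitrary finite subset $P_1,\ldots,P_n\in M$ (whose mutual Radon--Nikodym derivatives need not be bounded), passes to the open convex hull $\tilde M_n$ of strictly positive mixtures, whose elements do satisfy (\ref{fifa1}) with explicit $l,L$ computed from the mixing coefficients, derives the martingale property of $N_m=f_m+\sum_{i=1}^m\bar g_i^0$ on $\tilde M_n$, recovers the extreme points by letting the coefficients degenerate ($\alpha_j\to 0$), and proves separately that $G_n=G_0$ before invoking arbitrariness of the finite subset. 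Your shortcut is legitimate under the letter of the definition --- nothing in (\ref{Wov3})--(\ref{fifa1}) excludes $s=1$ (and even if $s\ge 2$ were intended, taking $\hat P_1=\hat P_2=P$ yields the same singleton $\hat M_s$) --- and it makes the whole $\tilde M_n$ and $G_n=G_0$ machinery unnecessary; what the paper's detour buys is only robustness in case the definition were meant to apply solely to nondegenerate families of distinct measures with nontrivial mutual bounds. From the identity onward the two arguments coincide: sum $\bar g^0$ to form $g^0$, verify the martingale property, deduce maximality of $g^0$ from $E^{P}(g_m-g_m^0)\le 0$ combined with $g_m-g_m^0\ge 0$, and close the uniformly integrable martingale $\bar M_m=f_m+g_m^0$ at $f_\infty+g_\infty^0$ exactly as you describe.
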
 
\begin{proof}   For any finite set of measures $P_1, \ldots,P_n,$ $P_i \in M, \ i=\overline{1,n},$
let us introduce into consideration two sets of measures 
\begin{eqnarray*}
 M_n=\{P, \ P=\sum\limits_{i=1}^n\alpha_iP_i, \  \alpha_i \geq 0, \ i=\overline{1,n}, \ \sum\limits_{i=1}^n\alpha_i=1\},
\end{eqnarray*}
\begin{eqnarray*}
\tilde M_n=\{P, \  P=\sum\limits_{i=1}^n\alpha_iP_i, \  \alpha_i > 0, \  i=\overline{1,n}, \ \sum\limits_{i=1}^n\alpha_i=1\}.
\end{eqnarray*}
Let $\hat P_1, \ldots, \hat P_s$ be a certain subset of measures    from $\tilde M_n.$  For every measure $\hat P_i \in \tilde M_n$ the representation   
 $\hat P_i=\sum\limits_{k=1}^n\alpha_k^iP_k $ is valid, where $\alpha_k^i>0,  \ i=\overline{1,s}, \ k=\overline{1,n}.$   The representation for $\hat P_i, \ i=\overline{1,s}, $ imply the validity  of inequalities
\begin{eqnarray*}
0<l=\min\limits_{i,j}\frac{\min\limits_{k}\alpha_k^i}{\max\limits_{k}\alpha_k^j} \leq \frac{d\hat P_i}{d\hat P_j}\leq \max\limits_{i,j}\frac{\max\limits_{k}\alpha_k^i}{\min\limits_{k}\alpha_k^j}=L < \infty, \quad i,j=\overline{1,s}.
\end{eqnarray*}
 Denote by $G_s$  a set  of adapted non-decreasing  processes $\{g_m\}_{m=0}^\infty, \ g_0=0,$ for which $\{f_m+g_m\}_{m=0}^\infty$ is a supermartingale relative
to all measures from 
  $$\hat M_s=\{Q, \  Q=\sum\limits_{i=1}^s\gamma_i \hat P_i, \ \gamma_i \geq 0, \ i=\overline{1,s}, \ \sum\limits_{i=1}^s\gamma_i=1\}.$$
In accordance with the definion of a regular supermartingale,  there exist a natural number $1 \leq  m_0<\infty,$ and   ${\cal F}_{m_0-1}$ measurable nonnegative random value $\varphi_{m_0}^s,$ $\hat P_1(\varphi_{m_0}^s>0)>0, $  such that the inequalities there hold  
$$f_{m_0-1}- E^{\hat P_{i}}\{f_{m_0}|{\cal F}_{m_0-1}\}\geq \varphi_{m_0}^s, \quad i=\overline{1,s},$$
and for a maximal element $g^s=\{g_m^s\}_{m=0}^\infty$
in a certain maximal chain $\tilde G_s \subseteq G_s$ there hold equalities (\ref{Wov1}), (\ref{WOv2}). Equalities (\ref{WOv2}) yield the equalities
$$E^{Q}\{g_{m}^s -  g_{m-1}^s|{\cal F}_{m-1}\}=$$
\begin{eqnarray*}\label{ck1}
 \frac{\sum\limits_{i=1}^{s}\gamma_i E^{\hat P_1}\{\hat \varphi_i|{\cal F}_{m-1}\}E^{\hat P_i}\{g_{m}^s - g_{m-1}^s|{\cal F}_{m-1}\}}{\sum\limits_{i=1}^{s}\gamma_i E^{\hat P_1}\{\hat \varphi_i|{\cal F}_{m-1}\}}=   
\end{eqnarray*}
\begin{eqnarray}\label{Wau1}
\frac{\sum\limits_{i=1}^{s}\gamma_i E^{\hat P_1}\{\hat \varphi_i|{\cal F}_{m-1}\}E^{\hat P_i}\{\bar g_{m}^0|{\cal F}_{m-1}\}}{\sum\limits_{i=1}^{s}\gamma_i E^{\hat P_1}\{\hat \varphi_i|{\cal F}_{m-1}\}}=E^{Q}\{\bar g_{m}^0|{\cal F}_{m-1}\}, 
\end{eqnarray}
$$  m=\overline{1, \infty}, \quad Q \in \hat M_s.$$
where $\hat \varphi_i=\frac{d\hat P_i}{d\hat P_1}, \ i=\overline{1,n}.$
Taking into account the equalities (\ref{Wov1}), we obtain
$$E^{Q}\{f_m+g_{m}^s |{\cal F}_{m-1}\}=$$
\begin{eqnarray*}\label{Fck1}
 \frac{\sum\limits_{i=1}^{s}\gamma_i E^{\hat P_1}\{\hat \varphi_i|{\cal F}_{m-1}\}E^{\hat P_i}\{f_m+g_{m}^s|{\cal F}_{m-1}\}}{\sum\limits_{i=1}^{s}\gamma_i E^{\hat P_1}\{\hat \varphi_i|{\cal F}_{m-1}\}}=   
\end{eqnarray*}
\begin{eqnarray}\label{FWau1}
f_{m-1}+g_{m-1}^s, \quad  m=\overline{1, \infty}, \quad Q \in \hat M_s.
\end{eqnarray}

Thus, we have
\begin{eqnarray}\label{Wau2}
E^{Q}\{g_{m}^s -  g_{m-1}^s|{\cal F}_{m-1}\}=E^{Q}\{\bar g_{m}^0|{\cal F}_{m-1}\},   \quad m=\overline{1, \infty},   \quad Q \in \hat M_s.
\end{eqnarray}
\begin{eqnarray}\label{FWau2}
E^{Q}\{f_m+g_{m}^s|{\cal F}_{m-1}\}=f_{m-1}+g_{m-1}^s,   \quad m=\overline{1, \infty},   \quad Q \in \hat M_s.
\end{eqnarray}
Let us introduce into consideration a random process $\{N_m, {\cal F}_m\}_{m=0}^\infty,$
 where
$$ N_0=f_0, \quad N_m=f_m+\sum\limits_{i=1}^m \bar g_m^0, \quad m=\overline{1, \infty}.$$
It is evident that $E^{Q}|N_m|< \infty, \ m=\overline{1, \infty}, \ Q \in  \hat M_s.$ The definition of  $\{N_m, {\cal F}_m\}_{m=0}^\infty$ and the formulae (\ref{Wau2}), ( \ref{FWau2}) yield
$$E^{Q}\{N_{m-1}- N_m |{\cal F}_{m-1}\}=
E^{Q}\{f_{m-1} - f_m - \bar g_m^0|{\cal F}_{m-1}\}=$$ $$=E^{Q}\{g_m^s- g_{m-1}^s -\bar g_m^0|{\cal F}_{m-1}\}=0, \quad m=\overline{1, \infty}, \quad \ Q \in  \hat M_s.$$
The last equalities imply 
$$ E^{Q}\{N_m|{\cal F}_{m-1}\}=N_{m-1}, \quad m=\overline{1, \infty},  \quad Q \in \hat M_s.$$
Due to arbitrariness of the set of measures $\hat P_1, \ldots, \hat P_s, $ $ \hat P_i \in \tilde M_n,$ we have
\begin{eqnarray}\label{wy1}
E^{P}\{N_m|{\cal F}_{m-1}\}=N_{m-1}, \quad  P \in \tilde M_n,\quad m=\overline{1, \infty}.
\end{eqnarray}
So,  the set $G_0$   of  adapted non-decreasing processes $\{g_m\}_{m=0}^\infty,$ $ \ g_0=0,$ for which $\{f_m+g_m\}_{m=0}^\infty$ is a supermartingale relative
to all measures from $\tilde M_n$ contains nonzero element $\tilde g^0=\{\tilde g_m^0\}_{m=0}^\infty, \ \tilde g_0^0=0,$ $\tilde g_m^0=\sum\limits_{i=1}^m \bar g_m^0, \ m=\overline{1, \infty},$ which is  a maximal element in a  maximal chain $\tilde G_0$ containing this element. Really, if  $g^0=\{ g_m^0\}_{m=0}^\infty, \  g_0^0=0,$ is a maximal element 
 in the maximal chain $\tilde G_0 \subseteq G_0,$ then there hold inequalities
\begin{eqnarray}\label{my1}
E^{P_0}\{f_m+ g_m^0|{\cal F}_{k}\} \leq f_{k}+ g_{k}^0,\quad m=\overline{1,\infty},  \quad 1 \leq k \leq m,\quad P_0 \in \tilde M_n,
\end{eqnarray}
\begin{eqnarray}\label{my2}
E^{P_0}(f_m+ g_m^{0})\leq f_0 ,\quad m=\overline{1,\infty},  \quad P_0 \in \tilde M_n.
\end{eqnarray}
and inequality $\tilde g^0 \preceq g^0$  meaning that $\tilde g_m^{0} \leq  g_m^{0}, \ m=\overline{0, \infty}.$
Equalities (\ref{wy1}) yield
\begin{eqnarray}\label{ty2}
E^{P_0}(f_m+\tilde g_m^{0})= f_0 ,\quad m=\overline{1,\infty},  \quad P_0 \in \tilde M_n.
\end{eqnarray}
 Inequalities  (\ref{my2})  and equalities  (\ref{ty2}) imply
\begin{eqnarray}\label{ty3}
f_0 \geq E^{P_0}(f_m+ g_m^{0})\geq  E^{P_0}(f_m+\tilde g_m^{0})= f_0 ,\quad m=\overline{1,\infty},  \quad P_0 \in \tilde M_n.
\end{eqnarray}
The last inequalities lead to equalities
\begin{eqnarray}\label{ty4}
 E^{P_0}( g_m^{0}- \tilde g_m^{0})= 0, \quad m=\overline{1,\infty},  \quad P_0 \in \tilde M_n.
\end{eqnarray}
But
\begin{eqnarray}\label{ty5}
 g_m^{0}- \tilde g_m^{0}\geq 0,\quad m=\overline{0,\infty}.
\end{eqnarray}
The equalities (\ref{ty4}) and inequalities (\ref{ty5}) yield  $g_m^{0}=\tilde g_m^{0}, \ m=\overline{0,\infty},$ or $\tilde g^0=g^0.$

Prove that $G_n=G_0,$ where $G_n$ is a set of non-decreasing processes  $g=\{g_m\}_{m=0}^{\infty}$ such that  $\{f_m+g_m\}_{m=0}^{\infty}$  is a supermartingale relative to all measures from  $M_n.$  Really, if  $g=\{g_m\}_{m=0}^{\infty}$  is a non-decreasing process from  $G_n,$ then it belongs to $G_0,$ owing to that   $M_n \supset \tilde M_n$ and   $G_n \subseteq G_0.$
Suppose that  $g=\{g_m\}_{m=0}^{\infty}, \ g_0=0,$  is a non-decreasing process from  $G_0.$ It means that 
\begin{eqnarray}\label{c28}
 E^Q\{f_m+  g_m|{\cal F}_{k}\}\leq f_{k}+g_{k}, \quad m=\overline{1,\infty},  \quad 0 \leq  k\leq m, \quad Q \in \tilde M_n.
\end{eqnarray} 
The last inequalities can be written in the form 
$$\sum\limits_{i=1}^n\alpha_i\int\limits_{A}(f_m+  g_m)dP_i\leq \sum\limits_{i=1}^n\alpha_i\int\limits_{A}(f_{k}+  g_{k})dP_i, \quad  m=\overline{1,\infty},  \quad  0 \leq  k \leq m, $$ 
$$   A \in {\cal F}_{k}, \quad  \alpha_i>0, \quad i=\overline{1,n}. $$
By passing to the limit, as $\alpha_j \to 0,  \ \alpha_j>0, \ j \neq i, \  \alpha_i \to 1,$ we obtain 
$$\int\limits_{A}(f_m+  g_m)dP_i\leq \int\limits_{A}(f_{k}+  g_{k})dP_i, \quad i=\overline{1,n},  \quad A \in {\cal F}_{k}, \quad  m=\overline{1,\infty},  \quad  0 \leq  k\leq m.$$
The last inequalities yield inequalities
$$\sum\limits_{i=1}^n\alpha_i\int\limits_{A}(f_m+  g_m)dP_i\leq \sum\limits_{i=1}^n\alpha_i\int\limits_{A}(f_{k}+  g_{k})dP_i, \quad  m=\overline{1,\infty}, \quad   0 \leq k\leq m,  $$ $$  A \in {\cal F}_{k},  \quad  \alpha_i \geq 0, \quad i=\overline{1,n},$$
or 
\begin{eqnarray*}
 E^Q\{f_m+  g_m|{\cal F}_{k}\}\leq f_{k}+g_{k}, \quad m=\overline{1,\infty},  \quad  0 \leq k\leq m,  \quad Q \in M_n.
\end{eqnarray*} 
It means that  $g=\{g_m\}_{m=0}^{\infty}$ belongs to  $G_n.$  On the basis of the above proved,  for the maximal element $ \tilde g^0=\{ \tilde g_m^0\}_{m=0}^{\infty}$  in   the maximal chain   $\tilde G_0 \subseteq G_0$ the equalities 
\begin{eqnarray}\label{clobal} 
 E^Q \{f_m+ \tilde g_m^0|{\cal F}_{k}\}=f_{k}+ \tilde g_{k}^0, \quad  m=\overline{1,\infty},  \quad 1 \leq k\leq m,  \quad Q \in \tilde M_n,
\end{eqnarray}
\begin{eqnarray}\label{cchukal} 
 E^Q (f_m+ \tilde g_m^0)=f_0, \quad  m=\overline{1,\infty},   \quad Q \in \tilde M_n,
\end{eqnarray}
are valid.  From proved equality   $G_n=G_0,$  it follows that  $\tilde G_0$  is a maximal chain in  $G_n .$ 

As far as, $G_0$ coincides with $G_n$  we proved that  the maximal element $\tilde g^0$ in a certain maximal chain in  $G_n$  satisfies equalities
\begin{eqnarray}\label{my5}
E^{P_0}\{f_m+\tilde g_m^0|{\cal F}_{k}\} = f_{k}+\tilde g_{k}^0,\quad m=\overline{1,\infty},  \quad 1 \leq k \leq m,\quad P_0 \in  M_n,
\end{eqnarray}
\begin{eqnarray}\label{my6}
E^{P_0}(f_m+\tilde g_m^{0})=f_0 ,\quad  m=\overline{1,\infty},  \quad P_0 \in M_n.
\end{eqnarray}
Due to arbitrariness of the set of measure $P_1, \ldots, P_n,\ P_i \in M,$  the set $G$ contains nonzero element $\tilde g^0$ and in the maximal chain   $\tilde G \subseteq G$ containing element $\tilde g^0$ the  maximal element $g^0=\{g_m^0\}_{m=0}^{\infty}, g_0^0=0,$ coincides with $\tilde g^0.$ The last statement can be proved as in the case of maximal chain $\tilde G_0.$
So,
\begin{eqnarray}\label{my3}
E^{P_0}\{f_m+g_m^0|{\cal F}_{k}\} = f_{k}+g_{k}^0,\quad m=\overline{1,\infty},  \quad 1 \leq k \leq m,\quad P_0 \in M,
\end{eqnarray}
\begin{eqnarray}\label{my4}
E^{P_0}(f_m+g_m^{0})=f_0 ,\quad m=\overline{1,\infty},  \quad P_0 \in M.
\end{eqnarray}
Denote by  $\{\bar M_m, {\cal F}_m\}_{m=0}^{\infty}$ a martingale relative to the set of measures $M,$ where   $\bar M_m=f_m+g_m^{0}, \ m=\overline{1,\infty}.$
Due to  Theorem \ref{ct4} conditions, the supermartingale    $\{f_m, {\cal F}_m\}_{m=0}^\infty$
and non-decreasing process      $g^0=\{g_m^0\}_{m=0}^\infty$ are uniformly integrable relative to any measure from  $M,$  since for the non-decreasing process     $g^0=\{g_m^0\}_{m=0}^\infty$ there hold bounds    $E^Pg_{m}^0< T+f_0, \ m=\overline{1,\infty}, \ P \in M.$
Therefore, the martingale   $\{\bar M_m, {\cal F}_m\}_{m=0}^\infty$ is uniformly integrable  relative to any measure from   $M.$  
So, with probability 1 relative to every measure from   $M$ there exist limits  
\begin{eqnarray*}
\lim\limits_{m \to \infty} \bar M_{m}=M_{\infty}= f_{\infty}+ g_{\infty}^0, \quad \lim\limits_{m \to \infty}f_{m} = f_{\infty}, \quad \lim\limits_{m \to \infty}g_{m}^0 = g_{\infty}^0.
\end{eqnarray*}
Moreover, the representation   
\begin{eqnarray}\label{cp39}
\bar  M_m=E^P\{( f_{\infty}+ g_{\infty}^0)| {\cal F}_m\}, \quad m=\overline{1,\infty},   \quad P \in M,
\end{eqnarray}
 holds, where
  $\bar M=\{ \bar M_m\}_{m=0}^{\infty}$  does not depend on  $P \in M.$
 \end{proof}
In the next theorem we give the necessary and sufficient conditions of regularity of supermartingales.
\begin{thm}\label{reww1}
Let  a supermartingale    $\{f_{m},\ {\cal F}_m\}_{m=0}^{\infty}$ relative to a convex set of equivalent measures $M$ satisfy conditions (\ref{buti1}).
 The necessary and sufficient conditions for it to  be a regular one is the existence 
of adapted nonnegative random process $\bar g^0=\{\bar g_m^0\}_{m=0}^\infty, \ \bar g_0^0=0,$ $E^P\bar g_m^0<\infty,\ m=\overline{1, \infty}, \ P \in M, $ such that  equalities
\begin{eqnarray}\label{reww2}
E^P\{f_{m-1} -f_m|{\cal F}_{m-1}\}=E^P\{\bar g_m^0|{\cal F}_{m-1}\}, \quad m=\overline{1, \infty} , \quad P \in M,
\end{eqnarray} 
are valid.
\end{thm}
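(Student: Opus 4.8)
The plan is to prove the two implications separately, treating necessity as the short direction and sufficiency as the substantive one.

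For necessity, suppose $\{f_m,\mathcal{F}_m\}_{m=0}^\infty$ is regular. By Definition \ref{1000h} there is a single nonnegative process $\bar g^0$, not depending on the chosen family, satisfying (\ref{WOv2}) for every admissible family (\ref{Wov3}). Fix an arbitrary $P\in M$ and apply the regularity requirements to the trivial one-point family $s=1$, $\hat P_1=P$, for which condition (\ref{fifa1}) holds automatically with $l=L=1$. For the maximal element $g^1$ of the corresponding maximal chain, equality (\ref{Wov1}) with $k=m-1$ reads $E^{P}\{f_m+g_m^1|\mathcal{F}_{m-1}\}=f_{m-1}+g_{m-1}^1$, whence $f_{m-1}-E^{P}\{f_m|\mathcal{F}_{m-1}\}=E^{P}\{g_m^1-g_{m-1}^1|\mathcal{F}_{m-1}\}$; combining this with (\ref{WOv2}) for $s=1$ gives exactly (\ref{reww2}). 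Since $P$ was arbitrary, necessity follows.

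For sufficiency, suppose a nonnegative $\bar g^0$ satisfying (\ref{reww2}) exists. First I would introduce the cumulative process $\tilde g^0=\{\tilde g_m^0\}_{m=0}^\infty$, $\tilde g_m^0=\sum_{i=1}^m\bar g_i^0$, which is adapted, non-decreasing and starts at $0$. Rewriting (\ref{reww2}) as $E^P\{f_m+\bar g_m^0|\mathcal{F}_{m-1}\}=f_{m-1}$ and using the $\mathcal{F}_{m-1}$-measurability of $\tilde g_{m-1}^0$, a one-line computation shows $E^P\{f_m+\tilde g_m^0|\mathcal{F}_{m-1}\}=f_{m-1}+\tilde g_{m-1}^0$ for all $P\in M$, so $\{f_m+\tilde g_m^0,\mathcal{F}_m\}_{m=0}^\infty$ is a martingale relative to the whole family $M$. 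In particular $\tilde g^0\in G_s$ for every admissible family (\ref{Wov3}), and the martingale identity gives the uniform bound $E^P\tilde g_m^0=f_0-E^Pf_m\le f_0+T$, so condition (\ref{buti1}) makes Lemma \ref{l3} applicable to each $G_s$.

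The core of the argument is to verify that $\tilde g^0$ realizes all requirements of Definition \ref{1000h} for an arbitrary admissible family (\ref{Wov3}). I would fix such a family, let $\tilde G_s$ be a maximal chain in $G_s$ containing $\tilde g^0$, and invoke Lemma \ref{l3} to obtain its maximal element $g^s\succeq\tilde g^0$. The decisive step is to show $g^s=\tilde g^0$: since $f+g^s$ is a supermartingale, $E^{\hat P_i}(f_m+g_m^s)\le f_0$, while the martingale property gives $E^{\hat P_i}(f_m+\tilde g_m^0)=f_0$; with $g_m^s\ge\tilde g_m^0$ this forces $E^{\hat P_i}(g_m^s-\tilde g_m^0)=0$ and hence $g_m^s=\tilde g_m^0$, exactly as in the maximal-element argument of Theorem \ref{ct4}. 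With $g^s=\tilde g^0$ the martingale identity is precisely (\ref{Wov1}), and (\ref{WOv2}) reduces to the tautology $E^{\hat P_i}\{\bar g_m^0|\mathcal{F}_{m-1}\}=E^{\hat P_i}\{\bar g_m^0|\mathcal{F}_{m-1}\}$, so the same $\bar g^0$ serves as the process required in the definition. Finally, for the dominating variable, choosing $m_0$ with $\bar g_{m_0}^0$ not identically zero and setting $\varphi_{m_0}^s=\min_{1\le i\le s}E^{\hat P_i}\{\bar g_{m_0}^0|\mathcal{F}_{m_0-1}\}$, which is $\mathcal{F}_{m_0-1}$-measurable and, by equivalence of the measures and the bounds (\ref{fifa1}), positive on a set of positive $\hat P_1$-measure, identity (\ref{reww2}) yields $f_{m_0-1}-E^{\hat P_i}\{f_{m_0}|\mathcal{F}_{m_0-1}\}=E^{\hat P_i}\{\bar g_{m_0}^0|\mathcal{F}_{m_0-1}\}\ge\varphi_{m_0}^s$ for all $i$. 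The main obstacle I anticipate is the identification $g^s=\tilde g^0$: one must ensure the maximal chain can be taken through $\tilde g^0$ and that the comparison applies uniformly over the family $\hat M_s$, which is exactly what the martingale identity together with (\ref{buti1}) and Lemma \ref{l3} provides.
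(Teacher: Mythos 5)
Your proof is correct, and its sufficiency half rests on the same construction as the paper's: the process $\bar M_m=f_m+\sum_{i=1}^m\bar g_i^0$ is a martingale, which is essentially all the paper writes before declaring the theorem proven. You go further and actually verify the clauses of Definition \ref{1000h} --- membership of $\tilde g^0$ in each $G_s$, identification of $\tilde g^0$ with the maximal element via Lemma \ref{l3} and the $E^{\hat P_i}(g_m^s-\tilde g_m^0)=0$ argument borrowed from Theorem \ref{ct4}, the tautological form of (\ref{WOv2}), and the construction of $\varphi_{m_0}^s$ as a minimum of conditional expectations (which is legitimate: by equivalence of the measures the sets where $E^{\hat P_i}\{\bar g_{m_0}^0|{\cal F}_{m_0-1}\}$ vanishes coincide up to null sets, so the minimum stays positive on a set of positive measure). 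These details are genuinely absent from the paper, so your version is more complete. The necessity direction is where you diverge: the paper invokes the decomposition $f_m=\bar M_m-g_m$ supplied by Theorem \ref{ct4} and sets $\bar g_m^0=g_m-g_{m-1}$, whereas you read (\ref{reww2}) directly off Definition \ref{1000h} applied to the singleton family $s=1$, $\hat P_1=P$ (for which (\ref{fifa1}) holds with $l=L=1$), combining (\ref{Wov1}) with (\ref{WOv2}); this is lighter, avoids the machinery of Theorem \ref{ct4}, and correctly exploits the fact that the $\bar g^0$ of the definition does not depend on the chosen family. One caveat shared by both arguments: if $\bar g^0\equiv 0$ (i.e.\ $f$ is already a martingale), no $m_0$ with $\hat P_1(\varphi_{m_0}^s>0)>0$ can be produced, so the sufficiency direction degenerates; this is a defect of Definition \ref{1000h} rather than of your proof, and the paper ignores it entirely.
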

\begin{proof} 
{ \bf Necessity.} If   $\{f_{m},\ {\cal F}_m\}_{m=0}^{\infty}$ is  a regular supermartingale,  then there exist a martingale  $\{\bar M_{m},\ {\cal F}_m\}_{m=0}^{\infty}$ and a non-decreasing nonnegative random process  $\{g_{m},\ {\cal F}_m\}_{m=0}^{\infty},$ $ \ g_0=0,$ such that
\begin{eqnarray}\label{reww3}
f_m = \bar M_m - g_m, \quad m=\overline{1, \infty}.
\end{eqnarray}
As before, equalities (\ref{reww3}) yield inequalities $E^Pg_m \leq f_0+T, \  m=\overline{1, \infty},$
and  equalities
$$E^P\{f_{m-1} -f_m|{\cal F}_{m-1}\}=$$
\begin{eqnarray}\label{reww4}
=E^P\{ g_m - g_{m-1}|{\cal F}_{m-1}\}=E^P\{\bar g_m^0|{\cal F}_{m-1}\}, \quad m=\overline{1, \infty} , \quad P \in M,
\end{eqnarray}
where we introduced the denotation $\bar g_m^0=g_m - g_{m-1} \geq 0.$
It is evident that $E^P\bar g_m^0\leq 2( f_0+T).$

{\bf Sufficiency.}  If there exists an adapted nonnegative random process $\bar g^0=\{\bar g_m^0\}_{m=0}^\infty, \ \bar g_0^0=0,$  $ E^P\bar g_m^0<\infty, \ m=\overline{1, \infty}, $ such that the equalities (\ref{reww2}) are valid, then let us consider a  random process $\{\bar M_{m},\ {\cal F}_m\}_{m=0}^{\infty},$ where
\begin{eqnarray}\label{reww5}
\bar M_0=f_0, \quad \bar M_m=f_m+\sum\limits_{i=1}^m\bar g_m^0, \quad m=\overline{1, \infty}.
\end{eqnarray}
It is evident that $E^P|\bar M_m|< \infty$ and
$$E^P\{\bar M_{m-1} -  \bar M_m|{\cal F}_{m-1}\}=E^P\{f_{m-1} - f_m- \bar g_m^0|{\cal F}_{m-1}\}=0.$$
Theorem \ref{reww1} is proven.
\end{proof}
In the next Theorem we describe  the structure of non-decreasing process for a regular supermartingale.
\begin{thm}\label{kj1}   
 Let  a  supermartingale $\{f_m, {\cal F}_m\}_{m=0}^\infty$  relative to a convex set of equivalent measures $M$  satisfy conditions (\ref{buti1}). The necessary and sufficient conditions for it to be  regular one is the existence  of a non-decreasing  adapted  process $ g=\{ g_m\}_{m=0}^{\infty}, \ g_0=0, $ and   adapted  processes
$\bar \Psi^{j}=\{ \bar \Psi^{j}_m\}_{m=0}^{\infty}, $ $  \bar \Psi^{j}_0=0, \ j=\overline{1,n},$ such that between elements $g_m, \ m=\overline{1,\infty},$  of non-decreasing process  $g=\{g_m\}_{m=0}^{\infty}$     the relations
\begin{eqnarray}\label{kj2}
g_m -g_{m-1}=f_{m-1}- E^{P_j}\{f_m|{\cal F}_{m-1}\}+  \bar \Psi^{j}_m, \quad  m=\overline{1,\infty}, \quad  j=\overline{1,n},
\end{eqnarray}
 are valid for each  set of measures $P_1, \ldots,P_n \in M$, where  $E^{P_j}|\bar \Psi^{j}_m|< \infty, $  $E^{P_j}\{\bar \Psi^{j}_m|{\cal F}_{m-1}\}=0, \ j=\overline{1,n}, \ m=\overline{1,\infty}. $  
\end{thm}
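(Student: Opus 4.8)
The plan is to obtain this equivalence as a direct reformulation of Theorem \ref{reww1}, which already characterizes regularity through the existence of a single nonnegative adapted process $\bar g^0=\{\bar g_m^0\}_{m=0}^\infty$ satisfying $E^P\{f_{m-1}-f_m|{\cal F}_{m-1}\}=E^P\{\bar g_m^0|{\cal F}_{m-1}\}$ for every $P\in M$. The whole task reduces to translating back and forth between such a $\bar g^0$ and the pair consisting of a non-decreasing process $g$ and the family of martingale-difference correctors $\bar\Psi^j$. No new analytic machinery is needed; the argument is a bookkeeping identity together with the integrability bound (\ref{buti1}).

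For the necessity I would start from regularity, invoke Theorem \ref{reww1} to produce $\bar g^0$, and then set $g_m=\sum_{i=1}^m\bar g_i^0$, which is adapted, non-decreasing, has $g_0=0$, and whose increments are $g_m-g_{m-1}=\bar g_m^0\geq 0$. For an arbitrary finite collection $P_1,\ldots,P_n\in M$ I would define $\bar\Psi^j_m=\bar g_m^0-f_{m-1}+E^{P_j}\{f_m|{\cal F}_{m-1}\}$, so that (\ref{kj2}) holds by construction. Applying $E^{P_j}\{\,\cdot\,|{\cal F}_{m-1}\}$ and using the defining equality (\ref{reww2}) of Theorem \ref{reww1} gives $E^{P_j}\{\bar\Psi^j_m|{\cal F}_{m-1}\}=0$, while $E^{P_j}|\bar\Psi^j_m|<\infty$ follows since $f_{m-1}$ and $f_m$ are dominated by $\varphi$ through (\ref{buti1}) and $\bar g_m^0$ is integrable.

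For the sufficiency I would go in the reverse direction: given the non-decreasing $g$ and the correctors $\bar\Psi^j$ satisfying (\ref{kj2}), put $\bar g_m^0=g_m-g_{m-1}$. This is nonnegative because $g$ is non-decreasing, and integrable because the right-hand side of (\ref{kj2}) is integrable by (\ref{buti1}) together with $E^{P_j}|\bar\Psi^j_m|<\infty$. Taking $E^{P_j}\{\,\cdot\,|{\cal F}_{m-1}\}$ of (\ref{kj2}) and discarding the corrector via $E^{P_j}\{\bar\Psi^j_m|{\cal F}_{m-1}\}=0$ yields $E^{P_j}\{\bar g_m^0|{\cal F}_{m-1}\}=E^{P_j}\{f_{m-1}-f_m|{\cal F}_{m-1}\}$; since every $P\in M$ belongs to some admissible finite set, this equality holds for all $P\in M$, and Theorem \ref{reww1} then delivers regularity.

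The step to watch most carefully is that the non-decreasing process $g$ must be one and the same for every finite set of measures, only the correctors $\bar\Psi^j$ being allowed to depend on the chosen $P_j$; this is exactly what guarantees that $\bar g_m^0=g_m-g_{m-1}$ is the single measure-independent process demanded by Theorem \ref{reww1}. Establishing nonnegativity and integrability of this $\bar g_m^0$ from the monotonicity of $g$ and the bound (\ref{buti1}) is the only genuinely substantive verification; everything else is the elementary identity (\ref{kj2}) read once in each direction.
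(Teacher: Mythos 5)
Your proof is correct and follows essentially the same route as the paper: both directions reduce to the bookkeeping identity (\ref{kj2}) read around the Doob-type decomposition, with $\bar \Psi^{j}_m$ being exactly the martingale-difference part of the increment $g_m-g_{m-1}$ under $P_j$. The only cosmetic difference is that you channel both implications through Theorem \ref{reww1}, whereas the paper obtains necessity directly from the decomposition of Theorem \ref{ct4} (setting $\bar \Psi^{j}_m=g_m-g_{m-1}-E^{P_j}\{g_m-g_{m-1}|{\cal F}_{m-1}\}$) and proves sufficiency by extending the martingale property of $f+g$ from the measures $P_j$ to all of $M_n$ via the conditional-expectation mixing formula --- computations that coincide with yours.
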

\begin{proof} {\bf The necessity.} 
Let $\{f_m, {\cal F}_m\}_{m=0}^\infty$ be a regular supermartingale. Then for it
 the representation 
\begin{eqnarray}\label{kkj3}
 f_m+g_m=M_m, \quad  m=\overline{1,\infty}, \quad  j=\overline{1,n},
\end{eqnarray}
is valid,  where $ \{g_m\}_{m=0}^\infty, \ g_0=0,$ is a non-decreasing adapted process, \\ $\{M_m, {\cal F}_m\}_{m=0}^\infty$ is a martingale relative to the set of measures $M.$
 For any finite set of measures $P_1, \ldots,P_n \in M,$ we have
\begin{eqnarray}\label{kj3}
 E^{P_j}\{f_m+g_m|{\cal F}_{m-1}\}=f_{m-1}+g_{m-1}, \quad  m=\overline{1,\infty}, \quad  j=\overline{1,n}.
\end{eqnarray}
Hence, we have
\begin{eqnarray}\label{kj4}
 E^{P_j}\{g_m- g_{m-1}|{\cal F}_{m-1}\}=f_{m-1}- E^{P_j}\{f_m|{\cal F}_{m-1}\} , \  \ m=\overline{1,\infty}, \ \  j=\overline{1,n}.
\end{eqnarray}
Let us put
\begin{eqnarray}\label{kj5}
 \bar \Psi^{j}_m=g_m - g_{m-1} - E^{P_j}\{g_m - g_{m-1}|{\cal F}_{m-1}\}.
\end{eqnarray}
The assumptions of  Theorem \ref{kj1} and  Lemma \ref{l3},  the representation (\ref{kj5}) imply 
$E^{P_j}|\bar \Psi^{j}_m|< 4(f_0+T), $  $E^{P_j}\{\bar \Psi^{j}_m|{\cal F}_{m-1}\}=0, \ j=\overline{1,n}, \ m=\overline{1,\infty}. $  
This proves  the necessity.

 {\bf The sufficiency.}  For any set of measures $P_1, \ldots,P_n \in M$ the representation (\ref{kj2}) for a non-decreasing adapted  process  $g=\{g_m\}_{m=0}^{\infty}, \ g_0=0,$   is valid. Hence, we obtain (\ref{kj4})    and (\ref{kj3}). The  equalities (\ref{kj4}), (\ref{kj3}) and the formula
\begin{eqnarray*}\label{nsal}
 E^{P}\{f_{m}+g_ m|{\cal F}_{m-1}\}=\frac{\sum\limits_{i=1}^{n}\alpha_i E^{P_1}\{\varphi_i|{\cal F}_{m-1}\}E^{P_i}\{f_{m}+g_m|{\cal F}_{m-1}\}}{\sum\limits_{i=1}^{n}\alpha_i E^{P_1}\{\varphi_i|{\cal F}_{m-1}\}}, \quad P \in M_n,
\end{eqnarray*}
$$\varphi_i=\frac{dP_i}{dP_1}, \quad i=\overline{1,n}, $$
imply
$$ E^{P}\{f_m+g_m|{\cal F}_{m-1}\}=f_{m-1}+g_{m-1}, \quad  m=\overline{1,\infty}, \quad  P \in M_n.$$ 
Arbitrariness of the set of measures $P_1, \ldots,P_n \in M$  and  fulfilment of the condition (\ref{buti1}) for the supermartingale $\{f_m, {\cal F}_m\}_{m=0}^\infty$  imply its   regularity.
\end{proof}

Further, we consider a class of supermartingales $ F$ satisfying conditions

\begin{eqnarray*}\label{f1}
\sup\limits_{P\in M}E^P|f_m|< \infty, \quad m=\overline{0,\infty}. 
\end{eqnarray*}
\begin{defin} A  supermartingale $f=\{f_{m},\ {\cal F}_m\}_{m=0}^{\infty}  \in F$  is said to be    local  regular one if there exists an increasing  sequence of  nonrandom stopping times $\tau_{k_s} = k_s, \  k_s < \infty, \  s=\overline{1, \infty}, \  \lim\limits_{s \to \infty}k_s=\infty,  $ such that the stopped process  $f^{\tau_{k_s}}=\{f_{m\wedge \tau_{k_s}},\ {\cal F}_m\}_{m=0}^{\infty}$
 is a regular supermartingale for every $\tau_{k_s} = k_s, \  k_s < \infty, \  s=\overline{1, \infty}.$
\end{defin}
\begin{thm}\label{hf1}
Let $\{f_m, {\cal F}_m\}_{m=0}^\infty$ be a supermartingale relative to  a convex set of equivalent measures $M,$ belonging to the class $F,$
for which the representation
\begin{eqnarray}\label{jk1}
 f_m=M_m -g_m^0,  \quad m=\overline{0,\infty},
\end{eqnarray} 
 is valid, where $\{M_m\}_{m=0}^\infty$ is a martingale  relative to a convex set of equivalent measures $M$ such that
$$E^P|M_m|< \infty, \quad m=\overline{0,\infty}, \quad P \in M,  $$
 $g^0=\{g_m^0\}_{m=0}^\infty, \  g_0^0=0,$ is a non-decreasing adapted process. Then $\{f_m, {\cal F}_m\}_{m=0}^\infty$ is a  local regular supermartingale.
\end{thm}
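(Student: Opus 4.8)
The plan is to verify local regularity directly from its definition by choosing the simplest localizing sequence and reducing each stopped process to the criterion of Theorem \ref{reww1}, which I will use in place of the cumbersome Definition \ref{1000h}. First I would take the nonrandom stopping times $\tau_{k_s}=k_s=s,\ s=\overline{1,\infty},$ so that $k_s\to\infty,$ and for each fixed natural number $N=k_s$ consider the stopped process $f^{\tau_{k_s}}=\{f_{m\wedge N}\}_{m=0}^\infty.$ Using the hypothesized representation $f_m=M_m-g_m^0,$ the stopped process inherits the decomposition $f_{m\wedge N}=M_{m\wedge N}-g_{m\wedge N}^0,$ where $\{M_{m\wedge N}\}_{m=0}^\infty$ is again a martingale relative to every $P\in M$ and $\{g_{m\wedge N}^0\}_{m=0}^\infty$ is non-decreasing with $g_{0\wedge N}^0=0.$ This immediately shows that $f^{\tau_{k_s}}$ is a supermartingale relative to $M$: for $k\leq m$ one has $E^P\{f_{m\wedge N}|{\cal F}_k\}=M_{k\wedge N}-E^P\{g_{m\wedge N}^0|{\cal F}_k\}\leq M_{k\wedge N}-g_{k\wedge N}^0=f_{k\wedge N}.$

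Next I would check that the stopped process satisfies condition (\ref{buti1}), so that Theorem \ref{reww1} applies. Since $f_{m\wedge N}$ takes values only among $f_0,\ldots,f_N,$ we have $|f_{m\wedge N}|\leq\varphi:=\max\limits_{0\leq k\leq N}|f_k|$ for all $m,$ and because the supermartingale belongs to the class $F,$ $E^Q\varphi\leq\sum\limits_{k=0}^N\sup\limits_{P\in M}E^P|f_k|=:T<\infty,$ a bound independent of $Q\in M.$ Thus $f^{\tau_{k_s}}$ meets the hypotheses of Theorem \ref{reww1}.

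The core of the argument is to exhibit the nonnegative adapted process required by Theorem \ref{reww1}. I would set $\bar g_m^0:=g_{m\wedge N}^0-g_{(m-1)\wedge N}^0\geq 0,$ which is ${\cal F}_m$ measurable and satisfies $E^P\bar g_m^0\leq E^P g_N^0=E^P M_N-E^P f_N<\infty.$ The martingale property of $\{M_m\}$ relative to each $P\in M$ gives $E^P\{M_{m\wedge N}|{\cal F}_{m-1}\}=M_{(m-1)\wedge N}$ (checked separately in the cases $m\leq N$ and $m>N$), so the martingale increment cancels in the conditional expectation and one obtains $E^P\{f_{(m-1)\wedge N}-f_{m\wedge N}|{\cal F}_{m-1}\}=E^P\{\bar g_m^0|{\cal F}_{m-1}\}$ for all $P\in M.$ By Theorem \ref{reww1} this is exactly the necessary and sufficient condition for $f^{\tau_{k_s}}$ to be regular.

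Since the above holds for every $N=k_s$ of the increasing sequence $k_s=s\to\infty,$ the original supermartingale is local regular by definition, which completes the proof. I do not expect a genuine obstacle: the whole argument rests on the fact that stopping at a finite nonrandom time converts the global decomposition into one with integrable, $P$-independent increments, and the only point requiring a little care is the verification of (\ref{buti1}), namely that the envelope $\varphi$ is integrable uniformly in $Q,$ which is supplied precisely by membership in the class $F.$
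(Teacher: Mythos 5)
Your argument is correct, and it reaches the conclusion by a genuinely different route than the paper. The paper's proof does not invoke Theorem \ref{reww1} at all: starting from $E^{P}\{f_{m\wedge\tau_s}+g^0_{m\wedge\tau_s}|{\cal F}_{m-1}\}=f_{(m-1)\wedge\tau_s}+g^0_{(m-1)\wedge\tau_s}$, it observes that the stopped process $g^{0,\tau_s}=\{g^0_{m\wedge\tau_s}\}_{m=0}^\infty$ is a nonzero element of the set $G$ attached to the stopped supermartingale, places it in a maximal chain, uses Lemma \ref{l3} (with the same envelope bound you construct, the paper taking $\varphi=\sum_{i=0}^{s}|f_i|$ where you take the max) to extract a maximal element $g$, and then squeezes
$f_0=E^{P}(f_{m\wedge\tau_s}+g^0_{m\wedge\tau_s})\leq E^{P}(f_{m\wedge\tau_s}+g_m)\leq f_0$
to force $g=g^{0,\tau_s}$, thereby exhibiting the maximal element and the martingale equalities that Definition \ref{1000h} and Theorem \ref{ct4} are built around. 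You instead verify the stopped process against the criterion of Theorem \ref{reww1}, producing the explicit increment $\bar g^0_m=g^0_{m\wedge N}-g^0_{(m-1)\wedge N}$, which is nonnegative, ${\cal F}_m$-measurable, independent of $P$, and integrable via $E^{P}g^0_N=E^{P}M_N-E^{P}f_N<\infty$. Since Theorem \ref{reww1} is stated and proved before Theorem \ref{hf1}, there is no circularity, and your route is shorter: it buys you a proof that never touches chains or maximal elements, at the cost of leaning on Theorem \ref{reww1} as a black box (including its implicit treatment of the $\varphi_{m_0}^s$ clause of Definition \ref{1000h}, a point the paper itself also glosses over). The paper's route is longer but self-contained at this step and additionally identifies the maximal element of the chain explicitly, information it reuses in the surrounding structural theorems. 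Both verifications of condition (\ref{buti1}) for the stopped process are essentially identical and both correctly use membership in $F$ to get a $Q$-uniform bound.
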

\begin{proof}
The representation (\ref{jk1})  and assumptions of  Theorem \ref{hf1} imply inequalities
$ E^Pg_m^0<\infty, \ m=\overline{1,\infty}, \ P \in M.$
For any  measure $P \in M,$ therefore we have
\begin{eqnarray}\label{jk2}
 E^{P}\{f_m+g_m^0|{\cal F}_{m-1}\}=M_{m-1}=f_{m-1}+g_{m-1}^0, \quad  m=\overline{1,\infty}.
\end{eqnarray}
Consider a sequence of stopping times $\tau_s=s, \ s=\overline{1, \infty}.$
Equalities (\ref{jk2}) yield
\begin{eqnarray}\label{mif1}
 E^{P}\{f_{m\wedge\tau_s} +g_{m\wedge\tau_s}^0|{\cal F}_{m-1}\}=M_{(m-1)\wedge\tau_s}=
f_{(m-1)\wedge\tau_s}+g_{(m-1)\wedge\tau_s}^0,
\end{eqnarray}
$$  m=\overline{1,\infty}, \quad P \in M. $$
For the stopped supermartingale   $\{f_{m\wedge\tau_s}, {\cal F}_m\}_{m=0}^\infty,$
the set $G$ of adapted non-decreasing processes $g=\{g_m\}_{m=0}^\infty, \  g_0=0,$  such that   $\{f_{m\wedge\tau_s}+g_m, {\cal F}_m\}_{m=0}^\infty$
 is a supermartingale  relative to a convex set of equivalent measures $M$
contains nonzero element $g^{0, \tau_s}=$  $\{g_{m\wedge\tau_s}^0\}_{m=0}^\infty,$ $  g_0^0=0.$
Consider a maximal chain $\tilde G \subseteq G$  containing   this element  and let $g=\{g_m\}_{m=0}^\infty, \  g_0=0,$ be a maximal element in $\tilde G$ which exists, since the stopped supermartingale   $\{f_{m\wedge\tau_s}, {\cal F}_m\}_{m=0}^\infty$ is such that $|f_{m\wedge\tau_s}| \leq \sum\limits_{i=0}^s|f_i|=\varphi, \  m=\overline{0,\infty}, \  E^P\varphi \leq \sum\limits_{i=0}^s\sup\limits_{P \in M}E^P|f_i|=T< \infty.$
 Then 
\begin{eqnarray}\label{jk3}
 E^{P}\{f_{m\wedge\tau_s}+g_m|{\cal F}_{m-1}\}\leq f_{(m-1)\wedge\tau_s}+g_{m-1}, \quad  m=\overline{1,\infty}.
\end{eqnarray}
Equalities (\ref{mif1}) and inequality $ g^{0, \tau_s} \preceq g$ imply
\begin{eqnarray}\label{jk4}
f_0 =  E^{P}\{f_{m\wedge\tau_s}+g_{m\wedge\tau_s}^0\}\leq E^{P}\{f_{m\wedge\tau_s}+g_m\} \leq f_0, \  m=\overline{1,\infty}, \ P \in M.
\end{eqnarray}
The last  inequalities yield 
\begin{eqnarray}\label{jk4}
 E^{P}\{f_{m\wedge\tau_s}+g_m\} = f_0, \quad  m=\overline{1,\infty}, \quad  P \in M.
\end{eqnarray}
 The equalities (\ref{jk4}), inequality  $  g^{0, \tau_s} \preceq g,$ and equalities
\begin{eqnarray}\label{jk5}
 E^{P}\{f_{m\wedge\tau_s}+g_{m\wedge\tau_s}^0\}=M_{0}=f_{0}, \quad  m=\overline{1,\infty}, \quad  P \in M,
\end{eqnarray}
imply that $ g^{0, \tau_s}=g.$   

So, we proved that the stopped supermartingale
 $\{f_{m\wedge\tau_s}, {\cal F}_m\}_{m=0}^\infty$ is regular one for every stopping time $\tau_s, \  s=\overline{1, \infty},$  converging to  the infinity,
as $ s \to \infty.$ This proves Theorem \ref{hf1}.
\end{proof}

\begin{thm}\label{mars1}
Let a supermartingale $\{ f_m, {\cal F}_m\}_{m=0}^\infty $ relative to  a convex set of equivalent measures $M$ on  a measurable space $\{\Omega, {\cal F}\}$ belongs to a class $F$   and there exists a nonnegative adapted  random process $\{\bar  g_m^0\}_{m=1}^\infty, \ E^P \bar g_m^0< \infty, \  m=\overline{1, \infty}, \  P \in M,$ such that 
\begin{eqnarray}\label{mars2}
f_{m-1} - E^P\{f_m|{\cal F}_{m-1}\}=E^P\{\bar g_m^0|{\cal F}_{m-1}\}, \quad m=\overline{1, \infty}, \quad P\in M,
\end{eqnarray}
then $\{ f_m, {\cal F}_m\}_{m=0}^\infty $ is a local regular supermartingale.
\end{thm}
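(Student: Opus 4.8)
The plan is to reduce local regularity to the one-step compensator characterization of regularity established in Theorem \ref{reww1}, applied to each stopped process. First I would take the nonrandom stopping times $\tau_s = s$, $s = \overline{1, \infty}$, which form an increasing sequence tending to infinity, and consider the stopped supermartingales $f^{\tau_s} = \{f_{m \wedge \tau_s}, {\cal F}_m\}_{m=0}^\infty$; each of these is again a supermartingale relative to $M$. As in the proof of Theorem \ref{hf1}, every such stopped process satisfies the boundedness condition (\ref{buti1}): indeed $|f_{m \wedge \tau_s}| \leq \sum_{i=0}^s |f_i| = \varphi$ for all $m$, and membership of $\{f_m\}$ in the class $F$ yields $E^P \varphi \leq \sum_{i=0}^s \sup_{P \in M} E^P |f_i| = T < \infty$ uniformly in $P \in M$. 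This is precisely the hypothesis required to invoke Theorem \ref{reww1}.

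Next I would build the compensator of the stopped process by truncating the given process $\bar g^0$. Set
$$\tilde g_0^{0,s} = 0, \quad \tilde g_m^{0,s} = \bar g_m^0\, \chi_{[1,s]}(m), \quad m = \overline{1, \infty},$$
which is nonnegative, adapted and integrable against every $P \in M$. The essential point is that $\tilde g^{0,s}$ plays the role of $\bar g^0$ in (\ref{reww2}) for $f^{\tau_s}$. For $m \leq s$ one has $f_{(m-1)\wedge\tau_s} - f_{m\wedge\tau_s} = f_{m-1} - f_m$, so hypothesis (\ref{mars2}) gives
$$E^P\{f_{(m-1)\wedge\tau_s} - f_{m\wedge\tau_s}\,|\,{\cal F}_{m-1}\} = E^P\{\bar g_m^0\,|\,{\cal F}_{m-1}\} = E^P\{\tilde g_m^{0,s}\,|\,{\cal F}_{m-1}\};$$
for $m > s$ both stopped values equal $f_s$, so the left-hand side vanishes and matches $\tilde g_m^{0,s} = 0$. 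Hence (\ref{reww2}) holds for $f^{\tau_s}$ with compensator $\tilde g^{0,s}$.

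Applying the sufficiency part of Theorem \ref{reww1} to $f^{\tau_s}$ then shows that each stopped process is a regular supermartingale relative to $M$. Since this holds for every $s$ and $\tau_s = s \to \infty$ as $s \to \infty$, the definition of a local regular supermartingale is satisfied, which proves Theorem \ref{mars1}.

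I expect no genuine obstacle: the argument simply splices the nonrandom stopping-time device from Theorem \ref{hf1} with the compensator criterion of Theorem \ref{reww1}. The only place demanding a little care is the boundary between the indices $m \leq s$ and $m > s$, where one must check that the vanishing of the increments of the stopped process beyond time $s$ is correctly mirrored by the vanishing of the truncated compensator $\tilde g_m^{0,s}$; but this is immediate from the definitions.
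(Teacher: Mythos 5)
Your proof is correct, but it follows a genuinely different route from the paper's. The paper's proof is a two-line reduction: it sets $\bar M_0=f_0$, $\bar M_m=f_m+\sum_{i=1}^m\bar g_i^0$, observes that hypothesis (\ref{mars2}) makes $\{\bar M_m,{\cal F}_m\}_{m=0}^\infty$ a martingale relative to every $P\in M$, writes $f_m=\bar M_m-g_m$ with the non-decreasing process $g_m=\sum_{i=1}^m\bar g_i^0$, and then invokes Theorem \ref{hf1}, which already contains the stopping-time device $\tau_s=s$ and the maximal-chain verification that each stopped process is regular. You instead bypass Theorem \ref{hf1} entirely: you stop first, check the boundedness condition (\ref{buti1}) for $f^{\tau_s}$, exhibit the truncated compensator $\tilde g^{0,s}_m=\bar g^0_m\chi_{[1,s]}(m)$ satisfying (\ref{reww2}) for the stopped process, and appeal to the sufficiency half of Theorem \ref{reww1}. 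Both arguments are sound given the results already established in the paper; yours makes the one-step compensator criterion do the work and localizes the integrability checks to each stopped process, while the paper's is shorter because the stopping and the maximal-chain bookkeeping are already packaged inside Theorem \ref{hf1}. The boundary between $m\le s$ and $m>s$ that you flag is indeed the only delicate point, and you handle it correctly.
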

\begin{proof} To prove Theorem \ref{mars1} let us consider a random process
$$\bar M_m=f_m+\sum\limits_{i=1}^m \bar g_i^0, \quad m=\overline{1, \infty}, \quad P\in M, \quad  f_0=\bar M_0.$$
 It is evident that
$E^P|\bar M_m | < \infty, \ m=\overline{1,\infty}, \ P \in M,$ and $E^P\{\bar M_m|{\cal F}_{m-1}\}=\bar M_{m-1}, \ m=\overline{1, \infty}, \ P\in M.$
Therefore, for $f_m$ the representation 
\begin{eqnarray}\label{mars3}
f_{m} =\bar M_m - g_{m}, \quad m=\overline{0, \infty}, 
\end{eqnarray}
is valid, where  $g_m=\sum\limits_{i=1}^m\bar g_i^0.$
Supermartingale (\ref{mars3}) satisfies conditions of the Theorem 6.
The Theorem  \ref{mars1} is proved.
\end{proof}

Below we describe local regular supermartingales. For this we need some auxiliary statements. Denote by $N_0=[1, 2, \ldots, \infty)$ the set of positive natural numbers.

On a measurable space $\{\Omega, {\cal F}\}$ let us  consider two sub $\sigma$-algebras   $G_n \subset G_N$ of $\sigma$-algebra ${\cal F}.$ 
We suppose that for $N >n$  $\sigma$-algebra  $G_N$ is generated by  sets $E_s,$ $s=\overline{1,\infty},$  satisfying conditions $E_j \cap E_m=\emptyset, \  j \neq m, $ $ \bigcup\limits_{s=1}^\infty E_s=\Omega.$
We assume that
 $G_n$ is generated by  sets $F_j,$ $j=\overline{1,\infty},$ satisfying conditions $F_j \cap F_m=\emptyset, \  j\neq m, \ \bigcup\limits_{j=1}^\infty F_j=\Omega,$ 
 and such that  $F_j=\bigcup\limits_{s \in I_j} E_s, \ j=\overline{1,\infty},$ where $I_j $ are subsets of the set $N_0, $  $I_r \cap I_l =\emptyset, \ r\neq l,$ $\bigcup\limits_{j=1}^\infty I_j=N_0.$

\begin{lemma}\label{mq1} 
Let  $P_1, \ldots, P_k$ be a set of  equivalent measures on a measurable space $\{\Omega, {\cal F}\}.$  If $P_1(E_s)>0, \ s=\overline{1,\infty},$ then the formulas 
\begin{eqnarray}\label{mn1}
\frac{E^{P_l}\{\frac{dP_i}{dP_l}| G_N\}}{ E^{P_l}\{\frac{dP_i}{dP_l}| G_n\} }=
\sum\limits_{j=1}^\infty \sum\limits_{s \in I_j} \frac{P_i(E_s) P_l(F_j)}{P_i(F_j)P_l(E_s)} \chi_{E_s}(\omega), \quad l=\overline{1, k},
\end{eqnarray}
are valid.
\end{lemma}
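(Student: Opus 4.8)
The plan is to compute the conditional expectations in the numerator and the denominator of (\ref{mn1}) separately, exploiting the fact that both $G_N$ and $G_n$ are generated by countable partitions into atoms, and then to form their quotient atom by atom.

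First I would record that the hypotheses make every factor on the right-hand side of (\ref{mn1}) well defined. Since $P_1(E_s)>0$ for all $s$ and the measures $P_1,\ldots,P_k$ are equivalent, we have $P_l(E_s)>0$ and $P_i(E_s)>0$ for all indices; summing over $s\in I_j$ gives $P_l(F_j)>0$ and $P_i(F_j)>0$.

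Next, writing $\rho=\frac{dP_i}{dP_l}$, I claim
$$E^{P_l}\{\rho|G_N\}=\sum_s\frac{P_i(E_s)}{P_l(E_s)}\chi_{E_s},\qquad E^{P_l}\{\rho|G_n\}=\sum_j\frac{P_i(F_j)}{P_l(F_j)}\chi_{F_j}.$$
To verify the first identity it suffices to check the defining property of conditional expectation: the right-hand side is $G_N$-measurable, since it is constant on each atom $E_s$, and for any $G_N$-measurable set $A=\bigcup_{s\in J}E_s$ one has $\int_A\sum_s\frac{P_i(E_s)}{P_l(E_s)}\chi_{E_s}\,dP_l=\sum_{s\in J}P_i(E_s)=P_i(A)=\int_A\rho\,dP_l$, the last equality being the definition of $\rho$. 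The countable additivity of the measures absorbs any convergence issue, so no passage to the limit is needed. The second identity follows by the same argument applied to the coarser partition $\{F_j\}$.

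Finally I would take the quotient. Using $F_j=\bigcup_{s\in I_j}E_s$ and disjointness, $\chi_{F_j}=\sum_{s\in I_j}\chi_{E_s}$, so for $\omega\in E_s$ with $s\in I_j$ the numerator equals $P_i(E_s)/P_l(E_s)$ while the denominator equals $P_i(F_j)/P_l(F_j)$. Dividing the two step functions on each atom yields exactly (\ref{mn1}). There is no genuine obstacle here: the statement is in essence a bookkeeping identity for conditional expectations on atomic $\sigma$-algebras, and the only point needing attention—the validity of the atomic formula for a \emph{countable} partition—is handled directly by the defining property of conditional expectation as above.
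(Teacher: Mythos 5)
Your proposal is correct and follows essentially the same route as the paper: both compute $E^{P_l}\{\tfrac{dP_i}{dP_l}|G_N\}$ and $E^{P_l}\{\tfrac{dP_i}{dP_l}|G_n\}$ as step functions on the atoms, use $\chi_{F_j}=\sum_{s\in I_j}\chi_{E_s}$, and divide atom by atom. Your explicit verification of the defining property of conditional expectation is a slightly fuller justification of a step the paper calls evident, but it is not a different argument.
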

\begin{proof}
It is evident that
\begin{eqnarray}\label{mn2}
E^{P_l}\left\{\frac{dP_i}{dP_l}| G_N\right\}=\sum\limits_{s=1}^\infty\frac{1}{P_l(E_s)}\int\limits_{E_s}\frac{dP_i}{dP_l}dP_l \chi_{E_s}(\omega)=\sum\limits_{s=1}^\infty\frac{P_i(E_s)}{P_l(E_s)}\chi_{E_s}(\omega),
\end{eqnarray}
\begin{eqnarray}\label{mn3}
E^{P_l}\left\{\frac{dP_i}{dP_l}| G_n\right\}=\sum\limits_{j=1}^\infty\frac{P_i(F_j)}{P_l(F_j)}\chi_{F_j}(\omega).
\end{eqnarray}
Since $\chi_{F_j}(\omega)=\sum\limits_{s \in I_j}\chi_{E_s}(\omega)$ we have
\begin{eqnarray}\label{mn4}
E^{P_l}\left\{\frac{dP_i}{dP_l}| G_n\right\}=\sum\limits_{j=1}^\infty \sum\limits _{s \in I_j}\frac{P_i(F_j)}{P_l(F_j)}\chi_{E_s}(\omega).
\end{eqnarray}
Therefore,
\begin{eqnarray*}\label{mn5}
\frac{E^{P_l}\left\{\frac{dP_i}{dP_l}| G_N\right\}}{ E^{P_l}\left\{\frac{dP_i}{dP_l}| G_n\right\}}=\frac{\sum\limits_{s=1}^\infty\frac{P_i(E_s)}{P_l(E_s)}\chi_{E_s}(\omega)}{\sum\limits_{j=1}^\infty \sum\limits _{s \in I_j}\frac{P_i(F_j)}{P_l(F_j)}\chi_{E_s}(\omega)}=\frac{\sum\limits_{j=1}^\infty\sum\limits _{s \in I_j}\frac{P_i(E_s)}{P_l(E_s)}\chi_{E_s}(\omega)}{\sum\limits_{j=1}^\infty \sum\limits _{s \in I_j}\frac{P_i(F_j)}{P_l(F_j)}\chi_{E_s}(\omega)}=
\end{eqnarray*}
\begin{eqnarray}\label{mn6}
\sum\limits_{j=1}^\infty \sum\limits _{s \in I_j}\frac{P_i(E_s)}{P_i(F_j)}\frac{P_l(F_j)}{P_l(E_s)}\chi_{E_s}(\omega).
\end{eqnarray}
The Lemma \ref{mq1} is proved.
\end{proof}
 
\begin{lemma}\label{mq7}  Let a set of equivalent measures $P_1, \ldots, P_k$ on $\{\Omega, {\cal F}\} $ are such that for a certain  $1 \leq i_0 \leq k$ 
 conditional measures $ \frac{P_i(A_s)}{P_i(F_j)},  A_s  \subseteq F_j , j=\overline{1, \infty}, \ i=\overline{1,k}, $ satisfy conditions
\begin{eqnarray}\label{mn8}
\frac{P_i(A_s)}{P_i(F_j)} \leq \frac{P_{i_0}(A_s)}{P_{i_0}(F_j)}, \quad  A_s  \subseteq F_j, \quad \bigcup\limits_{s \in I_j}A_s= F_j, \quad j=\overline{1, \infty},  \quad  i=\overline{1,k}.
\end{eqnarray}
Then the inequalities
\begin{eqnarray}\label{mn9}
\frac{E^{P_l}\{\frac{dP_i}{dP_l}| G_N\}}{ E^{P_l}\{\frac{dP_i}{dP_l}| G_n\} } \leq
\frac{E^{P_l}\{\frac{dP_{i_0}}{dP_l}| G_N\}}{ E^{P_l}\{\frac{dP_{i_0}}{dP_l}| G_n\} }, \quad i=\overline{1,k}, \quad  l=\overline{1,k},
\end{eqnarray}
are valid.
\end{lemma}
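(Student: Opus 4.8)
The plan is to leverage Lemma~\ref{mq1}, which already gives an explicit formula for the ratio appearing on both sides of the desired inequality~(\ref{mn9}). Indeed, by~(\ref{mn1}), for each index $i$ we have
\begin{eqnarray*}
\frac{E^{P_l}\{\frac{dP_i}{dP_l}| G_N\}}{ E^{P_l}\{\frac{dP_i}{dP_l}| G_n\} }=
\sum\limits_{j=1}^\infty \sum\limits_{s \in I_j} \frac{P_i(E_s)}{P_i(F_j)}\,\frac{P_l(F_j)}{P_l(E_s)} \, \chi_{E_s}(\omega),
\end{eqnarray*}
and the same formula holds with $i$ replaced by $i_0$. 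So the whole claim reduces to comparing these two sums term by term. The key observation is that the factor $\frac{P_l(F_j)}{P_l(E_s)}$ is common to both expressions and is nonnegative, while the hypothesis~(\ref{mn8}) is precisely the statement that the conditional probabilities satisfy $\frac{P_i(E_s)}{P_i(F_j)} \leq \frac{P_{i_0}(E_s)}{P_{i_0}(F_j)}$ once we identify the sets $A_s$ with the generating sets $E_s$ (for $s \in I_j$ each $E_s$ is a subset of $F_j$ with $\bigcup_{s \in I_j}E_s = F_j$).

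The first step I would take is to fix $\omega \in \Omega$ and note that, since the $E_s$ are disjoint and cover $\Omega$, exactly one indicator $\chi_{E_s}(\omega)$ is nonzero. Hence evaluating either sum at $\omega$ picks out a single term: if $\omega \in E_{s}$ with $s \in I_j$, the left ratio equals $\frac{P_i(E_{s})}{P_i(F_j)}\frac{P_l(F_j)}{P_l(E_{s})}$ and the right ratio equals $\frac{P_{i_0}(E_{s})}{P_{i_0}(F_j)}\frac{P_l(F_j)}{P_l(E_{s})}$. The comparison is therefore entirely pointwise, and no interaction between different $(j,s)$ survives.

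The second step is to apply the hypothesis directly: taking $A_s = E_s$ in~(\ref{mn8}) gives $\frac{P_i(E_s)}{P_i(F_j)} \leq \frac{P_{i_0}(E_s)}{P_{i_0}(F_j)}$ for every $s \in I_j$ and every $j$. Multiplying both sides by the common nonnegative factor $\frac{P_l(F_j)}{P_l(E_s)}$ (which is strictly positive because the $P_l$ are equivalent measures with $P_1(E_s)>0$, hence $P_l(E_s)>0$) preserves the inequality, yielding the pointwise domination of the two selected terms. Summing the indicators back (that is, collecting over all $\omega$) gives~(\ref{mn9}) as an inequality between the two functions, valid for each $l=\overline{1,k}$.

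I do not anticipate a genuine obstacle here, since the hard analytic content has already been isolated in Lemma~\ref{mq1}; what remains is essentially bookkeeping. The only point that requires a moment's care is the identification of the abstract sets $A_s$ in the hypothesis with the concrete generators $E_s$, together with checking that the conditions $A_s \subseteq F_j$ and $\bigcup_{s \in I_j}A_s = F_j$ are met by the $E_s$ for $s \in I_j$ — this is guaranteed by the standing assumptions on the $\sigma$-algebras $G_n \subset G_N$ stated just before Lemma~\ref{mq1}. Once this identification is made, the proof is a one-line term-by-term comparison.
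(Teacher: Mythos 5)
Your proof is correct and follows exactly the route the paper takes: the paper's own argument for Lemma~\ref{mq7} is simply to invoke the explicit representation (\ref{mn6}) from Lemma~\ref{mq1} and compare the two sums term by term using hypothesis (\ref{mn8}) with $A_s=E_s$. Your write-up just makes explicit the pointwise bookkeeping that the paper leaves implicit.
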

\begin{proof} The proof of the Lemma  \ref{mq7} follows from the formulas (\ref{mn6}).
\end{proof} 
\begin{defin}\label{v1} A filtration ${\cal F}_n \subset {\cal F}_{n+1}, \ n=\overline{1, \infty},$ on a  measurable space $\{\Omega, {\cal F}\}$ satisfies {\bf condition} ${\bf A}, $ if \\
1)  $\sigma$-algebra ${\cal F}$ coinsides with minimal $\sigma$-algebra  generated by   the sets belonging to the set $\bigcup\limits_{n=0}^\infty {\cal F}_n;$ \\
2) ${\cal F}_n$ is generated by sets $A_s^n \subset  {\cal F}, \ s=\overline{1, \infty}, \ n=\overline{1, \infty},$ such that 
 \begin{eqnarray*} A_m^n\cap A_j^n=\emptyset, \quad  m \neq j, \quad \bigcup\limits_{s=1}^ \infty A_s^n=\Omega, \quad  A_s^n=\bigcup\limits_{j \in I_s^n}A_j^{n+1}, \quad   s=\overline{1, \infty}, 
\end{eqnarray*} 
 \begin{eqnarray*}
  I_s^n \cap I_m^n=\emptyset, \quad  s \neq m, \quad \bigcup\limits_{s=1}^ \infty I_s^n=N_0, \quad   n=\overline{1, \infty}.
\end{eqnarray*}
\end{defin}
\begin{defin}\label{v2}
 On a measurable space $\{\Omega, {\cal F}\}$ with filtration ${\cal F}_n$   satisfying {\bf condition} ${\bf A}$   a set of equivalent measures  $P_1, \ldots, P_k$ satisfies  {\bf condition} ${\bf B}$ if 
\begin{eqnarray*}
P_1(A_s^n) >0,  \quad  s=\overline{1, \infty}, \quad  n=\overline{1, \infty},
\end{eqnarray*}
and for a certain $1 \leq i_0 \leq k$ the  inequalities 
\begin{eqnarray*}
\frac{P_i(A_j^{n+1})}{P_i(A_s^n)} \leq \frac{P_{i_0}(A_j^{n+1})}{P_{i_0}(A_s^n)}, \quad j \in I_s^n, \quad n=\overline{1, \infty},
\end{eqnarray*}
are valid.
\end{defin}

\begin{lemma}\label{mq10}
Let a filtration ${\cal F}_n$ and a set of equivalent measures $P_1, \ldots, P_k$ on a measurable space $\{\Omega, {\cal F}\} $ satisfy conditions $A$ and
   $B,$ correspondingly.
 Then for every $1\leq l \leq k$ and $1 \leq n \leq \infty$ the inequalities 
\begin{eqnarray}\label{mn13}
\frac{\frac{dP_i}{dP_l}}{ E^{P_l}\{\frac{dP_i}{dP_l}|  {\cal F}_n\} } \leq
\frac{\frac{dP_{i_0}}{dP_l}}{ E^{P_l}\{\frac{dP_{i_0}}{dP_l}|  {\cal F}_n\} }, \quad i=\overline{1,k}, \quad l=\overline{1,k},
\end{eqnarray}
 are valid. 
\end{lemma}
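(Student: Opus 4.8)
The plan is to reduce the statement to its finite-level analogue, which is exactly the conclusion of Lemma~\ref{mq7}, and then pass to the limit using the martingale convergence theorem. Fix $l$ and write $Z_i^{(N)}=E^{P_l}\{\frac{dP_i}{dP_l}\mid{\cal F}_N\}$. Because ${\cal F}_N$ is generated by the disjoint atoms $A_s^N$ and, by condition $B$, $P_1(A_s^N)>0$ forces $P_l(A_s^N)>0$ (the measures being equivalent), the variable $Z_i^{(N)}$ equals $P_i(A_s^N)/P_l(A_s^N)$ on $A_s^N$ and in particular is strictly positive. The target inequality (\ref{mn13}) is precisely the $N\to\infty$ limit of (\ref{mn9}) with $G_n={\cal F}_n$ and $G_N={\cal F}_N$, since $\frac{dP_i}{dP_l}$ plays the role of $E^{P_l}\{\frac{dP_i}{dP_l}\mid{\cal F}\}$. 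The case $n=\infty$ is trivial, as both sides equal $1$, so I treat $n<\infty$.

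The key step is to upgrade the one-step domination in condition $B$ to the multi-level hypothesis (\ref{mn8}) of Lemma~\ref{mq7} for the pair ${\cal F}_n\subset{\cal F}_N$, $N>n$. Given an atom $A_{j_N}^N$ of ${\cal F}_N$, condition $A$ produces a unique nested chain $A_{j_N}^N\subseteq A_{j_{N-1}}^{N-1}\subseteq\cdots\subseteq A_{j_n}^n$ with $j_{m+1}\in I_{j_m}^m$, and I would telescope
$$\frac{P_i(A_{j_N}^N)}{P_i(A_{j_n}^n)}=\prod_{m=n}^{N-1}\frac{P_i(A_{j_{m+1}}^{m+1})}{P_i(A_{j_m}^m)}.$$
By condition $B$ each factor on the right is dominated by the corresponding factor for $P_{i_0}$, and since all factors are strictly positive the product inequality survives, giving $\frac{P_i(A_{j_N}^N)}{P_i(A_{j_n}^n)}\le\frac{P_{i_0}(A_{j_N}^N)}{P_{i_0}(A_{j_n}^n)}$. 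This is exactly hypothesis (\ref{mn8}) with the atoms of ${\cal F}_N$ in the role of the $A_s$ and the atoms of ${\cal F}_n$ in the role of the $F_j$.

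With (\ref{mn8}) verified, Lemma~\ref{mq7} yields, for every finite $N>n$,
$$\frac{Z_i^{(N)}}{Z_i^{(n)}}\le\frac{Z_{i_0}^{(N)}}{Z_{i_0}^{(n)}},\quad i=\overline{1,k}.$$
To finish, I would let $N\to\infty$. Condition $A$ states that ${\cal F}$ is the minimal $\sigma$-algebra generated by $\bigcup_{n}{\cal F}_n$, so ${\cal F}_N\uparrow{\cal F}$; since $\frac{dP_i}{dP_l}$ is ${\cal F}$-measurable and $P_l$-integrable, Lévy's upward martingale convergence theorem gives $Z_i^{(N)}\to E^{P_l}\{\frac{dP_i}{dP_l}\mid{\cal F}\}=\frac{dP_i}{dP_l}$ $P_l$-almost surely. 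Dividing by the fixed positive denominators $Z_i^{(n)}$ and $Z_{i_0}^{(n)}$ and passing to the limit in the displayed inequality then delivers (\ref{mn13}).

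I expect the telescoping identification to be routine once the nested-atom structure from condition $A$ is invoked; the genuine work is in the limit passage. The main obstacle is justifying the almost-sure convergence $Z_i^{(N)}\to\frac{dP_i}{dP_l}$ rigorously (the applicability of Lévy's theorem, integrability of the Radon--Nikodym derivative, and the identification ${\cal F}_\infty={\cal F}$ from condition $A$) and confirming that the strictly positive denominators guarantee the inequality is preserved under division and under taking limits.
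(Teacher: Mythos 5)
Your proof follows the same route as the paper's: apply Lemma \ref{mq7} to obtain the finite-level inequality (\ref{mn12}) for each $N\geq n$, and then pass to the limit $N\to\infty$ via L\'evy's upward martingale convergence theorem, using that $\frac{dP_i}{dP_l}$ is ${\cal F}$-measurable and $P_l$-integrable. The only difference is that you explicitly verify, by telescoping condition $B$ along the nested atom chains supplied by condition $A$, that the hypothesis (\ref{mn8}) of Lemma \ref{mq7} actually holds for an arbitrary pair ${\cal F}_n\subset{\cal F}_N$ — a step the paper passes over silently — so your write-up is, if anything, the more complete of the two.
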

\begin{proof} Taking into account  Lemma \ref{mq7} 
  for every $1\leq l \leq k$ and $N \geq n \geq 1$  we obtain the inequalities 
\begin{eqnarray}\label{mn12}
\frac{E^{P_l}\{\frac{dP_i}{dP_l}| {\cal F}_N\}}{ E^{P_l}\{\frac{dP_i}{dP_l}|  {\cal F}_n\} } \leq
\frac{E^{P_l}\{\frac{dP_{i_0}}{dP_l}|  {\cal F}_N\}}{ E^{P_l}\{\frac{dP_{i_0}}{dP_l}|  {\cal F}_n\} }, \quad i=\overline{1,k}, \quad l=\overline{1,k}.
\end{eqnarray}
 Since  a random value $\frac{dP_i}{dP_l}$ is measurable one relative to the $\sigma$-algebra ${\cal F}$ and  integrable with respect to the measure $P_l,$ then  the conditions of Levy Theorem are valid. It implies 
 that with probability 1 $\lim\limits_{N \to \infty}E^{P_l}\{\frac{dP_i}{dP_l}|{\cal F}_N\}=\frac{dP_i}{dP_l}. $  Passing to the limit in the inequalities   (\ref{mn12}), as $N \to \infty,$ we obtain  the inequalities   
(\ref{mn13}) and the  proof of
Lemma \ref{mq10}.
\end{proof}

Let $P_1, \ldots, P_k $ be a  family of  equivalent  measures on a measurable space   $\{ \Omega, {\cal F}\}$ and let us introduce denotation
 $$M=\left\{Q, \  Q=\sum\limits_{i=1}^{k}\alpha_i P_i, \ \alpha_i \geq 0, \  i=\overline{1,k},\ \sum\limits_{i=1}^{k}\alpha_i=1\right\}.$$

\begin{lemma}\label{q2}
If  $\xi$  is an  integrable random value relative to the set of equivalent  measures $P_1, \ldots, P_k $, then the formula
\begin{eqnarray}\label{n2}
\sup\limits_{Q \in M}E^{Q}\{\xi|{\cal F}_n\}=\max\limits_{1 \leq i \leq k}E^{P_i}\{\xi|{\cal F}_n \}
\end{eqnarray}
is valid almost everywhere relative to the  measure $P_1$.
\end{lemma}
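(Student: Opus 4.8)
The plan is to express the conditional expectation under a mixture measure $Q=\sum_{i=1}^k\alpha_iP_i$ as a convex combination of the conditional expectations under the extreme measures $P_i$, and then to read off both inequalities from this representation. Writing $\varphi_i=\frac{dP_i}{dP_1}$ and using the abstract Bayes formula exactly as in the derivation of (\ref{ns23}), one has
\begin{eqnarray*}
E^{Q}\{\xi|{\cal F}_n\}=\frac{\sum_{i=1}^k\alpha_i E^{P_1}\{\varphi_i\xi|{\cal F}_n\}}{\sum_{i=1}^k\alpha_i E^{P_1}\{\varphi_i|{\cal F}_n\}}=\frac{\sum_{i=1}^k\alpha_i E^{P_1}\{\varphi_i|{\cal F}_n\}E^{P_i}\{\xi|{\cal F}_n\}}{\sum_{i=1}^k\alpha_i E^{P_1}\{\varphi_i|{\cal F}_n\}},
\end{eqnarray*}
where the second equality uses $E^{P_1}\{\varphi_i\xi|{\cal F}_n\}=E^{P_1}\{\varphi_i|{\cal F}_n\}E^{P_i}\{\xi|{\cal F}_n\}$, again a consequence of the Bayes formula relating $P_i$ and $P_1$. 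Setting
\begin{eqnarray*}
\lambda_i=\frac{\alpha_i E^{P_1}\{\varphi_i|{\cal F}_n\}}{\sum_{j=1}^k\alpha_j E^{P_1}\{\varphi_j|{\cal F}_n\}}, \quad i=\overline{1,k},
\end{eqnarray*}
gives ${\cal F}_n$-measurable weights with $\lambda_i\geq0$ and $\sum_{i=1}^k\lambda_i=1$ almost everywhere relative to $P_1$ (the denominator is strictly positive by equivalence of the measures), so that $E^{Q}\{\xi|{\cal F}_n\}=\sum_{i=1}^k\lambda_i E^{P_i}\{\xi|{\cal F}_n\}$ is a genuine convex combination.

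For the upper bound I would note that any convex combination of the finitely many values $E^{P_i}\{\xi|{\cal F}_n\}$ does not exceed their maximum, whence
\begin{eqnarray*}
E^{Q}\{\xi|{\cal F}_n\}\leq\max_{1\leq i\leq k}E^{P_i}\{\xi|{\cal F}_n\}, \quad Q\in M,
\end{eqnarray*}
almost everywhere relative to $P_1$. Since this bound is uniform in $Q$, the right-hand side is an almost sure upper bound for the whole family, and therefore dominates $\sup_{Q\in M}E^{Q}\{\xi|{\cal F}_n\}$.

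For the reverse inequality I would use that each extreme measure $P_i$ itself belongs to $M$ (take $\alpha_i=1$). Hence $E^{P_i}\{\xi|{\cal F}_n\}\leq\sup_{Q\in M}E^{Q}\{\xi|{\cal F}_n\}$ for every $i$, and taking the maximum over the finite index set $1\leq i\leq k$ yields $\max_{1\leq i\leq k}E^{P_i}\{\xi|{\cal F}_n\}\leq\sup_{Q\in M}E^{Q}\{\xi|{\cal F}_n\}$ almost everywhere. Combining the two bounds gives the asserted equality.

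The one point requiring care, and the main obstacle, is the meaning of $\sup_{Q\in M}$, since $M$ is uncountable and a literal pointwise supremum of the corresponding family of equivalence classes need not be measurable. I would interpret it as the essential supremum: the upper bound shows that $\max_{1\leq i\leq k}E^{P_i}\{\xi|{\cal F}_n\}$ is an almost sure upper bound for the family, so the essential supremum is at most this maximum, while the lower bound shows the essential supremum is at least each $E^{P_i}\{\xi|{\cal F}_n\}$ and hence at least their maximum. Both the finiteness of the extreme family and the strict positivity of the Bayes denominator, guaranteed by the equivalence of $P_1,\ldots,P_k$, are exactly what make this reduction work.
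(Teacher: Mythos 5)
Your argument is essentially the same as the paper's: the paper also writes $E^{Q}\{\xi|{\cal F}_n\}$ via the Bayes-type formula as a convex combination of the $E^{P_i}\{\xi|{\cal F}_n\}$ with ${\cal F}_n$-measurable weights, deduces the upper bound from the convex-combination form, and gets the reverse inequality from $P_i\in M$. Your additional remark that the supremum over the uncountable family $M$ must be read as an essential supremum is a point the paper leaves implicit, but it does not change the route.
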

\begin{proof} Using the formula
\begin{eqnarray}\label{n3}
 E^{Q}\{\xi|{\cal F}_n\}=\frac{\sum\limits_{i=1}^{k}\alpha_i E^{P_1}\{\varphi_i|{\cal F}_n\}E^{P_i}\{\xi|{\cal F}_n\}}{\sum\limits_{i=1}^{k}\alpha_i E^{P_1}\{\varphi_i|{\cal F}_n\}}, \quad Q \in M,
\end{eqnarray}
 where $\varphi_i=\frac{dP_i}{dP_1},$ we obtain the inequality 
$$ E^{Q}\{\xi|{\cal F}_n\} \leq \max\limits_{1 \leq i \leq k}E^{P_i}\{\xi|{\cal F}_n\},$$  
or 
$$\sup\limits_{Q \in M} E^{Q}\{\xi|{\cal F}_n\} \leq \max\limits_{1 \leq i \leq k}E^{P_i}\{\xi|{\cal F}_n\}.$$
On the other side  
$$ E^{P_i}\{\xi|{\cal F}_n\} \leq \sup\limits_{Q \in M} E^{Q}\{\xi|{\cal F}_n\} .$$ 
Therefore,
$$\max\limits_{1 \leq i \leq k} E^{P_i}\{\xi|{\cal F}_n\} \leq \sup\limits_{Q \in M} E^{Q}\{\xi|{\cal F}_n\} .$$ 
The Lemma \ref{q2} is proved.
\end{proof}

\begin{lemma}\label{hj1}
Let $G$ be a sub $\sigma$-algebra of ${\cal F}$ and $f_1, \ldots, f_n$ be nonnegative integrable random values relative to every measure from $M.$ Then 
\begin{eqnarray}\label{rgps1}
 E^P\{\max\{f_1, \ldots, f_n\}|G\}\geq \max\{E^P\{f_1|G\}, \ldots, E^P\{f_n|G\}\}, \quad P \in M. \end{eqnarray}
\end{lemma}
\begin{proof} From inequalities
\begin{eqnarray}\label{gps1}
\max\limits_{1 \leq  i \leq n} f_i \geq f_j , \quad j=\overline{1,n},
\end{eqnarray}
we have
\begin{eqnarray}\label{gps2}
E^P\{\max\limits_{1 \leq  i \leq n} f_i|G\}\geq  E^P\{ f_j|G\} , \quad j=\overline{1,n}.
\end{eqnarray}
The last imply
\begin{eqnarray}\label{gps3}
E^P\{\max\limits_{1 \leq  i \leq n} f_i|G\}  \geq  \max\limits_{1 \leq  i \leq n} E^P\{f_i|G\}.
\end{eqnarray}
\end{proof}

In the next Lemma we present formula for calculation of conditional expectatation relative to another measure from $M.$
\begin{lemma}\label{q1}  
Let $M$ be a convex  set of equivalent measures  and let  $\eta$ be an integrable random value relative to every measure from $M$ on a measurable space  $\{ \Omega, {\cal F}\}.$  Then the following formula 
\begin{eqnarray}\label{n1}
E^{P_1}\{\eta|{\cal F}_n\}=E^{P_2}\left\{\eta \varphi_n^{P_1}|{\cal F}_n\right\}, \quad n=\overline{1, \infty},   
\end{eqnarray}
is valid,  where
\begin{eqnarray*}
 \varphi_n^{P_1}=\frac{dP_1}{dP_2}\left[E^{P_2}\left\{\frac{dP_1}{dP_2}|{\cal F}_n\right\}\right]^{-1}, \quad P_1, \ P_2 \in M.
\end{eqnarray*}
\end{lemma}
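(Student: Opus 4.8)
The plan is to verify that the right-hand side of (\ref{n1}) satisfies the two properties characterizing the conditional expectation $E^{P_1}\{\eta|{\cal F}_n\}$: it must be ${\cal F}_n$-measurable, and its $P_1$-integral over every $A\in{\cal F}_n$ must equal $\int_A\eta\,dP_1$. Abbreviate $Z=\frac{dP_1}{dP_2}$ and $Z_n=E^{P_2}\{Z|{\cal F}_n\}$, so that $\varphi_n^{P_1}=Z/Z_n$. Since the measures in $M$ are equivalent, $Z>0$ almost everywhere and hence $Z_n>0$ almost everywhere, so the quotient defining $\varphi_n^{P_1}$ is legitimate, and the candidate $X:=E^{P_2}\{\eta\varphi_n^{P_1}|{\cal F}_n\}$ is ${\cal F}_n$-measurable by construction.

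First I would simplify $X$ by extracting the ${\cal F}_n$-measurable factor $Z_n^{-1}$ from the conditional expectation, obtaining $X=Z_n^{-1}\,E^{P_2}\{\eta Z|{\cal F}_n\}$. Then, fixing $A\in{\cal F}_n$ and passing from $P_1$ to $P_2$ through the density $Z$, the key chain of identities reads
\begin{align*}
\int_A X\,dP_1 &= \int_A X Z\,dP_2 = \int_A X Z_n\,dP_2 \\
&= \int_A E^{P_2}\{\eta Z|{\cal F}_n\}\,dP_2 = \int_A \eta Z\,dP_2 = \int_A \eta\,dP_1 .
\end{align*}
Here the second equality uses that $X\chi_A$ is ${\cal F}_n$-measurable, so $Z$ may be replaced by $Z_n=E^{P_2}\{Z|{\cal F}_n\}$ under the $P_2$-integral; the third substitutes the simplified form of $X$ and cancels $Z_n$; and the fourth is the defining property of $E^{P_2}\{\cdot|{\cal F}_n\}$ over the set $A\in{\cal F}_n$. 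Arbitrariness of $A\in{\cal F}_n$ then yields $X=E^{P_1}\{\eta|{\cal F}_n\}$, which is exactly (\ref{n1}).

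I expect the only delicate points to be measure-theoretic rather than structural. One must guarantee that the objects entering the conditional expectations are integrable: integrability of $\eta$ with respect to $P_1$ is precisely $P_2$-integrability of $\eta Z$, which makes $E^{P_2}\{\eta Z|{\cal F}_n\}$ well defined, while the boundedness of the densities standing throughout the paper (cf. (\ref{gon1})) bounds $Z_n$ away from zero and hence keeps $\varphi_n^{P_1}=Z/Z_n$ bounded, so that $\eta\varphi_n^{P_1}$ is $P_2$-integrable and the right-hand side of (\ref{n1}) is genuinely defined. The second point needing care is the strict positivity of $Z_n$ almost surely with respect to every measure in $M$, so that division is unambiguous; this again follows from the mutual equivalence of $P_1$ and $P_2$. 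Once these are recorded, each manipulation in the display above is routine.
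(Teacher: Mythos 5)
Your verification is correct: it is the standard abstract Bayes-formula argument, checking that $X=Z_n^{-1}E^{P_2}\{\eta Z|{\cal F}_n\}$ is ${\cal F}_n$-measurable and integrates against $P_1$ over every $A\in{\cal F}_n$ to $\int_A\eta\,dP_1$, and each link in your chain of equalities is justified. The paper itself offers no argument (it declares the lemma evident), so there is nothing to compare against; I would only note that your appeal to the bounds (\ref{gon1}) is unnecessary, since the lemma does not assume them and your own factorization $E^{P_2}\{\eta Z/Z_n|{\cal F}_n\}=Z_n^{-1}E^{P_2}\{\eta Z|{\cal F}_n\}$ already reduces everything to $P_2$-integrability of $\eta Z$, which is exactly $P_1$-integrability of $\eta$; strict positivity of $Z_n$ follows from equivalence of the measures alone.
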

\begin{proof}The proof of Lemma \ref{q1}   is evident.
\end{proof}

\begin{lemma}\label{q3} Suppose that a filtration ${\cal F}_n$  and a set of equivalent measures $\{P_1, \ldots, P_k \}$ on $\{ \Omega, {\cal F}\}$ satisfy conditions $A$ and $B,$ correspondingly. Let $\xi$ be a nonnegative bounded random value on  a measurable space $\{ \Omega, {\cal F}\}.$ 
 Then the formulae 
\begin{eqnarray}\label{n3}
 E^{P_l}\{\max\limits_{1 \leq i \leq k} E^{P_i}\{\xi|{\cal F}_n\}|{\cal F}_m\}=
 \max\limits_{1 \leq i \leq k} E^{P_l}\{\xi  \varphi_n^{P_i}|{\cal F}_m\}, \quad n>m,  \quad l=\overline{1,k},
\end{eqnarray}
are valid, where
$$ \varphi_n^{P_i}=\frac{dP_i}{dP_l}\left[E^{P_l}\left\{\frac{dP_i}{dP_l}|{\cal F}_n\right\}\right]^{-1}.$$
\end{lemma}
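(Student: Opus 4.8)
The plan is to reduce both sides of the claimed identity to the single expression $E^{P_l}\{\xi\,\varphi_n^{P_{i_0}}|{\cal F}_m\}$, where $i_0$ is the distinguished index supplied by condition $B$. The engine of the argument is Lemma \ref{q1}, which rewrites a conditional expectation taken with respect to $P_i$ as one taken with respect to $P_l$ against the density factor $\varphi_n^{P_i}$. Concretely, applying Lemma \ref{q1} with $P_1=P_i$, $P_2=P_l$ and $\eta=\xi$ gives $E^{P_i}\{\xi|{\cal F}_n\}=E^{P_l}\{\xi\,\varphi_n^{P_i}|{\cal F}_n\}$ for each $i=\overline{1,k}$, with precisely the $\varphi_n^{P_i}$ written in the statement.

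The decisive step is to show that the maximum over $i$ is attained at the fixed index $i_0$, both inside (at level $n$) and outside (at level $m$). Here I would invoke Lemma \ref{mq10}: since the filtration ${\cal F}_n$ satisfies condition $A$ and the measures $P_1,\ldots,P_k$ satisfy condition $B$, that Lemma yields the pointwise domination $\varphi_n^{P_i}\leq\varphi_n^{P_{i_0}}$ for every $i=\overline{1,k}$. Because $\xi\geq 0$, this gives $\xi\,\varphi_n^{P_i}\leq\xi\,\varphi_n^{P_{i_0}}$ almost everywhere, and monotonicity of $E^{P_l}\{\cdot|{\cal F}_n\}$ then produces $E^{P_i}\{\xi|{\cal F}_n\}=E^{P_l}\{\xi\,\varphi_n^{P_i}|{\cal F}_n\}\leq E^{P_l}\{\xi\,\varphi_n^{P_{i_0}}|{\cal F}_n\}=E^{P_{i_0}}\{\xi|{\cal F}_n\}$. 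Hence $\max_{1\leq i\leq k}E^{P_i}\{\xi|{\cal F}_n\}=E^{P_{i_0}}\{\xi|{\cal F}_n\}$.

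With the inner maximum collapsed, the left-hand side becomes $E^{P_l}\{E^{P_l}\{\xi\,\varphi_n^{P_{i_0}}|{\cal F}_n\}|{\cal F}_m\}$, and since $n>m$ gives ${\cal F}_m\subset{\cal F}_n$, the tower property reduces it to $E^{P_l}\{\xi\,\varphi_n^{P_{i_0}}|{\cal F}_m\}$. For the right-hand side I would use the same pointwise domination: $\xi\,\varphi_n^{P_i}\leq\xi\,\varphi_n^{P_{i_0}}$ together with monotonicity of $E^{P_l}\{\cdot|{\cal F}_m\}$ force $\max_{1\leq i\leq k}E^{P_l}\{\xi\,\varphi_n^{P_i}|{\cal F}_m\}=E^{P_l}\{\xi\,\varphi_n^{P_{i_0}}|{\cal F}_m\}$ as well. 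Comparing the two reduced expressions closes the proof.

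I expect the only real obstacle to be bookkeeping rather than anything conceptual: one must check that the \emph{same} index $i_0$ simultaneously maximizes the inner family $\{E^{P_i}\{\xi|{\cal F}_n\}\}_i$ and the outer family $\{E^{P_l}\{\xi\,\varphi_n^{P_i}|{\cal F}_m\}\}_i$. Both facts rest on the single pointwise inequality $\varphi_n^{P_i}\leq\varphi_n^{P_{i_0}}$ from Lemma \ref{mq10} combined with $\xi\geq 0$; the boundedness of $\xi$ enters only to guarantee that all the conditional expectations are finite and the manipulations legitimate.
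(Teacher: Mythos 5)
Your proposal is correct and follows essentially the same route as the paper: both reduce the inner conditional expectations via Lemma \ref{q1} and rest on the pointwise domination $\varphi_n^{P_i}\leq\varphi_n^{P_{i_0}}$ supplied by Lemma \ref{mq10} (i.e.\ by conditions $A$ and $B$) to identify the maximizing index, then apply the tower property. The only difference is cosmetic: the paper establishes the two inequalities separately, invoking Lemma \ref{hj1} for the ``$\geq$'' direction, whereas you collapse both maxima at $i_0$ directly, which slightly streamlines the argument and makes explicit the role of Lemma \ref{mq10} that the paper leaves implicit.
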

\begin{proof}  From Lemma \ref{q1} we obtain
$$\max\limits_{1 \leq i \leq k}E^{P_i}\{\xi|{\cal F}_n\}= \max\limits_{1 \leq i \leq k}E^{P_l}\{\xi  \varphi_n^{P_i}|{\cal F}_n\}, \quad l=\overline{1,k}.$$
Let us introduce denotation $T_i= \xi \varphi_n^{P_i}.$ Then  $T_i$ is an integrable random value and
$$ \max\limits_{1 \leq i \leq k}E^{P_i}\{\xi|{\cal F}_n\}=\max\limits_{1 \leq i \leq k}E^{P_l}\{\xi  \varphi_n^{P_i}|{\cal F}_n\}=\max\limits_{1 \leq i \leq k}E^{P_l}\{T_i|{\cal F}_n\}, \quad l=\overline{1,k}.$$
Due to Lemma \ref{hj1} we obtain the inequality
$$E^{P_l}\{ \max\limits_{1 \leq i \leq k}E^{P_i}\{\xi|{\cal F}_n\}|{\cal F}_m\}=
E^{P_l}\{\max\limits_{1 \leq i \leq k}E^{P_l}\{T_i|{\cal F}_n\}|{\cal F}_m\} \geq $$
$$\max\limits_{1 \leq i \leq k}E^{P_l}\{E^{P_l}\{T_i|{\cal F}_n\}|{\cal F}_m\}=
\max\limits_{1 \leq i \leq k}E^{P_l}\{T_i|{\cal F}_m\}.$$
Let us prove reciprocal inequality
$$E^{P_l}\{ \max\limits_{1 \leq i \leq k}E^{P_i}\{\xi|{\cal F}_n\}|{\cal F}_m\} \leq
\max\limits_{1 \leq i \leq k}E^{P_l}\{T_i|{\cal F}_m\}.$$
The last inequality follows from the fact that  $\max\limits_{1 \leq i \leq k}E^{P_l}\{T_i|{\cal F}_n\}=$\\ $E^{P_{l}}\{T_{i_0}|{\cal F}_n\}.$
Really,
$$E^{P_l}\{ \max\limits_{1 \leq i \leq k}E^{P_i}\{\xi|{\cal F}_n\}|{\cal F}_m\}= 
E^{P_l}\{E^{P_{l}}\{T_{i_0}|{\cal F}_n\}|{\cal F}_m\}=E^{P_{l}}\{T_{i_0}|{\cal F}_m\}\leq $$
$$ \max\limits_{1 \leq i\leq k}E^{P_l}\{T_{i}|{\cal F}_m\}.$$
Lemma \ref{q3} is proved.
\end{proof}

The next Lemma is a consequence of Lemma \ref{q3}.
\begin{lemma}\label{qq3} Let a filtration ${\cal F}_n$  and the set of equivalent measures $\{P_1, \ldots, P_k \}$ on a measurable space $\{ \Omega, {\cal F}\}$ satisfy conditions $A$ and $B,$ correspondingly  and let $\xi$ be a nonnegative bounded random value 
on $\{ \Omega, {\cal F}\}.$ 
 Then the equalities
\begin{eqnarray}\label{nn3}
 E^{P_l}\{\xi \max\limits_{1 \leq i \leq k}    \varphi_n^{P_i}|{\cal F}_n\}=
 \max\limits_{1 \leq i \leq k} E^{P_l}\{\xi  \varphi_n^{P_i}|{\cal F}_n\}, \quad l=\overline{1,k}, \quad n=\overline{0,\infty},
\end{eqnarray}
are valid, where
$$ \varphi_n^{P_i}=\frac{dP_i}{dP_l}\left[E^{P_l}\left\{\frac{dP_i}{dP_l}|{\cal F}_n\right\}\right]^{-1}.$$
\end{lemma}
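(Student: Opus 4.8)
The plan is to reduce the whole identity to the single-index domination supplied by Lemma \ref{mq10}. The essential observation is that, under conditions $A$ and $B$, the pointwise maximum $\max\limits_{1 \leq i \leq k}\varphi_n^{P_i}$ is not a genuine upper envelope of $k$ competing functions but collapses to one distinguished term. Indeed, Lemma \ref{mq10} asserts precisely that for the fixed index $i_0$ appearing in condition $B$ one has $\varphi_n^{P_i}\leq \varphi_n^{P_{i_0}}$ almost everywhere relative to $P_l$, for every $i=\overline{1,k}$ and every $l=\overline{1,k}$. Since $i_0$ does not depend on $\omega$, this yields $\max\limits_{1 \leq i \leq k}\varphi_n^{P_i}=\varphi_n^{P_{i_0}}$ almost everywhere, and it is exactly this uniformity of the maximizer that makes the interchange of $\max$ and conditional expectation legitimate.

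Granting this, I would finish in two short steps. First, substituting $\max\limits_{1 \leq i \leq k}\varphi_n^{P_i}=\varphi_n^{P_{i_0}}$ into the left-hand side gives at once
$$E^{P_l}\{\xi \max\limits_{1 \leq i \leq k}\varphi_n^{P_i}|{\cal F}_n\}=E^{P_l}\{\xi \varphi_n^{P_{i_0}}|{\cal F}_n\}.$$
Second, because $\xi\geq 0$, the inequalities $\varphi_n^{P_i}\leq\varphi_n^{P_{i_0}}$ are preserved after multiplication by $\xi$ and after taking conditional expectation, so $E^{P_l}\{\xi \varphi_n^{P_i}|{\cal F}_n\}\leq E^{P_l}\{\xi \varphi_n^{P_{i_0}}|{\cal F}_n\}$ for all $i$, with equality at $i=i_0$; hence $\max\limits_{1 \leq i \leq k}E^{P_l}\{\xi \varphi_n^{P_i}|{\cal F}_n\}=E^{P_l}\{\xi \varphi_n^{P_{i_0}}|{\cal F}_n\}$. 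Comparing the two right-hand members proves the asserted identity. This is precisely the mechanism already exploited inside the proof of Lemma \ref{q3}, where $\max\limits_{1 \leq i \leq k}E^{P_l}\{T_i|{\cal F}_n\}=E^{P_{l}}\{T_{i_0}|{\cal F}_n\}$ for $T_i=\xi\varphi_n^{P_i}$, which is why the present statement is advertised as its consequence.

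The main obstacle is conceptual rather than computational: everything hinges on the fact that a single measure $P_{i_0}$ dominates all the others in the sense of Lemma \ref{mq10}, and the nonnegativity of $\xi$ is what allows me to push that pointwise domination through the conditional expectation in the correct direction. I would also verify the boundary value $n=\infty$ separately, where condition $A$ gives ${\cal F}_\infty={\cal F}$, so that $\varphi_\infty^{P_i}\equiv 1$ for every $i$ and both sides reduce to $\xi$; the identity then holds trivially, consistently with Lemma \ref{mq10} being stated for $1 \leq n \leq \infty$.
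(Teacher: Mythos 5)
Your proof is correct and follows essentially the same route as the paper: both arguments rest on Lemma \ref{mq10} giving $\max\limits_{1 \leq i \leq k}\varphi_n^{P_i}=\varphi_n^{P_{i_0}}$ almost everywhere and on the nonnegativity of $\xi$ to carry this domination through the conditional expectation; the paper merely packages the same facts as a closed chain of three inequalities beginning and ending at $\max\limits_{1 \leq i \leq k}E^{P_l}\{\xi\varphi_n^{P_i}|{\cal F}_n\}$, while you evaluate both sides to the common value $E^{P_l}\{\xi\varphi_n^{P_{i_0}}|{\cal F}_n\}$. No substantive difference.
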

\begin{proof}
\begin{eqnarray*}\label{nn3}
 \max\limits_{1 \leq i \leq k}E^{P_l}\{ \xi  \varphi_n^{P_i}|{\cal F}_n\} \leq 
 E^{P_l}\{ \xi \max\limits_{1 \leq i \leq k}  \varphi_n^{P_i}|{\cal F}_n\}\leq
\end{eqnarray*}
\begin{eqnarray}\label{nn4}
  E^{P_l}\{\xi  \varphi_n^{P_{i_0}}|{\cal F}_n\} \leq \max\limits_{1 \leq i \leq k}E^{P_l}\{\xi  \varphi_n^{P_i}|{\cal F}_n\} , \quad l=\overline{1,k}, \quad n=\overline{0,\infty}.
\end{eqnarray}
The last inequalities prove Lemma \ref{qq3}.
\end{proof}

\begin{lemma}\label{lkq4} 
 Let a filtration ${\cal F}_n$  and a set of equivalent measures $\{P_1, \ldots, P_k \}$ on  a measurable space $\{ \Omega, {\cal F}\}$ satisfy conditions $A$ and $B,$ correspondingly.
Then for every nonnegative integrable random value $\xi$ relative to the set of measures  $P_1, \ldots, P_k$ the inequalities 
\begin{eqnarray}\label{lkn4}
 E^{P_l}\{\max\limits_{1 \leq i \leq k} E^{P_i}\{\xi|{\cal F}_n\}|{\cal F}_m\}\leq
\max\limits_{1 \leq i \leq k}  E^{P_i}\{\xi |{\cal F}_m\}, \quad n>m,  \quad l=\overline{1,k},
\end{eqnarray}
are  valid.
\end{lemma}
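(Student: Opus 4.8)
The plan is to exploit condition $B$ to replace the awkward maximum $\max_{1\le i\le k}E^{P_i}\{\xi|{\cal F}_n\}$ by a single conditional expectation, after which the claimed inequality reduces to the tower property. The key observation is that the distinguished index $i_0$ of condition $B$ dominates every other index at every level. Concretely, fix a level $j$ and set $h_j^i=\frac{dP_i}{dP_{i_0}}\big[E^{P_{i_0}}\{\frac{dP_i}{dP_{i_0}}|{\cal F}_j\}\big]^{-1}$. Lemma \ref{q1} (with reference measure $P_{i_0}$) gives, for nonnegative integrable $\zeta$, the identity $E^{P_i}\{\zeta|{\cal F}_j\}=E^{P_{i_0}}\{\zeta\,h_j^i|{\cal F}_j\}$, while Lemma \ref{mq10} applied with $l=i_0$ yields the pointwise bound $h_j^i\le 1$. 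Since $\zeta\ge 0$, these two facts give
$$E^{P_i}\{\zeta|{\cal F}_j\}\le E^{P_{i_0}}\{\zeta|{\cal F}_j\}, \quad i=\overline{1,k},$$
and hence $\max_{1\le i\le k}E^{P_i}\{\zeta|{\cal F}_j\}=E^{P_{i_0}}\{\zeta|{\cal F}_j\}$.

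Granting this domination, the proof is a short chain. Applying it at level $n$ collapses the inner maximum:
$$E^{P_l}\{\max_{1\le i\le k}E^{P_i}\{\xi|{\cal F}_n\}|{\cal F}_m\}=E^{P_l}\{E^{P_{i_0}}\{\xi|{\cal F}_n\}|{\cal F}_m\}.$$
Writing $\eta=E^{P_{i_0}}\{\xi|{\cal F}_n\}\ge0$ and using, in turn, the domination at level $m$ (with ordinary index $l$), the tower property for $P_{i_0}$, and a final use of the domination at level $m$, we obtain
$$E^{P_l}\{\eta|{\cal F}_m\}\le E^{P_{i_0}}\{\eta|{\cal F}_m\}=E^{P_{i_0}}\{\xi|{\cal F}_m\}=\max_{1\le i\le k}E^{P_i}\{\xi|{\cal F}_m\},$$
and combining the two displays yields exactly (\ref{lkn4}).

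For the integrability bookkeeping I would first run the whole argument for bounded $\xi$, where Lemmas \ref{q1} and \ref{q3} apply verbatim, and then pass to a general nonnegative integrable $\xi$ via the truncations $\xi\wedge N\uparrow\xi$ and conditional monotone convergence on both sides; alternatively, one notes that the identity of Lemma \ref{q1} and the bound $h_j^i\le1$ persist for nonnegative $\zeta$ with values in $[0,\infty]$, so the domination, and with it the whole chain, holds directly. The step I expect to be the real obstacle is not any of these manipulations but the collapse of the maximum itself: everything hinges on the dominating index $i_0$ being one and the same at the two different levels $m$ and $n$ and independent of $\xi$. This uniformity is precisely what condition $B$ supplies through the pointwise estimate of Lemma \ref{mq10}, and it is what makes the simultaneous use of the domination at levels $n$ and $m$ legitimate.
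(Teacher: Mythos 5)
Your proof is correct and reaches (\ref{lkn4}) by a genuinely different route from the paper's. The paper first rewrites the left-hand side as $\max_{1\le i\le k}E^{P_l}\{\xi\varphi_n^{P_i}|{\cal F}_m\}$ via Lemma \ref{q3}, then argues for the pointwise inequality $\max_i\xi\varphi_n^{P_i}\le\max_i\xi\varphi_m^{P_i}$ through the covering (\ref{por1}) and the step (\ref{por3})--(\ref{ppor4}), and converts back with Lemma \ref{qq3}, finishing with the same truncation you propose for unbounded $\xi$. You instead extract from Lemma \ref{mq10} with $l=i_0$ the bound $h_j^i\le1$, hence via Lemma \ref{q1} the domination $E^{P_i}\{\zeta|{\cal F}_j\}\le E^{P_{i_0}}\{\zeta|{\cal F}_j\}$ for $\zeta\ge0$, which collapses both maxima to conditional expectations under the single measure $P_{i_0}$ and reduces the whole claim to the tower property plus one more use of the domination at level $m$. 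This is shorter, bypasses Lemmas \ref{q3} and \ref{qq3} entirely, and buys something real: it avoids the most delicate step of the paper's argument, namely passing from ``for each $\omega$ there exists \emph{some} $i$ with $\xi\varphi_n^{P_i}\le\xi\varphi_m^{P_i}$'' to the corresponding inequality between the maxima, which does not follow from an existential statement alone and in fact needs precisely your domination-by-$i_0$ observation to be justified. Your diagnosis that the crux is the uniformity of the dominating index $i_0$ across levels and across $\xi$ is accurate; the paper uses the same structural fact implicitly inside Lemmas \ref{q3} and \ref{qq3} (where $\max_iE^{P_l}\{T_i|{\cal F}_n\}=E^{P_l}\{T_{i_0}|{\cal F}_n\}$ is asserted). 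One side effect your cleaner formulation exposes: applied at ${\cal F}_0=\{\emptyset,\Omega\}$ the domination gives $E^{P_i}\zeta\le E^{P_{i_0}}\zeta$ for every $\zeta\ge0$, so condition $B$ is in fact extremely restrictive; this is a feature of the hypotheses shared by both proofs, not a defect of your argument.
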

\begin{proof}
First, consider the case of bounded nonnegative random value $\xi.$ It is evident that the following equalities 
\begin{eqnarray}\label{por1}
 \bigcup\limits_{i=1}^k\left\{\omega, \ E^{P_l}\left\{\frac{dP_i}{dP_l}|{\cal F}_n\right\}\geq E^{P_l}\left\{\frac{dP_i}{dP_l}|{\cal F}_m\right\}\right\} =\Omega,  \ n > m, 
\end{eqnarray}
are valid.
Due to  (\ref{por1}) for every $\omega \in \Omega$ there exist  $1 \leq i \leq k $ such that
\begin{eqnarray}\label{por3}
\frac{\xi \frac{dP_i}{dP_l}}{E^P\{\frac{dP_i}{dP_l}|{\cal F}_n\}} \leq \frac{\xi \frac{dP_i}{dP_l}}{E^P\{\frac{dP_i}{dP_l}|{\cal F}_m\}}.
\end{eqnarray}
Therefore,
\begin{eqnarray}\label{ppor4}
\max\limits_{1 \leq i \leq k}\frac{\xi \frac{dP_i}{dP_l}}{E^{P_l}\{\frac{dP_i}{dP_l}|{\cal F}_n\}} \leq \max\limits_{1 \leq i \leq k}\frac{\xi \frac{dP_i}{dP_l}}{E^{P_l}\{\frac{dP_i}{dP_l}|{\cal F}_m\}}.
\end{eqnarray}
From (\ref{ppor4}) we obtain inequality
\begin{eqnarray}\label{por4}
E^{P_l}\left\{\max\limits_{1 \leq i \leq k}\frac{\xi \frac{dP_i}{dP_l}}{E^{P_l}\{\frac{dP_i}{dP_l}|{\cal F}_n\}}|{\cal F}_m\right\} \leq E^{P_l}\left\{\max\limits_{1 \leq i \leq k}\frac{\xi \frac{dP_i}{dP_l}}{E^{P_l}\{\frac{dP_i}{dP_l}|{\cal F}_m\}}|{\cal F}_m\right\}.
\end{eqnarray}
The Lemmas  \ref{q3},  \ref{qq3} and  inequality (\ref{por4}) prove  Lemma \ref{lkq4}, as $\xi$ is bounded random value. Let us consider the case as 
$\max\limits_{1 \leq i \leq k}  E^{P_i}\xi < \infty.$ Let $\xi_s, s=\overline{1, \infty},$ be a sequence of bounded random values  converging to $\xi$ monotonuosly. Then 
\begin{eqnarray}\label{alkn44}
 E^{P_l}\{\max\limits_{1 \leq i \leq k} E^{P_i}\{\xi_s|{\cal F}_n\}|{\cal F}_m\}\leq
\max\limits_{1 \leq i \leq k}  E^{P_i}\{\xi_s |{\cal F}_m\}, \quad l=\overline{1,k}.
\end{eqnarray}
Due to monotony convergence  of $\xi_s$ to $\xi,$ as $s \to \infty,$ we can pass to the limit under conditional expectations on the left and on the right in inequalities (\ref{alkn44}) that proves  Lemma \ref{lkq4}.  
\end{proof}

\begin{lemma}\label{q5}
 Let a filtration ${\cal F}_n$  and a set of equivalent measures $\{P_1, \ldots, P_k \}$ on a measurable space  $\{ \Omega, {\cal F}\}$ satisfy conditions $A$ and $B,$ correspondingly, and
 let  $\xi$ be an  integrable random value relative to the set of  equivalent measures $P_1, \ldots, P_k.$  Then the inequalities
$$ E^Q\{\sup\limits_{P \in M} E^P\{\xi|{\cal F}_n\}|{\cal F}_m\} \leq 
\sup\limits_{P \in M} E^P\{\xi|{\cal F}_m\}, \quad n>m, \quad Q \in M,$$
are valid.
\end{lemma}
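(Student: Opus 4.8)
The plan is to reduce the supremum over $M$ to a maximum over the finitely many generators $P_1,\ldots,P_k$ by means of Lemma \ref{q2}, and then to pass from these extreme measures to an arbitrary $Q\in M$ using the fact that conditioning under $Q$ is a random convex combination of conditioning under the $P_i$. The estimate for the generators is already available from Lemma \ref{lkq4}, so only the convex-hull step requires work.

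First I would apply Lemma \ref{q2} on both levels $n$ and $m$: almost everywhere with respect to $P_1$, and hence with respect to every measure in $M$ by equivalence, one has $\sup_{P\in M}E^P\{\xi|{\cal F}_n\}=\max_{1\le i\le k}E^{P_i}\{\xi|{\cal F}_n\}$ and likewise at level $m$. Thus it suffices to establish, for each $Q\in M$, the inequality $E^Q\{\max_{1\le i\le k}E^{P_i}\{\xi|{\cal F}_n\}|{\cal F}_m\}\le \max_{1\le i\le k}E^{P_i}\{\xi|{\cal F}_m\}$. Writing $Q=\sum_{i=1}^k\alpha_iP_i$ and setting $\varphi_i=\frac{dP_i}{dP_1}$, I would invoke the representation of $E^Q\{\cdot|{\cal F}_m\}$ from the proof of Lemma \ref{q2}, namely $E^Q\{\eta|{\cal F}_m\}=\frac{\sum_{i=1}^k\alpha_iE^{P_1}\{\varphi_i|{\cal F}_m\}E^{P_i}\{\eta|{\cal F}_m\}}{\sum_{i=1}^k\alpha_iE^{P_1}\{\varphi_i|{\cal F}_m\}}$, which exhibits $E^Q\{\eta|{\cal F}_m\}$ as an ${\cal F}_m$-measurable convex combination $\sum_i\beta_iE^{P_i}\{\eta|{\cal F}_m\}$ of the conditional expectations under the generators, the weights $\beta_i$ being nonnegative and summing to one.

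Taking $\eta=\max_{1\le j\le k}E^{P_j}\{\xi|{\cal F}_n\}$, Lemma \ref{lkq4} applied with each choice $l=i$ gives $E^{P_i}\{\eta|{\cal F}_m\}\le \max_{1\le j\le k}E^{P_j}\{\xi|{\cal F}_m\}$ for every $i$. The right-hand side is ${\cal F}_m$-measurable and independent of $i$, so the convex combination $E^Q\{\eta|{\cal F}_m\}=\sum_i\beta_iE^{P_i}\{\eta|{\cal F}_m\}$ is dominated by the same bound, and the claimed inequality follows. The main obstacle is precisely this passage from the extreme points $P_l$, for which Lemma \ref{lkq4} is stated, to an arbitrary $Q$ in the convex hull; it is resolved by the convex-combination formula above rather than by any fresh estimate. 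Two routine points remain: the finite maximum $\eta$ is integrable because it is a maximum of finitely many conditional expectations of the integrable $\xi$; and, since Lemma \ref{lkq4} is phrased for nonnegative $\xi$, the general integrable case is recovered by the same monotone-approximation device used in its proof.
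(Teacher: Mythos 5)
Your proposal is correct and follows essentially the same route as the paper: reduce the supremum to $\max_{1\le i\le k}E^{P_i}\{\xi|{\cal F}_n\}$ via Lemma \ref{q2}, write $E^Q\{\cdot|{\cal F}_m\}$ as an ${\cal F}_m$-measurable convex combination of the $E^{P_i}\{\cdot|{\cal F}_m\}$, and bound each term by $\max_{1\le i\le k}E^{P_i}\{\xi|{\cal F}_m\}$ using Lemma \ref{lkq4}. You are in fact slightly more careful than the paper, which cites Lemma \ref{lkq4} only implicitly and does not comment on the passage from nonnegative to general integrable $\xi$.
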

\begin{proof} From the equality
$$\sup\limits_{Q \in M}E^{Q}\{\xi|{\cal F}_n\}=\max\limits_{1 \leq i \leq k}E^{P_i}\{\xi|{\cal F}_n\}$$
we obtain inequality
$$ E^{Q}\left\{\max\limits_{1 \leq i \leq k}E^{P_i}\{\xi|{\cal F}_n\}|{\cal F}_m\right\}=\frac{\sum\limits_{j=1}^{k}\alpha_j E^{P_1}\{\varphi_j|{\cal F}_m\}E^{P_j}\left\{\max\limits_{1 \leq i \leq k}E^{P_i}\{\xi|{\cal F}_n\}|{\cal F}_m\right\}}{\sum\limits_{j=1}^{k}\alpha_j E^{P_1}\{\varphi_j|{\cal F}_m\}}\leq$$
$$\leq \max\limits_{1 \leq i \leq k}E^{P_i}\{\xi|{\cal F}_m\}=\sup\limits_{P \in M} E^P\{\xi|{\cal F}_m\}.$$
Lemma \ref{q5} is proved.
\end{proof}

\begin{lemma}\label{1q5}
 Let a filtration ${\cal F}_n$  and a set of equivalent measures $\{P_1, \ldots, P_k \}$ on a measurable space  $\{ \Omega, {\cal F}\}$ satisfy conditions $A$ and $B,$ correspondingly,
  and  let  $\xi$ be a nonnegative integrable random value with respect to this set of measures  and such that 
\begin{eqnarray}\label{r7}
 E^{P_i}\xi=M_0, \quad i=\overline{1,k},
\end{eqnarray}
then the random process  $\{M_m=\sup\limits_{P\in M}E^P\{\xi|{\cal F}_m\}, {\cal F}_m\}_{m=0}^\infty$ is a martingale relative to a convex set of equivalent measures  $M.$
\end{lemma}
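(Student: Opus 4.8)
The plan is to reduce the supremum over the convex set $M$ to a maximum over the finitely many generating measures, establish the supermartingale inequality, and then upgrade it to a martingale via Lemma \ref{l1}. First I would invoke Lemma \ref{q2} to rewrite $M_m=\sup\limits_{P\in M}E^P\{\xi|{\cal F}_m\}=\max\limits_{1\leq i\leq k}E^{P_i}\{\xi|{\cal F}_m\}$, which holds almost everywhere with respect to every measure in $M$ since they are all equivalent. Each $E^{P_i}\{\xi|{\cal F}_m\}$ is ${\cal F}_m$-measurable, so $M_m$ is adapted, and $\xi\geq 0$ gives $M_m\geq 0$. Because ${\cal F}_0=\{\emptyset,\Omega\}$, the value $M_0=\max\limits_{1\leq i\leq k}E^{P_i}\xi$ is the constant specified in the hypothesis.

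Next I would establish the supermartingale property. This is exactly the content of Lemma \ref{q5}, which, using conditions $A$ and $B$, yields for $n>m$ and $Q\in M$ the inequality $E^Q\{M_n|{\cal F}_m\}=E^Q\{\sup\limits_{P\in M}E^P\{\xi|{\cal F}_n\}|{\cal F}_m\}\leq \sup\limits_{P\in M}E^P\{\xi|{\cal F}_m\}=M_m$. In particular $E^QM_m\leq M_0<\infty$ for every $m$ and every $Q\in M$, so integrability holds and $\{M_m,{\cal F}_m\}_{m=0}^\infty$ is a supermartingale relative to $M$.

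The key remaining step is to verify the hypothesis of Lemma \ref{l1}, namely $E^QM_m=M_0$ for all $m$ and all $Q\in M$. The upper bound $E^{P_i}M_m\leq M_0$ is the supermartingale property just proved. For the matching lower bound I would use the self-domination $M_m=\max\limits_{1\leq j\leq k}E^{P_j}\{\xi|{\cal F}_m\}\geq E^{P_i}\{\xi|{\cal F}_m\}$, which gives $E^{P_i}M_m\geq E^{P_i}E^{P_i}\{\xi|{\cal F}_m\}=E^{P_i}\xi=M_0$. Combining the two bounds yields $E^{P_i}M_m=M_0$ for each $i=\overline{1,k}$, and for an arbitrary $Q=\sum\limits_{i=1}^k\alpha_iP_i\in M$ linearity gives $E^QM_m=\sum\limits_{i=1}^k\alpha_iM_0=M_0$. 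Lemma \ref{l1} then upgrades the supermartingale to a martingale relative to $M$, completing the argument.

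The main obstacle is the supermartingale inequality $E^Q\{M_n|{\cal F}_m\}\leq M_m$ itself, because conditional expectation does not in general commute with the maximum over the measures $P_1,\ldots,P_k$; this is precisely where conditions $A$ and $B$ on the filtration and the measures are essential, and it is delivered through the chain of Lemmas \ref{lkq4}--\ref{q5}. Once that inequality is secured, the martingale property follows almost formally from self-domination together with Lemma \ref{l1}.
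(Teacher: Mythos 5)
Your proof is correct and follows essentially the same route as the paper: Lemma \ref{q5} supplies the supermartingale inequality $E^Q M_m \leq M_0$, the self-domination $M_m \geq E^{P_i}\{\xi|{\cal F}_m\}$ gives the matching lower bound $E^{P_i}M_m \geq E^{P_i}\xi = M_0$, and Lemma \ref{l1} converts the constant-expectation supermartingale into a martingale. The only cosmetic difference is that the paper writes the lower bound as $E^{P_s}[\max_i E^{P_i}\{\xi|{\cal F}_m\}] \geq \max_i E^{P_s}E^{P_i}\{\xi|{\cal F}_m\} \geq M_0$, whereas you select the $i=s$ term directly, which is the same estimate.
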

\begin{proof} Due to Lemma  \ref{q5} a random process $\{M_m=\sup\limits_{P\in M}E^P\{\xi|{\cal F}_m\}, {\cal F}_m\}_{m=0}^\infty $  is a supermartingale, that is, 
$$ E^P\{M_m | {\cal F}_{m-1}\} \leq M_{m-1}, \quad m=\overline{1, \infty}, \quad P \in M.$$
Or, $E^PM_m \leq M_0.$
From the other side
$$ E^{P_s} [ \max\limits_{1\leq i \leq k}E^{P_i}\{\xi|{\cal F}_m\}] \ge 
\max\limits_{1\leq i \leq k}E^{P_s}E^{P_i}\{\xi|{\cal F}_m\} \geq M_0, \quad s=\overline{1,k}.$$
 The above inequalities imply $E^{P_s}M_m= M_0, \  m=\overline{1, \infty}, \ s=\overline{1, k}.$
The last equalities lead to equalities $E^{P}M_m= M_0, \ m=\overline{1, \infty}, \ P \in M.$
The fact that $M_m$ is a supermartingale relative to the set of measures $M$ and the above equalities  prove Lemma \ref{1q5}.
\end{proof}

\begin{thm}\label{mars12}
 Let a filtration ${\cal F}_n$  and a set of equivalent measures $\{P_1, \ldots, P_k \}$ on a measurable space $\{ \Omega, {\cal F}\}$ satisfy conditions $A$ and $B,$ correspondingly. 
Suppose that $\xi$ is a nonnegative integrable  random value relative to this set of measures. If $\xi$ is  ${\cal F}_N$-measurable one for a certain $N<\infty,$ then
 a supermartingale $\{f_m, {\cal F}_m\}_{m=0}^\infty, $ where 
$$f_m=\sup\limits_{P \in M}E^P\{\xi| {\cal F}_m\}, \quad  m=\overline{1,\infty}, \quad   \max\limits_{1 \leq i \leq k}E^{P_i}\xi< \infty, $$
 is  local regular one  if and only if
\begin{eqnarray}\label{mars13}
E^{P_i}\xi=f_0, \quad i=\overline{1,k}.
\end{eqnarray}
\end{thm}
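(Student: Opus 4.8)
The plan is to reduce the statement to a question about genuine (not merely local) regularity, and then treat the two implications separately.

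First I would record the structural facts that make the problem tractable. By Lemma \ref{q2} one has $f_m=\max\limits_{1\leq i\leq k}E^{P_i}\{\xi|{\cal F}_m\}$, and since $\xi$ is ${\cal F}_N$-measurable we get $E^{P_i}\{\xi|{\cal F}_m\}=\xi$ for every $m\geq N$, so that $f_m=\xi$ for all $m\geq N$ and $f_0=\max\limits_{1\leq i\leq k}E^{P_i}\xi$. As $f$ is a nonnegative supermartingale, $0\leq E^Pf_m\leq f_0<\infty$, so each $f_m$ is integrable and $f$ lies in $F$; putting $\varphi=\xi+\sum\limits_{m=0}^{N-1}f_m$ gives $0\leq f_m\leq\varphi$ for all $m$ with $E^Q\varphi\leq\max\limits_{1\leq i\leq k}E^{P_i}\varphi=:T<\infty$ uniformly in $Q\in M$, so $f$ satisfies condition (\ref{buti1}). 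Finally, because $f_m=\xi$ for $m\geq N$, the stopped process $\{f_{m\wedge k_s}\}$ coincides with $f$ as soon as $k_s\geq N$; using any sequence $k_s\to\infty$ with $k_s\geq N$, this shows that $f$ is local regular if and only if $f$ itself is regular.

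For sufficiency, assume $E^{P_i}\xi=f_0$, $i=\overline{1,k}$. Since $f_0=\max\limits_{1\leq i\leq k}E^{P_i}\xi$, this is precisely the hypothesis of Lemma \ref{1q5}, which yields that $\{f_m=\sup\limits_{P\in M}E^P\{\xi|{\cal F}_m\}\}$ is a martingale relative to $M$. Writing the trivial representation $f_m=f_m-0$ as a martingale minus the zero non-decreasing process, Theorem \ref{hf1} applies and gives that $f$ is local regular.

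For necessity, assume $f$ is local regular, hence regular by the reduction above. Theorem \ref{ct4} then produces a maximal element $g^0=\{g_m^0\}_{m=0}^\infty$ and a martingale $\bar M$ with $f_m=\bar M_m-g_m^0$ and $E^{P_0}(f_m+g_m^0)=f_0$ for all $m$ and all $P_0\in M$. Choose $i^\ast$ attaining $f_0=\max\limits_{1\leq i\leq k}E^{P_i}\xi=E^{P_{i^\ast}}\xi$. For $m\geq N$ we have $f_m=\xi$, hence $E^{P_{i^\ast}}g_m^0=f_0-E^{P_{i^\ast}}\xi=0$; as $g_m^0\geq0$ and the measures are equivalent, $g_m^0=0$ almost everywhere for $m\geq N$, and monotonicity with $0=g_0^0\leq g_m^0\leq g_N^0=0$ forces $g_m^0=0$ for every $m$. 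Consequently $f_m=\bar M_m$ is a martingale, so $E^{P_i}f_m=f_0$ for all $m$, and taking $m\geq N$ gives $E^{P_i}\xi=f_0$, $i=\overline{1,k}$.

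The main obstacle I anticipate is the bookkeeping around the ${\cal F}_N$-measurability: one must be sure that the single family of equalities $E^{P_0}(f_m+g_m^0)=f_0$ from Theorem \ref{ct4}, evaluated at the extremal measure $P_{i^\ast}$ realizing the supremum defining $f_0$, suffices to annihilate the entire non-decreasing part. The clean point is that the supremum in $f_0=\max\limits_{1\leq i\leq k}E^{P_i}\xi$ is always attained at one of the generating measures, which turns the inequality $E^{P_0}\xi\leq f_0$ (valid for every $P_0$) into the exact cancellation $E^{P_{i^\ast}}g_m^0=0$; equivalence of the measures and monotonicity then upgrade this to $g^0\equiv0$. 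Checking that local regularity genuinely collapses to regularity here, so that Theorem \ref{ct4} is available, is the other step that must be stated with care.
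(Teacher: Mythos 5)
Your proof is correct and follows essentially the same route as the paper's: sufficiency via Lemma \ref{1q5} (the hypothesis $E^{P_i}\xi=f_0$ makes $f$ a martingale, hence local regular by Theorem \ref{hf1}), and necessity by taking the martingale-minus-nondecreasing decomposition supplied by (local) regularity, evaluating at the measure attaining $f_0=\max_i E^{P_i}\xi$, and using nonnegativity together with equivalence of the measures to annihilate the non-decreasing part. Your preliminary reduction (that stopping at $k_s\geq N$ leaves $f$ unchanged, so local regularity collapses to regularity and Theorem \ref{ct4} applies directly) is a slightly cleaner packaging of what the paper does by working with the stopped process for $n_s>N$, but it is not a different argument.
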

\begin{proof} { \bf The necessity.} Let $\{f_m, {\cal F}_m\}_{m=0}^\infty $  be a local regular supermartingale. Then there exists a sequence of nonrandom stopping times $\tau_s=n_s, \ s=\overline{1, \infty},$ such that for every $n_s$ there exists
$ \varphi=\sum\limits_{m=1}^{n_s}\sum\limits_{i=1}^{k}E^{P_i}\{\xi|{\cal F}_m\}$ satisfying inequalities
$$\max\limits_{1 \leq j \leq k}E^{P_j}\varphi \leq \sum\limits_{m=1}^{n_s}\sum\limits_{i=1}^{k}\max\limits_{1 \leq j \leq k}E^{P_j}
E^{P_i}\{\xi|{\cal F}_m \}\leq$$
$$\sum\limits_{m=1}^{n_s}\sum\limits_{i=1}^{k}\max\limits_{1 \leq j \leq k}E^{P_j} \max\limits_{1 \leq i \leq k} E^{P_i}\{\xi|{\cal F}_m\} \leq$$
$$ \sum\limits_{m=1}^{n_s}\sum\limits_{i=1}^{k}\max\limits_{1 \leq j \leq k}E^{P_j} \max\limits_{1 \leq i \leq k} E^{P_i} \xi =n_s k  \max\limits_{1 \leq i \leq k} E^{P_i} \xi,$$
$$\sup\limits_{P \in M}E^P\varphi \leq  \max\limits_{1 \leq j \leq k}E^{P_j}\varphi\leq n_s k  \max\limits_{1 \leq i \leq k} E^{P_i} \xi,$$
and nonnegative adapted   random process $\{\bar g_m^0\}_{m=0}^\infty, \ \bar g_0^0=0, \ $
$E^{P_i}\bar g_m^0< \infty, \ 0 \leq m \leq n_s$ such that 
$$ f_m +\sum\limits_{i=1}^m\bar g_i^0=\bar M_m, \quad E^{P}\bar M_m=f_0, \quad 0 \leq m \leq n_s, \quad P \in M.$$ 
If $n_s >N,$ then 
$$E^{P_i}(\xi+\sum\limits_{i=1}^N \bar g_i^0)=E^{P_i}\xi+ E^{P_i}\sum\limits_{i=1}^N\bar g_i^0=f_0.$$
But there exists $1 \leq  i_1 \leq k$ such that $E^{P_{i_1}}\xi=f_0.$ Therefore,
$E^{P_{i_1}}\sum\limits_{i=1}^Ng_i^0=0.$ Due to equivalence of measures $P_i, \ i=\overline{1,k},$ we obtain 
\begin{eqnarray}\label{mars14}
E^{P_i}\xi=f_0, \quad i=\overline{1,k},
\end{eqnarray}
 where $f_0=\sup\limits_{P \in M}E^P\xi.$

{\bf Sufficiency.} If conditions  (\ref{mars14}) are satisfied,  then $\bar M_m= \sup\limits_{P \in M}E^P\{\xi|{\cal F}_m\}$ is a martingale. The last implies local regularity of $\{f_m, {\cal F}_m\}_{m=0}^\infty.$
The Theorem \ref{mars12} is proved.
\end{proof}

\begin{thm}\label{mars5}
Let a filtration ${\cal F}_n$ on a measurable space $\{\Omega, {\cal F}\}$ satisfies condition $A $ and let $M$ be a convex set of equivalent measures on this measurable space.
 Suppose that
\begin{eqnarray*}\label{mn112}
P(A_s^n) >0, \quad P \in M,  \quad  s=\overline{1, \infty}, \quad   n=\overline{1, \infty},
\end{eqnarray*}
 and for a certain measure   $P_{ i_0} \in M $ the inequalities
\begin{eqnarray*}\label{mn112}
\frac{P(A_j^{n+1})}{P(A_s^n)} \leq \frac{P_{i_0}(A_j^{n+1})}{P_{i_0}(A_s^n)}, \quad i=\overline{1,k}, \quad A_j^{n+1} \subseteq A_s^n, \quad  A_s^n=\bigcup\limits_{j \in I_s^n}A_j^{n+1}, \quad  P \in M,
\end{eqnarray*}
are valid.
If  $G_0$ is a set of all integrable nonnegative random values $\xi$ satisfying conditions
\begin{eqnarray}\label{mars6}
E^P\xi=1, \quad P \in M,
\end{eqnarray}
 then the random process $\{ E^P\{\xi|{\cal F}_m\}, {\cal F}_m\}_{m=0}^\infty, \ \xi \in G_0, $  is a local regular supermartingale.
\end{thm}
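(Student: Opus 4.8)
The plan is to show that the process in the statement is in fact a martingale relative to $M$, and then to invoke the sufficiency criterion for local regularity already established. Write $f_m=\sup\limits_{P\in M}E^P\{\xi|{\cal F}_m\}$; this is the intended reading, since under conditions $A$ and $B$ the conditional expectations $E^P\{\xi|{\cal F}_m\}$ will turn out not to depend on the particular $P\in M$. Indeed, for any $P\in M$ the domination inequality of Lemma \ref{mq10}, applied to the pair $P,P_{i_0}$ (condition $B$ is inherited by every finite subcollection of $M$ containing $P_{i_0}$), together with Lemma \ref{q1} gives $E^P\{\xi|{\cal F}_m\}\le E^{P_{i_0}}\{\xi|{\cal F}_m\}$ for the nonnegative $\xi$. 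Taking the supremum over $P\in M$ and observing that $P_{i_0}$ attains it, we obtain $f_m=E^{P_{i_0}}\{\xi|{\cal F}_m\}$.

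First I would establish that $\{f_m,{\cal F}_m\}_{m=0}^\infty$ is a supermartingale relative to $M$. For a fixed $Q\in M$ I would apply Lemma \ref{q5} to the finite family $\{P_{i_0},Q\}$: since the supremum over the convex hull of $P_{i_0}$ and $Q$ again equals $E^{P_{i_0}}\{\xi|\cdot\}=f$, the lemma yields $E^{Q}\{f_n|{\cal F}_m\}\le f_m$ for $n>m$. As $Q\in M$ is arbitrary, $f$ is an $M$-supermartingale; the case of an unbounded integrable $\xi$ is reduced to the bounded one by the monotone approximation already used in the proof of Lemma \ref{lkq4}.

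Next I would upgrade the supermartingale to a martingale using the hypothesis $E^P\xi=1,\ P\in M$. From $f_m\ge E^{Q}\{\xi|{\cal F}_m\}$ one gets the lower bound $E^{Q}f_m\ge E^{Q}E^{Q}\{\xi|{\cal F}_m\}=E^{Q}\xi=1$, while the supermartingale property gives the matching upper bound $E^{Q}f_m\le f_0=\sup\limits_{P\in M}E^{P}\xi=1$. Hence $E^{Q}f_m=f_0$ for all $m$ and all $Q\in M$, and Lemma \ref{l1} together with Remark \ref{rem1} shows that $f$ is a martingale relative to $M$. This is exactly the convex-set version of Lemma \ref{1q5}.

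Finally, the conclusion is immediate. The martingale $f_m=M_m$ is nonnegative with $E^{P}f_m=1$, so $\sup\limits_{P\in M}E^P|f_m|=1<\infty$ and $f$ belongs to the class $F$. Writing the trivial decomposition $f_m=M_m-g_m^0$ with the martingale $M_m=f_m$ and the identically zero non-decreasing process $g^0_m\equiv 0$, Theorem \ref{hf1} applies and yields that $\{f_m,{\cal F}_m\}_{m=0}^\infty$ is a local regular supermartingale. I expect the only genuine work to lie in the two technical passages: transferring Lemmas \ref{q5}, \ref{1q5} and \ref{mq10} from the finite families for which they are stated to the whole convex set $M$ (handled by restricting to finite subcollections that contain $P_{i_0}$), and removing the boundedness assumption on $\xi$ by monotone approximation.
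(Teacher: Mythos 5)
Your proposal is correct and lands on the same overall strategy as the paper: show that the process is in fact a martingale relative to all of $M$ and then feed the trivial decomposition ($g^0\equiv 0$) into the local-regularity criterion (you cite Theorem \ref{hf1}; the paper cites Theorem \ref{mars1}, which itself reduces to Theorem \ref{hf1}, so this is immaterial). Where you differ is in how the martingale property is obtained. The paper never explicitly identifies the supremum: it works with finite convex subfamilies $M_n\ni P_{i_0}$ and $M_n^{P_0}$, applies Lemma \ref{1q5} to each, and then squeezes ($\bar M_m\leq \bar M_m^{P_0}$ with equal expectations, hence a.s.\ equality) to show that all the suprema and all the conditional expectations $E^{P_0}\{\xi|{\cal F}_m\}$ coincide. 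You instead use Lemma \ref{mq10} together with Lemma \ref{q1} to prove directly that $E^{P}\{\xi|{\cal F}_m\}\leq E^{P_{i_0}}\{\xi|{\cal F}_m\}$ for every $P\in M$, so the supremum is attained at $P_{i_0}$; this shortcut is legitimate (the pair $\{P,P_{i_0}\}$ satisfies condition $B$ with distinguished index $i_0$ by hypothesis, and $\xi\geq 0$ lets you multiply the pointwise inequality of Lemma \ref{mq10} through the conditional expectation) and it bypasses the finite-subfamily exhaustion entirely. One small loose end: the theorem's conclusion concerns $E^{P}\{\xi|{\cal F}_m\}$ for each $P\in M$, not only the supremum $f_m=E^{P_{i_0}}\{\xi|{\cal F}_m\}$, so you should add the one-line observation that $E^{P}\bigl(E^{P_{i_0}}\{\xi|{\cal F}_m\}-E^{P}\{\xi|{\cal F}_m\}\bigr)=1-1=0$ together with nonnegativity of the integrand forces $E^{P}\{\xi|{\cal F}_m\}=f_m$ a.s.; this is exactly the same squeeze the paper performs for $E^{P_0}\{\xi|{\cal F}_m\}\leq\bar M_m^{P_0}$, and with it your argument is complete.
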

\begin{proof}  Let $P_1, \ldots, P_n$ 
be a certain subset of measures from $M$ containing the measure $P_{i_0}.$ Denote by
$M_n$  a convex set of equivalent measures 
\begin{eqnarray}\label{mars8}
M_n = \{P \in M, \  P=\sum\limits_{i=1}^n\alpha_i P_i, \  \alpha_i \geq 0, \ i=\overline{1,n}, \ \sum\limits_{i=1}^n\alpha_i=1\}.
\end{eqnarray}
Due to Lemma \ref{1q5} $\{\bar M_m\}_{m=0}^\infty$ is a  martingale relative
to the set of measures $M_n,$ where $ \bar M_m=\sup\limits_{P \in M_n}E^P\{\xi|{\cal F}_m\}, \ \xi \in G_0.$  Let us consider an arbitrary measure $P_0 \in M$ and let
\begin{eqnarray}\label{mars9}
M_n^{P_0} = \{P \in M, \ P= \sum\limits_{i=0}^n\alpha_i P_i, \  \alpha_i \geq 0, \  i=\overline{0,n}, \ \sum\limits_{i=0}^n\alpha_i=1\}.
\end{eqnarray}
Then  $\{\bar M_m^{P_0}\}_{m=0}^\infty,$ where $\bar M_m^{P_0}=\sup\limits_{P \in M_n^{P_0}}E^P\{\xi|{\cal F}_m\},$ is a martingale relative to the set of measures $M_n^{P_0}.$ It is evident that
\begin{eqnarray}\label{mars10}
\bar M_m \leq \bar M_m^{P_0}, \quad   m=\overline{0, \infty}.
\end{eqnarray}
Since $E^P\bar M_m=E^P\bar M_m^{P_0}=1, \ m=\overline{0, \infty}, \ P \in M_n,$ the inequalities (\ref{mars10}) give $\bar M_m=\bar M_m^{P_0}.$ Analogously,
$E^{P_0}\{\xi|{\cal F}_m\} \leq \bar M_m^{P_0}.$ From equalities 
$ E^{P_0}E^{P_0}\{\xi|{\cal F}_m\}$ $ = E^{P_0}\bar M_m^{P_0}=1$ we obtain
$E^{P_0}\{\xi|{\cal F}_m\} = \bar M_m^{P_0}=\bar M_m.$ 
Since the measure $P_0$ is arbitrary it implies that $E^P\{\xi|{\cal F}_m\}, \   m=\overline{0, \infty},$ is a martingale relative to all measures from $M.$
Due to Theorem  \ref{mars1}, it is a local regular supermartingale with random process $\bar g^0_m=0,  m=\overline{0, \infty}. $
The Theorem \ref{mars5} is proved.
\end{proof}

\begin{thm}\label{mmars1}
Let a filtration ${\cal F}_n$ on a measurable space $\{\Omega, {\cal F}\}$ satisfies condition $A $ and let $M$ be a convex set of equivalent measures on this measurable space.
 Suppose that 
\begin{eqnarray*}\label{mn111}
 P(A_s^n) >0, \quad P \in M,   \quad  s=\overline{1, \infty},  \quad n=\overline{1, \infty},
\end{eqnarray*}
and for a certain measure   $P_{ i_0} \in M $ the inequalities
\begin{eqnarray*}\label{mn111}
\frac{P(A_j^{n+1})}{P(A_s^n)} \leq \frac{P_{i_0}(A_j^{n+1})}{P_{i_0}(A_s^n)}, \quad i=\overline{1,k}, \quad A_j^{n+1} \subseteq A_s^n, \quad A_s^n=\bigcup\limits_{j \in I_s^n}A_j^{n+1}, \quad P \in M,
\end{eqnarray*}
are valid.
If $\{f_m, {\cal F}_m\}_{m=0}^\infty$ is an adapted random process  satisfying conditions
\begin{eqnarray}\label{mmars2}
f_m \leq f_{m-1}, \quad  E^P\xi|f_m| <\infty, \quad P \in M \quad m=\overline{1, \infty}, \quad  \xi \in G_0,
\end{eqnarray}
where $G_0=\{\xi \geq 0, E^P\xi=1, P \in M\},$ 
 then the random process
\begin{eqnarray}\label{mmars3}
 \{ f_mE^P\{\xi|{\cal F}_m\}, {\cal F}_m\}_{m=0}^\infty, \quad  P \in M,
\end{eqnarray}
is a local regular supermartingale relative to all measures from $M.$
\end{thm}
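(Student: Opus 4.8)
The decisive preliminary fact is Theorem~\ref{mars5}: under conditions $A$ and $B$ the process $\bar M_m=E^P\{\xi|\mathcal{F}_m\}$ (with $\xi\in G_0$, so $E^P\xi=1$) is a martingale relative to every measure of $M$ and, what is essential here, it is literally one and the same process for all $P\in M$. Hence the random process (\ref{mmars3}) does not depend on the choice of $P$, and I denote it by $X_m=f_m\bar M_m$. The plan is to bring $X$ under Theorem~\ref{mars1}: I will exhibit a nonnegative adapted process $\bar g^0=\{\bar g_m^0\}_{m=0}^\infty$, $\bar g_0^0=0$, not depending on $P$, for which
\[
X_{m-1}-E^P\{X_m|\mathcal{F}_{m-1}\}=E^P\{\bar g_m^0|\mathcal{F}_{m-1}\},\quad m=\overline{1,\infty},\quad P\in M,
\]
which is precisely hypothesis (\ref{mars2}) written for the supermartingale $X$, and then to quote Theorem~\ref{mars1}.

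Everything rests on a tower-property identity. Since $f_m$ is $\mathcal{F}_m$-measurable and $\bar M_m=E^P\{\xi|\mathcal{F}_m\}$, for each $P\in M$
\[
E^P\{X_m|\mathcal{F}_{m-1}\}=E^P\{f_mE^P\{\xi|\mathcal{F}_m\}|\mathcal{F}_{m-1}\}=E^P\{f_m\xi|\mathcal{F}_{m-1}\},
\]
while pulling out the $\mathcal{F}_{m-1}$-measurable factor gives $X_{m-1}=f_{m-1}\bar M_{m-1}=E^P\{f_{m-1}\xi|\mathcal{F}_{m-1}\}$. Subtracting and using the monotonicity $f_m\leq f_{m-1}$ together with $\xi\geq 0$ yields
\[
X_{m-1}-E^P\{X_m|\mathcal{F}_{m-1}\}=E^P\{(f_{m-1}-f_m)\xi|\mathcal{F}_{m-1}\}\geq 0,
\]
so $X$ is already a supermartingale relative to every $P\in M$. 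I then set $\bar g_m^0=(f_{m-1}-f_m)\bar M_m$, which is nonnegative, adapted and independent of $P$; one further application of the tower property (now to the $\mathcal{F}_m$-measurable factor $f_{m-1}-f_m$) gives $E^P\{\bar g_m^0|\mathcal{F}_{m-1}\}=E^P\{(f_{m-1}-f_m)\xi|\mathcal{F}_{m-1}\}$, which is exactly the identity sought. In the language of Theorem~\ref{mars1} this makes $X_m+\sum_{i=1}^m\bar g_i^0$ a martingale relative to each $P\in M$.

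It remains to verify the quantitative hypotheses of Theorem~\ref{mars1}: that $E^P\bar g_m^0<\infty$ and that $X$ lies in the class $F$. The finiteness of $E^P\bar g_m^0$ and of $E^P|X_m|=E^P\{|f_m|\bar M_m\}=E^P\{|f_m|\xi\}$ is immediate from the standing assumption (\ref{mmars2}) via the same tower identity. The point that genuinely needs care — and which I expect to be the main obstacle — is the \emph{uniform} bound $\sup_{P\in M}E^P|X_m|<\infty$ required for membership in $F$; for this I would exploit the domination of every $P\in M$ by the distinguished measure $P_{i_0}$ supplied by condition $B$ (the convex-set analogue of Lemma~\ref{mq10}) in order to control $E^P\{|f_m|\xi\}$ through the single $P_{i_0}$-expectation. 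Once class $F$ is secured, Theorem~\ref{mars1} applies word for word and delivers the local regularity of $\{f_mE^P\{\xi|\mathcal{F}_m\}\}_{m=0}^\infty$, which is the assertion of Theorem~\ref{mmars1}.
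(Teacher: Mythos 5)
Your argument is essentially the paper's own proof: both rest on Theorem \ref{mars5} to make $E^P\{\xi|{\cal F}_m\}$ a single martingale for all $P\in M$, both compute $X_{m-1}-E^P\{X_m|{\cal F}_{m-1}\}=E^P\{(f_{m-1}-f_m)E^P\{\xi|{\cal F}_m\}|{\cal F}_{m-1}\}$, choose the identical $\bar g_m^0=(f_{m-1}-f_m)E^P\{\xi|{\cal F}_m\}$ with the identical bound $E^P\bar g_m^0\leq E^P\xi(|f_{m-1}|+|f_m|)$, and conclude via Theorem \ref{mars1}. The one point where you go beyond the paper is in flagging the class-$F$ requirement $\sup_{P\in M}E^P|f_mE^P\{\xi|{\cal F}_m\}|<\infty$ of Theorem \ref{mars1}; the paper passes over this in silence, so your concern is legitimate rather than a defect of your proof relative to the source.
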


\begin{proof} Due to Theorem  \ref{mars5}, the random process 
$\{ E^P\{\xi|{\cal F}_m\}, {\cal F}_m\}_{m=0}^\infty$ is a martingale relative to all measures from $M.$ Therefore,
\begin{eqnarray*}
f_{m-1}E^P\{\xi|{\cal F}_{m-1}\} - E^P\{ f_m E^P\{\xi|{\cal F}_m\}|{\cal F}_{m-1}\}=
\end{eqnarray*}
\begin{eqnarray}\label{mmars4}
E^P\{ (f_{m-1} - f_m) E^P\{\xi|{\cal F}_m\}|{\cal F}_{m-1}\}, \quad m=\overline{1, \infty}.
\end{eqnarray}
So, if to put  $\bar g_m^0= (f_{m-1} - f_m) E^P\{\xi|{\cal F}_m\}, \ m=\overline{1, \infty}, $ then $\bar g_m^0 \geq 0$ and  it is  ${\cal F}_m$-measurable and
$E^P\bar g_m^0 \leq E^P\xi(|f_{m-1}|+|f_m|)< \infty.$ It proves the needed statement.
\end{proof}

\begin{cor} If $f_m=\alpha, \ m=\overline{1, \infty}, \ \alpha \in R^1,$ then 
$\{\alpha E^P\{\xi |{\cal F}_m\}\}_{m=0}^\infty$ is a local regular supermartingale. If $\xi =1,$ then $\{f_m\}_{m=0}^\infty$ is also a local regular supermartingale.
\end{cor}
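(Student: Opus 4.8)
The plan is to derive both assertions directly from Theorem \ref{mmars1}, with no new estimates required: the entire task reduces to checking that the two degenerate choices of the data $(f_m,\xi)$ fall within the hypotheses of that theorem. Throughout I would keep the filtration ${\cal F}_n$ and the convex set $M$ fixed so that conditions $A$ and $B$ (and the domination by $P_{i_0}$) assumed in Theorem \ref{mmars1} remain in force; the only variable inputs are the adapted process and the density $\xi\in G_0$.

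For the first assertion I would take the adapted process to be constant, $f_m=\alpha$ for all $m=\overline{0,\infty}$, together with an arbitrary fixed $\xi\in G_0$. A constant sequence is trivially non-increasing, so the monotonicity requirement $f_m\leq f_{m-1}$ in (\ref{mmars2}) holds with equality for every $m=\overline{1,\infty}$; and since $\xi\in G_0$ satisfies $E^P\xi=1$, the integrability requirement follows from $E^P\xi|f_m|=|\alpha|\,E^P\xi=|\alpha|<\infty$ for every $P\in M$. Theorem \ref{mmars1} then states that the process $\{f_mE^P\{\xi|{\cal F}_m\}\}_{m=0}^\infty$ is a local regular supermartingale relative to all measures from $M$, and with $f_m\equiv\alpha$ this process is exactly $\{\alpha E^P\{\xi|{\cal F}_m\}\}_{m=0}^\infty$, which is the claim. (One may note in passing that, by Theorem \ref{mars5}, $E^P\{\xi|{\cal F}_m\}$ is in fact a martingale independent of $P$, so the resulting object is $\alpha$ times a martingale and is local regular for every sign of $\alpha\in R^1$.)

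For the second assertion I would instead keep a general non-increasing adapted $\{f_m\}_{m=0}^\infty$ obeying (\ref{mmars2}) and specialize $\xi=1$. Here $\xi=1\geq 0$ and $E^P\xi=E^P1=1$ for every $P\in M$, so $\xi=1$ is an admissible element of $G_0=\{\xi\geq 0,\ E^P\xi=1,\ P\in M\}$. Because $E^P\{1|{\cal F}_m\}=1$ almost surely, the process furnished by Theorem \ref{mmars1} collapses to $\{f_mE^P\{\xi|{\cal F}_m\}\}_{m=0}^\infty=\{f_m\}_{m=0}^\infty$, giving local regularity of $\{f_m\}_{m=0}^\infty$ itself. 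There is essentially no obstacle in either case beyond this verification of hypotheses; the only points deserving mention are that a constant process counts as non-increasing and that the constant density $\xi=1$ lies in $G_0$, both of which are immediate.
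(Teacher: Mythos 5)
Your proposal is correct and matches the paper's intent exactly: the corollary is stated as an immediate consequence of Theorem \ref{mmars1}, obtained by specializing to the constant (hence non-increasing) process $f_m=\alpha$ with arbitrary $\xi\in G_0$, and to $\xi=1\in G_0$ with a general admissible $\{f_m\}_{m=0}^\infty$, which is precisely what you verify. Your parenthetical remark that the sign of $\alpha$ is irrelevant because only monotonicity of $f_m$ is required is a correct and worthwhile observation.
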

Denote by $F_0$ the set of adapted processes
\begin{eqnarray*}\label{mmars5}
F_0=\{ f=\{f_m\}_{m=0}^\infty,  \   P(|f_m| <\infty) =1, \ P \in M, \ f_m \leq f_{m-1}, \  m=\overline{1, \infty}\}.
\end{eqnarray*}
For every $\xi \in G_0$ let us introduce the set of adapted processes
$$ L_{\xi}=$$
\begin{eqnarray*}\label{mmars6}
\{\bar f=\{f_mE^P\{\xi|{\cal F}_m\}\}_{m=0}^\infty, \  \{f_m\}_{m=0}^\infty \in F_0, \   E^P\xi|f_m| <\infty, \ P \in M, \ m=\overline{1, \infty}\},
\end{eqnarray*}
and 
\begin{eqnarray*}\label{mmars7}
V=\bigcup\limits_{\xi \in G_0}L_{\xi}.
\end{eqnarray*}

\begin{cor} Every random adapted process from the set $K,$ where
\begin{eqnarray*}\label{mmars8}
K=\left \{ \sum\limits_{i=1}^mC_i \bar f_i, \ \bar f_i \in V, \  C_i \geq 0, \ i=\overline{1, m}, \ m=\overline{1, \infty}\right\}, 
\end{eqnarray*}
 is a local regular supermartingale.
\end{cor}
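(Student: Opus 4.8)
The plan is to exhibit for the process $h=\sum_{i=1}^{r}C_i\bar f_i$ (with $r<\infty$ summands, written this way so as to free the letter $m$ for the time index) a single nonnegative adapted increment process fulfilling the one-step identity (\ref{mars2}), and then to invoke Theorem \ref{mars1}. The decisive conceptual point is that the criterion for local regularity supplied by Theorem \ref{mars1} is stable under nonnegative linear combinations, so that $K$ is simply the convex cone generated by $V$ inside the local regular supermartingales.

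First I would recall from Theorem \ref{mmars1} and its proof that every generator $\bar f_i\in V$ is a local regular supermartingale of the form $\bar f_i=\{f_{i,m}\,E^{P}\{\xi_i|{\cal F}_m\}\}_{m=0}^{\infty}$ with $\{f_{i,m}\}_{m=0}^{\infty}\in F_0$ and $\xi_i\in G_0$; by Theorem \ref{mars5} the factor $E^{P}\{\xi_i|{\cal F}_m\}$ does not depend on $P\in M$, so each $\bar f_i$ is an unambiguously defined adapted process. Moreover the proof of Theorem \ref{mmars1} furnishes the explicit nonnegative adapted increment
$$\bar g_{i,m}^{0}=(f_{i,m-1}-f_{i,m})\,E^{P}\{\xi_i|{\cal F}_m\}\ge 0,\qquad E^{P}\bar g_{i,m}^{0}<\infty,\qquad P\in M,$$
for which
$$\bar f_{i,m-1}-E^{P}\{\bar f_{i,m}|{\cal F}_{m-1}\}=E^{P}\{\bar g_{i,m}^{0}|{\cal F}_{m-1}\},\qquad m=\overline{1,\infty},\qquad P\in M.$$

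Next I would put $h_m=\sum_{i=1}^{r}C_i\bar f_{i,m}$ and $\bar G_m^{0}=\sum_{i=1}^{r}C_i\bar g_{i,m}^{0}$. Since $C_i\ge 0$ and each $\bar g_{i,m}^{0}\ge 0$, the process $\bar G^{0}=\{\bar G_m^{0}\}_{m=0}^{\infty}$ is nonnegative and adapted with $\bar G_0^{0}=0$ and $E^{P}\bar G_m^{0}=\sum_{i=1}^{r}C_iE^{P}\bar g_{i,m}^{0}<\infty$ for every $P\in M$. Linearity of the conditional expectation then makes the per-generator identities add up to
$$h_{m-1}-E^{P}\{h_m|{\cal F}_{m-1}\}=\sum_{i=1}^{r}C_i\bigl(\bar f_{i,m-1}-E^{P}\{\bar f_{i,m}|{\cal F}_{m-1}\}\bigr)=E^{P}\{\bar G_m^{0}|{\cal F}_{m-1}\},\qquad P\in M,$$
which is exactly hypothesis (\ref{mars2}). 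Because the right-hand side is nonnegative, this identity also yields $E^{P}\{h_m|{\cal F}_{m-1}\}\le h_{m-1}$ and hence, by the tower property, the full supermartingale property of $h$ relative to $M$; and $\sup_{P\in M}E^{P}|h_m|\le\sum_{i=1}^{r}C_i\sup_{P\in M}E^{P}|\bar f_{i,m}|<\infty$ places $h$ in the class $F$, each term being finite since $\bar f_i$ is itself a member of $F$.

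Finally I would apply Theorem \ref{mars1} to $h$ with the increment process $\bar G^{0}$, concluding that $h$ is a local regular supermartingale and hence that every element of $K$ is one. I expect no essential obstacle: the conceptual step is immediate from the additivity of the decomposition, and the only matter needing care is the routine integrability bookkeeping, namely that $h\in F$ and that $\bar G^{0}$ has finite $P$-expectations, both of which are inherited summand by summand from Theorem \ref{mmars1}.
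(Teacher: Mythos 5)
Your proof is correct and is exactly the argument the paper has in mind: the paper's own proof of this corollary is literally ``The proof is evident,'' and what it leaves implicit is precisely your observation that the criterion of Theorem \ref{mars1} is additive, so the per-generator increments $\bar g_{i,m}^{0}=(f_{i,m-1}-f_{i,m})E^{P}\{\xi_i|{\cal F}_m\}$ supplied by the proof of Theorem \ref{mmars1} combine with the nonnegative weights $C_i$ into a single admissible increment process for the sum. Your integrability bookkeeping follows the paper's own (tacit) conventions, so nothing further is needed.
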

\begin{proof} The proof is evident.
\end{proof}

\begin{thm}\label{mmars9}
Let a filtration ${\cal F}_n$ on a measurable space $\{\Omega, {\cal F}\}$ satisfies condition $A $ and let $M$ be a convex set of equivalent measures on this measurable space.
Suppose that 
\begin{eqnarray*}\label{mn115}
  P(A_s^n) >0, \quad P \in M,   \quad  s=\overline{1, \infty}, \quad  n=\overline{1, \infty},
\end{eqnarray*}
and  for a certain measure   $P_{ i_0} \in M $ the inequalities 
 \begin{eqnarray*}\label{mn115}
\frac{P(A_j^{n+1})}{P(A_s^n)} \leq \frac{P_{i_0}(A_j^{n+1})}{P_{i_0}(A_s^n)}, \quad i=\overline{1,k}, \quad A_j^{n+1} \subseteq A_s^n, \quad A_s^n=\bigcup\limits_{j \in I_s^n}A_j^{n+1}, \quad P \in M,
\end{eqnarray*}
are valid.
If $\{f_m\}_{m=0}^\infty$ is a nonnegative uniformly integrable supermartingale relative to the set of measures from $M$, then 
the necessary and sufficient conditions for it  to be a local regular one is belonging it to the set $K.$
\end{thm}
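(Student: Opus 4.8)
The plan is to prove the two implications separately, and the sufficiency is essentially already in hand. Indeed, if $\{f_m\}_{m=0}^\infty$ belongs to $K$, then by the Corollary establishing that every adapted process in $K$ is a local regular supermartingale, nothing more is required in that direction. The whole work lies in the necessity: assuming $\{f_m\}_{m=0}^\infty$ is a nonnegative uniformly integrable local regular supermartingale, I must exhibit it as a finite nonnegative combination of elements of $V$.

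First I would extract from local regularity a single nonnegative adapted process $\bar g^0=\{\bar g_m^0\}_{m=0}^\infty$, $\bar g_0^0=0$, such that
$$f_{m-1}-E^P\{f_m|{\cal F}_{m-1}\}=E^P\{\bar g_m^0|{\cal F}_{m-1}\}, \quad m=\overline{1,\infty}, \quad P\in M.$$
To do this, fix the stopping times $\tau_{k_s}=k_s$ from the definition of local regularity; each stopped process $f^{\tau_{k_s}}$ is regular, so by Theorem \ref{reww1} it admits a nonnegative adapted process whose one-step conditional increments equal $f_{m-1}-E^P\{f_m|{\cal F}_{m-1}\}$ for $m\le k_s$. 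Since this right-hand side does not depend on $s$, choosing for each $m$ the increment coming from any $k_s\ge m$ produces a consistent global $\bar g^0$. I note that $\{f_m\}$ lies in the class $F$, because $E^P|f_m|=E^Pf_m\le f_0$ for every $P\in M$, so Theorem \ref{reww1} is applicable to the stopped processes, which satisfy (\ref{buti1}).

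Next, set $g_m=\sum_{i=1}^m\bar g_i^0$ and $M_m=f_m+g_m$. A direct computation with the displayed identity shows $E^P\{M_m|{\cal F}_{m-1}\}=M_{m-1}$, so $\{M_m\}$ is a martingale relative to $M$, and it is nonnegative since $f_m\ge 0$ and $g_m\ge 0$. Uniform integrability of $M$ follows from that of $f$: the process $g$ is nondecreasing with $E^Pg_m=M_0-E^Pf_m\le f_0$, hence $g_m\uparrow g_\infty$ with $E^Pg_\infty<\infty$, so both summands are uniformly integrable. Consequently $M_m\to M_\infty=f_\infty+g_\infty$ almost surely and in $L^1$, and $M_m=E^P\{M_\infty|{\cal F}_m\}$; in particular $M_0=E^PM_\infty$ for every $P\in M$. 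Putting $\xi=M_\infty/M_0$ (the case $M_0=f_0=0$ forces $f\equiv 0\in K$ and is trivial), one has $\xi\ge 0$ and $E^P\xi=1$, that is $\xi\in G_0$, and $M_m=M_0\,E^P\{\xi|{\cal F}_m\}$.

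It then remains to read off the membership in $K$ from $f_m=M_0\,E^P\{\xi|{\cal F}_m\}-g_m$. The first summand is $M_0$ times $\{1\cdot E^P\{\xi|{\cal F}_m\}\}\in L_\xi\subseteq V$, taking the nonincreasing factor identically $1\in F_0$. The second summand is $\{(-g_m)\cdot E^P\{1|{\cal F}_m\}\}\in L_1\subseteq V$, since $\{-g_m\}$ is nonincreasing and finite, hence belongs to $F_0$, and $1\in G_0$. Thus $f=M_0\bar f^{(1)}+1\cdot\bar f^{(2)}$ with $\bar f^{(1)},\bar f^{(2)}\in V$ and nonnegative coefficients, so $f\in K$. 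The main obstacle I anticipate is the first step: reconciling the processes furnished by Theorem \ref{reww1} for the different stopping times into one globally defined $\bar g^0$, and then verifying the uniform integrability of $g$ needed to pass to $M_\infty$ and to normalize $\xi$ into $G_0$; once these are secured, the assembly into $K$ is routine.
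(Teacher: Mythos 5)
Your proposal is correct and follows essentially the same route as the paper: obtain the Doob-type decomposition $f_m=M_m-\sum_{i=1}^m\bar g_i^0$ from local regularity, use uniform integrability and monotone convergence of $g_m$ to close the martingale by $M_\infty=f_\infty+g_\infty$, set $\xi=M_\infty/f_0\in G_0$, and split $f$ as an element of $L_\xi$ plus an element of $L_1$. The only differences are that you spell out details the paper glosses over (patching the processes from the stopped supermartingales into one global $\bar g^0$, and the trivial case $f_0=0$), which is fine.
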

\begin{proof}
{\bf Necessity.}  It is evident  if  $\{f_m\}_{m=0}^\infty$ belongs to $K$ then it is a local regular supermartingale.

{\bf Sufficiency.} Suppose that  $\{f_m\}_{m=0}^\infty$ is a local regular supermartingale. Then there exists  nonnegative adapted process $\{\bar g_m^0\}_{m=1}^ \infty,  \ E^P\bar g_m^0< \infty, \ m=\overline{1, \infty}, $  and a martingale  $\{M_m\}_{m=0}^ \infty,$
such that 
\begin{eqnarray*}\label{mmars8}
f_m=M_m - \sum\limits_{i=1}^m\bar g_i^0, \quad  m=\overline{0, \infty}. 
\end{eqnarray*}
Then $M_m \geq 0, \ m=\overline{0, \infty}, \ E^P M_m <\infty, \ P\in M.$
Since $0< E^PM_m=f_0< \infty$ we have $E^P\sum\limits_{i=1}^m\bar g_i^0< f_0.$ Let us put $g_{\infty}=\lim\limits_{m \to \infty}\sum\limits_{i=1}^m\bar g_i^0.$
Using uniform integrability of $f_m$ we can pass to the limit in the equality
$$ E^P(f_m +\sum\limits_{i=1}^m\bar g_i^0)=f_0, \quad P \in M,$$
as $m \to \infty$.
Passing to the limit in the last equality, as $m \to \infty,$  we obtain
$$E^P(f_\infty +g_{\infty})=f_0.$$
Introduce into consideration a  random value $\xi=\frac{f_\infty +g_{\infty}}{f_0}.$
Then $E^P\xi=1, \ P \in M.$   From here we obtain that  $\xi \in G_0$ and 
$$M_m=f_0E^P\{\xi|{\cal F}_m\}, \ m=\overline{0, \infty}.$$
 Let us put $\bar f_m^2=-\sum\limits_{i=1}^m\bar g_i^0. $ It is easy to see that an adapted random process $\bar f_2=\{\bar f_m^2\}_{m=0}^\infty$ belongs to $F_0.$ Therefore,
for the supermartingale $f=\{ f_m\}_{m=0}^\infty$ the representation 
$$f=\bar f_1+ \bar f_2,$$
is valid, where $\bar f_1=\{f_0E^P\{\xi|{\cal F}_m\}\}_{m=0}^ \infty$  belongs to $L_{\xi}$
with  $ \xi = \frac{f_\infty +g_{\infty}}{f_0}$  and $ f_m^1=f_0, \ m=\overline{0,\infty}.$ The same is valid for $\bar f_2$ with $\xi=1.$ This implies that $f$ belongs to $K.$ The Theorem 
\ref{mmars9} is proved.
\end{proof}

Below we present some results nedeed for the description of the set $G_0.$ We consider the case, as conditions of the Theorems \ref{mars5}, \ref{mmars1}, \ref{mmars9} are valid. 
Let us consider the set of equations for a certain fixed $n\geq 1$
 \begin{eqnarray}\label{map1}
 \sum\limits_{j=1}^{\infty} P_i(A_j^n)\xi_j=1, \quad i =\overline{1,k}.
\end{eqnarray} 
If there exists nonnegative solution $\{\xi_j\}_{j=1}^\infty$ of the set of equations
(\ref{map1}), then the  random value
$\xi=\sum \limits_{j=1}^\infty\xi_j\chi_{A_j^n}$ is ${\cal F}_n$-measurable and belongs to the set $G_0.$

If to put $a_j=\{P_i(A_j^n)\}_{i=1}^k, \ j=\overline{1, \infty},$ then the set of equations (\ref{map1})   can be written in the form 
 \begin{eqnarray}\label{mapp1}
 \sum\limits_{j=1}^{\infty} a_j \xi_j=a_0
\end{eqnarray}
 with the vector $a_0=\{e_i\}_{i=1}^k, \  e_i=1, \ i=\overline{1,k}.$
It is evident that homogeneous set of equations
 \begin{eqnarray}\label{maapp1}
 \sum\limits_{j=1}^{\infty} a_j \xi_j=0
\end{eqnarray}
has always a bounded nonzero solution. Then if to denote it by $u=\{u_j\}_{j=1}^\infty,$ then due to  boundedness of this solution, that is, $|u_j|\leq C < \infty, \ j=\overline{1, \infty},$ there exists a real number $t>0$ such that $ \xi_j=1- t u_j \geq 0, \ j=\overline{1, \infty}.$  Such a vector $\{ \xi_j\}_{j=1}^\infty$ is a nonhomogeneous nonnegative solution to the set of equations (\ref{mapp1}).

Bellow we prove  Theorem \ref{ttt9} helping  us to describe strictly positive solutions of the set equations (\ref{mapp1}).
\begin{defin} A vector $a_0 \in R_+^k$ belongs to the  interior of the nonnegative cone generated by vectors $a_j \in R_+^k, \ j=\overline{1, \infty},$ if there exist positive numbers $\alpha_j>0, j=\overline{1, \infty},$ such that  
\begin{eqnarray}\label{ap1}
 \sum\limits_{j=1}^{\infty}\alpha_j a_j=a_0.
\end{eqnarray} 
\end{defin}

The next Theorem generalizes a Theorem from  \cite{Gonchar2} and describes all strictly positive solutions to the set of equations (\ref{mapp1}). 
\begin{thm}\label{ttt9} 
Let a vector  $a_0$  belongs to the interior of the cone generated by vectors $a_j \in R^k, \ j=\overline{1, \infty},$ 
were dimension of the  cone is    $ 1 \leq r \leq k,$ and let $r$ linear independent  vectors   $a_1, \ldots,a_{r}$ be  such that  the vector
 $a_0$  belongs to the  interior of the cone generated by these vectors. Then there  exists infinite number of  linear independent  nonnegative solutions    $z_i, \ \ i=\overline{r, \infty},$ of the set of equations (\ref{mapp1}), where
 $$ z_{r}=\{\left\langle a_0, f_1 \right\rangle, \ldots, \left\langle a_0, f_r \right\rangle, 0, 0, \ldots,\},   $$
$$ z_{i}=\{\left\langle a_0, f_1 \right\rangle - \left\langle a_i, f_1 \right\rangle y_i^*, \ldots, \left\langle a_0, f_r \right\rangle - \left\langle a_i, f_r \right\rangle y_i^*, 0, \ldots, 0,y_i^*, 0,\ldots,  \}, $$ $$  i=\overline{r+1, \infty},   $$ 
$$ y_i^*=\left\{\begin{array}{l l} \min\limits_{l \in K_i}\frac{\langle a_0, f_l \rangle }{\langle a_i, f_l \rangle }, & K_i=\{l, \langle a_i, f_l\rangle >0\}, \\ 
 1, & \langle a_i, f_l\rangle  \leq 0,\  \forall l=\overline{1,r},  
                                                 \end{array} \right. $$
$ \{f_1, \ldots, f_k\}$ is a set of linear independent vectors satisfying conditions 
\begin{eqnarray}\label{ap2}                                                 
\langle f_i, a_j\rangle=\delta_{ij}, \quad i,j=\overline{1, r}, \quad \langle f_i, a_j\rangle=0, \quad
j=\overline{1, r}, \ i=\overline{r+1, k}.
\end{eqnarray}
The set of strictly positive solutions of the set of equations (\ref{mapp1}) is given by the formula
\begin{eqnarray}\label{ap3}
z= \sum\limits_{i=r}^{\infty}\gamma_iz_i,
\end{eqnarray}
where the vector  $\gamma=\{\gamma_r, 
\ldots, \gamma_i, \ldots,\}$ satisfies conditions 
$$\sum\limits_{i=r}^{\infty}\gamma_i=1, \quad \gamma_i>0, \quad  i=\overline{r+1, \infty}, \quad \sum\limits_{i=r+1}^{\infty}a_i\gamma_iy_i^* < \infty, $$ 
\begin{eqnarray}\label{ap4}
 \left\langle a_0 - \sum\limits_{i=r+1}^{\infty}a_i\gamma_iy_i^*, f_k \right\rangle>0, \quad k=\overline{1, r}.
\end{eqnarray}
\end{thm}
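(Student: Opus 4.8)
The plan is to read (\ref{mapp1}) as a semi-infinite linear system whose coefficient vectors $a_j$ all lie in the $r$-dimensional space $V=\mathrm{span}(a_1,\dots,a_r)$, and to exploit the biorthogonal family $\{f_l\}$. Since $a_0$ lies in the interior of the cone on $a_1,\dots,a_r$, I would first write $a_0=\sum_{l=1}^r\alpha_l a_l$ with all $\alpha_l>0$; pairing with $f_m$ and using $\langle f_m,a_l\rangle=\delta_{ml}$ gives $\alpha_m=\langle a_0,f_m\rangle>0$. Likewise each $a_i$ with $i>r$ lies in $V$, so $a_i=\sum_{l=1}^r\langle a_i,f_l\rangle a_l$. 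With these two identities the solution property $\sum_j a_j(z_i)_j=a_0$ is a one-line check: for $z_r$ it is $\sum_{l\le r}\langle a_0,f_l\rangle a_l=a_0$, and for $i>r$ the extra term $a_iy_i^*$ cancels $-y_i^*\sum_{l\le r}\langle a_i,f_l\rangle a_l=-y_i^*a_i$. Nonnegativity of $z_i$ then follows from the definition of $y_i^*$: positivity $\langle a_0,f_l\rangle>0$ makes $y_i^*>0$; for $l\in K_i$ the choice $y_i^*\le\langle a_0,f_l\rangle/\langle a_i,f_l\rangle$ forces $(z_i)_l\ge 0$; and for $l\notin K_i$ one has $-\langle a_i,f_l\rangle y_i^*\ge 0$, so $(z_i)_l\ge\langle a_0,f_l\rangle>0$.

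Linear independence of $\{z_i\}_{i\ge r}$ is immediate from their supports: for $i>r$ the vector $z_i$ is the only one carrying the nonzero entry $y_i^*>0$ in coordinate $i$, while $z_r$ vanishes in all coordinates larger than $r$; hence any finite vanishing combination $\sum_i\lambda_iz_i=0$ forces $\lambda_i=0$ for $i>r$ one by one, and then $\lambda_r=0$. As there are infinitely many indices $j$, there are infinitely many such $z_i$.

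For the description (\ref{ap3}) the ``if'' direction is a direct coordinate computation. Writing $Z=\sum_{i\ge r}\gamma_iz_i$, the tail coordinates give $Z_j=\gamma_jy_j^*>0$ for $j>r$, and for $j\le r$, using $\sum_{i\ge r}\gamma_i=1$, one gets $Z_j=\langle a_0-\sum_{i>r}\gamma_iy_i^*a_i,\,f_j\rangle$, which is positive precisely by (\ref{ap4}); the hypothesis $\sum_{i>r}a_i\gamma_iy_i^*<\infty$ guarantees that all these expressions converge, and $\sum_j a_jZ_j=\sum_{i\ge r}\gamma_i\bigl(\sum_j a_j(z_i)_j\bigr)=\sum_{i\ge r}\gamma_ia_0=a_0$ shows $Z$ solves (\ref{mapp1}).

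The substance is the ``only if'' direction. Given a strictly positive solution $\xi=(\xi_j)$, I would regard $(\xi_i)_{i>r}$ as free parameters and recover the first $r$ entries by pairing $\sum_i a_i\xi_i=a_0$ with $f_j$: since $\langle a_i,f_j\rangle=\delta_{ij}$ for $i,j\le r$, this yields $\xi_j=\langle a_0-\sum_{i>r}a_i\xi_i,\,f_j\rangle$, so every solution is captured. Setting $\gamma_i:=\xi_i/y_i^*>0$ for $i>r$ turns $\sum_{i>r}a_i\xi_i<\infty$ (automatic, since the $a_i\ge 0$ make $\sum_i a_i\xi_i=a_0$ an absolutely convergent series) into $\sum_{i>r}a_i\gamma_iy_i^*<\infty$, while $\xi_j>0$ is exactly (\ref{ap4}). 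The one genuinely technical point, which I expect to be the main obstacle, is that $\gamma_r:=1-\sum_{i>r}\gamma_i$ must be well defined, i.e. $\sum_{i>r}\gamma_i<\infty$. I would settle this through the uniform bound $1/y_i^*=\max_{l\in K_i}\langle a_i,f_l\rangle/\langle a_0,f_l\rangle\le C\|a_i\|_1$ with $C=(\min_{l\le r}\langle a_0,f_l\rangle)^{-1}\max_{l\le r}\|f_l\|$, whence $\sum_{i>r}\gamma_i\le C\sum_i\xi_i\sum_m(a_i)_m=C\sum_m(a_0)_m<\infty$, interchanging the sums by nonnegativity of the $a_i$ and using $\sum_i\xi_i(a_i)_m=(a_0)_m$ from (\ref{mapp1}); here the branch $y_i^*=1$ is vacuous, since $a_i\ge 0$, $a_i\ne 0$ forces some $\langle a_i,f_l\rangle>0$, i.e. $K_i\ne\emptyset$. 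With $\gamma_r$ defined and $\sum_{i\ge r}\gamma_i=1$, running the ``if'' computation in reverse gives $\sum_{i\ge r}\gamma_iz_i=\xi$, which completes the characterization.
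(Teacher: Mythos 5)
Your proof is correct and follows essentially the same route as the paper: both use the biorthogonal family $\{f_l\}$ to reduce (\ref{mapp1}) to the identities $\xi_j=\langle a_0-\sum_{i>r}\xi_i a_i,f_j\rangle$, $j=\overline{1,r}$, from which the basic nonnegative solutions $z_i$ and the characterization (\ref{ap3}) are read off. The only substantive difference is that you make explicit the convergence of $\sum_{i>r}\gamma_i$ in the necessity direction (via the bound $1/y_i^*\le C\|a_i\|_1$ together with $\sum_i\xi_i(a_i)_m=(a_0)_m$), a point the paper dismisses as evident; this is a useful filling-in of detail rather than a different approach.
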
 
 \begin{proof} In the Theorem \ref{ttt9}  without loss of generality we assume that $r$ linear independent vectors $a_1, \ldots,a_{r}$ are such that the vector $a_0$ belongs to the interior of the cone generated by these vectors.  If it is not the case and such vectors are  $a_{i_1}, \ldots,a_{i_r},$ then by the renumbering the set of the vectors $a_j, \ j=\overline{1, \infty}, $ we come to the case of the Theorem  \ref{ttt9}.

Let us indicate the necessary conditions of the  existence of  strictly positive solution to the set of equations  (\ref{mapp1}). Due to existence of nonnegative  solution of (\ref{mapp1}),  the series 
 $ \sum\limits_{i=1}^{\infty} \xi_i a_i$ is  convergent one. Since 
  $  a_i \in R_+^d$   we have  that the series  $ \sum\limits_{i=r+1}^{\infty}\xi_i a_i$ is also  convergent one. Denote by $\{f_1, \ldots, f_d\}$  a set of vectors that satisfy conditions (\ref{ap2}). We obtain  that a set of equations  (\ref{mapp1})  is equivalent to the set of equations 
\begin{eqnarray}\label{ap5}
\left\langle \sum\limits_{i=r+1}^{\infty} \xi_i a_i, f_j \right\rangle + \xi_j=\left\langle a_0, f_j   \right\rangle >0, \quad     j=\overline{1,r}. 
\end{eqnarray}
where  $\langle a, b\rangle$  denotes a scalar product of vectors  $a$ and $b.$     
From here we have
\begin{eqnarray}\label{ap6}
\left\langle  a_0 - \sum\limits_{i=r+1}^{\infty}\xi_i  a_i, f_j \right\rangle = \xi_j,  \quad     j=\overline{1,r}. 
\end{eqnarray}
It implies that inequalities   
\begin{eqnarray}\label{ap7}
\left\langle  a_0 - \sum\limits_{i=r+1}^{\infty}\xi_i a_i, f_j \right\rangle >0,  \quad     j=\overline{1,r}, 
\end{eqnarray}
are valid. 
If strictly positive vector $\{\xi_{r+1}, \ldots, \xi_m, \ldots \}$ is such that the  series 
 $ \sum\limits_{i=1}^{\infty}\xi_i a_i$ is convergent one and inequalities  (\ref{ap7}) are valid, then the vector 
 \begin{eqnarray*}\label{ap8} 
 z=\left \{\left\langle  a_0 - \sum\limits_{i=r+1}^{\infty}\xi_ia_i, f_1 \right\rangle, \ldots, \left\langle  a_0 - \sum\limits_{i=r+1}^{\infty} \xi_ia_i, f_r \right\rangle, \xi_{r+1}, \ldots, \xi_l, \ldots, \right\}
\end{eqnarray*}
is a general strictly positive solution of   the set of equations  (\ref{mapp1}). It is evident that nonnegative solution  $z_l$  we obtain from the general strictly positive solution of   (\ref{mapp1}), if to put   $\{\xi_{r+1}, \ldots, \xi_m, \ldots \}$  such that   $\xi_i=0, \ i\neq l, \xi_l=y_l^*.$ 
These solutions are nonnegative and linear independent. It is evident if to choose the vector  
$\gamma=\{\gamma_r, \ldots, \gamma_i, \ldots,\} $ such that 
 \begin{eqnarray}\label{ap9}
 \sum\limits_{i=r+1}^{\infty}a_iy_i^* \gamma_i< \infty, \quad \sum\limits_{i=r}^{\infty}\gamma_i=1, \quad \gamma_i > 0, \quad i=\overline{r, \infty},
\end{eqnarray} 
then we obtain that inequalities  
 \begin{eqnarray}\label{ap10}
 \left\langle  a_0 - \sum\limits_{i=r+1}^{\infty}a_iy_i^* \gamma_i, f_j \right\rangle>0, \quad j=\overline{1,r}.
\end{eqnarray} 
are valid.
From here a vector   $ \sum\limits_{i=r}^{\infty}\gamma_iz_i$ is strictly positive solution of the set of equations  (\ref{mapp1}). 

It is evident that these conditions are also sufficient.
Theorem \ref{ttt9} is proved.
\end{proof}

It is easy to see that the vector $a_0$ belongs to the  interior of the cone generated by vectors $a_j=\{P_i(A_j^n)\}_{i=1}^k, \ j=\overline{1, \infty}.$
 The existence  
of $r$ linear independent subset of vectors $\{a_{i_1},\ldots, a_{i_r} \}$ from the set of vectors  $a_j=\{P_i(A_j^n)\}_{i=1}^k, \ j=\overline{1, \infty},$ such that the vector $a_0$ belongs to the interior of the cone generated by this subset of vectors is the conditions on the set of  measures $\{P_1, \ldots, P_k\}.$ A simple criterion of verifying of belonging to the interior of the cone a certain vector $a_0$ is contained in \cite{Gonchar2}.

At last, let us give an example of measurable space $\{\Omega, {\cal F}\}$ and filtration  on it and also a set of measures $P_1, \ldots, P_k$ satisfying conditions $A$ and $B.$ Let us put $\Omega=[0,1).$  Choose any monotonously increasing sequence  $\{x_k\}_{k=0}^\infty,$ such that $x_0=0, x_k <x_{k+1}, \lim\limits_{k\to \infty}x_k=1.$ Denote by $A_s^1=[a_s^1,b_s^1)=[x_{s-1},x_s), s=\overline{1, \infty}.$ The sets  $A_s^2, s=\overline{1, \infty},$ we construct by dividing in half  intervals $A_s^1$ and so on. Let us give  measures $P_1, \ldots, P_k$ on ${\cal F}_n$ generated by sets $A_s^n, s=\overline{1, \infty}.$ On Borel $\sigma$-algebra ${\cal B}([0,1))$ of the set $[0,1)$ let us give a set of 
measures  $P_1, \ldots, P_k$  by their Radon-Nicodym derivatives $\frac{dP_i}{dP_1}=i x^{i-1}, x \in [0,1), i=\overline{1,k}, $ where $P_1$ is Lebesgue measure on $[0,1).$  Consider restrictions of this measures on the $\sigma$-algebra ${\cal F}_n.$ 
It easy to see that so given measures on ${\cal F}_n$ satisfy condition $B$ with index $i_0=1.$

Applications of the results obtained to Mathematical Finance will be given in separated paper. 

\bibliographystyle{vmsta-mathphys}
\bibliography{biblio}

\begin{thebibliography}{13}
\ifx \bisbn   \undefined \def \bisbn  #1{ISBN #1}\fi
\ifx \binits  \undefined \def \binits#1{#1}\fi
\ifx \bauthor  \undefined \def \bauthor#1{#1}\fi
\ifx \batitle  \undefined \def \batitle#1{#1}\fi
\ifx \bjtitle  \undefined \def \bjtitle#1{#1}\fi
\ifx \bvolume  \undefined \def \bvolume#1{\textbf{#1}}\fi
\ifx \byear  \undefined \def \byear#1{#1}\fi
\ifx \bissue  \undefined \def \bissue#1{#1}\fi
\ifx \bfpage  \undefined \def \bfpage#1{#1}\fi
\ifx \blpage  \undefined \def \blpage #1{#1}\fi
\ifx \burl  \undefined \def \burl#1{\textsf{#1}}\fi
\ifx \doiurl  \undefined \def \doiurl#1{\textsf{#1}}\fi
\ifx \betal  \undefined \def \betal{\textit{et al.}}\fi
\ifx \binstitute  \undefined \def \binstitute#1{#1}\fi
\ifx \binstitutionaled  \undefined \def \binstitutionaled#1{#1}\fi
\ifx \bctitle  \undefined \def \bctitle#1{#1}\fi
\ifx \beditor  \undefined \def \beditor#1{#1}\fi
\ifx \bpublisher  \undefined \def \bpublisher#1{#1}\fi
\ifx \bbtitle  \undefined \def \bbtitle#1{#1}\fi
\ifx \bedition  \undefined \def \bedition#1{#1}\fi
\ifx \bseriesno  \undefined \def \bseriesno#1{#1}\fi
\ifx \blocation  \undefined \def \blocation#1{#1}\fi
\ifx \bsertitle  \undefined \def \bsertitle#1{#1}\fi
\ifx \bsnm \undefined \def \bsnm#1{#1}\fi
\ifx \bsuffix \undefined \def \bsuffix#1{#1}\fi
\ifx \bparticle \undefined \def \bparticle#1{#1}\fi
\ifx \barticle \undefined \def \barticle#1{#1}\fi
\ifx \bconfdate \undefined \def \bconfdate #1{#1} \fi
\ifx \botherref \undefined \def \botherref #1{#1} \fi
\ifx \url \undefined \def \url#1{\textsf{#1}} \fi
\ifx \bchapter \undefined \def \bchapter#1{#1} \fi
\ifx \bbook \undefined \def \bbook#1{#1} \fi
\ifx \bcomment \undefined \def \bcomment#1{#1} \fi
\ifx \oauthor \undefined \def \oauthor#1{#1} \fi
\ifx \citeauthoryear \undefined \def \citeauthoryear#1{#1} \fi
\ifx \endbibitem  \undefined \def \endbibitem {}\fi
\ifx \bconflocation  \undefined \def \bconflocation#1{#1} \fi
\ifx \arxivurl  \undefined \def \arxivurl#1{\textsf{#1}} \fi
\csname PreBibitemsHook\endcsname

\bibitem{Bouchard1}
\begin{barticle}
\bauthor{\bsnm{Bouchard}, \binits{B.}},
\bauthor{\bsnm{Nutz}, \binits{M.}}:
\batitle{Arbitrage and duality in nondominated discrete-time models}.
\bjtitle{The Annals of Applied Probability}
\bvolume{25.2},
\bfpage{823}--\blpage{859}
(\byear{2015})
\end{barticle}
\endbibitem

\bibitem{KarouiQuenez}
\begin{barticle}
\bauthor{\bsnm{El~Karoui}, \binits{N.}},
\bauthor{\bsnm{Quenez}, \binits{M.C.}}:
\batitle{Dynamic programming and pricing of contingent claims in an incomplete
  market}.
\bjtitle{SIAM J. Control Optimizat.}
\bvolume{33},
\bfpage{27}--\blpage{66}
(\byear{1995})
\end{barticle}
\endbibitem

\bibitem{FolmerKabanov1}
\begin{botherref}
\oauthor{\bsnm{Follmer}, \binits{H.}},
\oauthor{\bsnm{Kabanov}, \binits{Y.M.}}:
Optional decomposition theorems in discrete time.
Atti del convegno in onore di Oliviero Lessi, Padova, 25-26 marzo,
47--68
(1996)
\end{botherref}
\endbibitem

\bibitem{FolmerKabanov}
\begin{barticle}
\bauthor{\bsnm{Follmer}, \binits{H.}},
\bauthor{\bsnm{Kabanov}, \binits{Y.M.}}:
\batitle{Optional decomposition and lagrange multipliers}.
\bjtitle{Finance Stochast.}
\bvolume{2},
\bfpage{69}--\blpage{81}
(\byear{1998})
\end{barticle}
\endbibitem

\bibitem{FolmerKramkov1}
\begin{barticle}
\bauthor{\bsnm{Follmer}, \binits{H.}},
\bauthor{\bsnm{Kramkov}, \binits{D.O.}}:
\batitle{Optional decomposition theorem under constraints}.
\bjtitle{Probability Theory and Related Fields}
\bvolume{109},
\bfpage{1}--\blpage{25}
(\byear{1997})
\end{barticle}
\endbibitem

\bibitem{Gonchar2}
\begin{bbook}
\bauthor{\bsnm{Gonchar}, \binits{N.S.}}:
\bbtitle{Mathematical Foundations of Information Economics}.
\bpublisher{Bogolyubov Inst. for Theoret. Phys., Kiev}
(\byear{2008})
\end{bbook}
\endbibitem

\bibitem{Gonchar514}
\begin{barticle}
\bauthor{\bsnm{Gonchar}, \binits{N.S.}}:
\batitle{Dynamical risk model with investment in assets}.
\bjtitle{Problems of control and informatics}
\bvolume{3},
\bfpage{109}--\blpage{127}
(\byear{2014})
\end{barticle}
\endbibitem

\bibitem{Gonchar555}
\begin{barticle}
\bauthor{\bsnm{Gonchar}, \binits{N.S.}}:
\batitle{Mathematical model of banking operation}.
\bjtitle{Cybernetics and System Analysis}
\bvolume{51},
\bfpage{378}--\blpage{399}
(\byear{2015}).
doi:\doiurl{10.1007/s10559-015-9730-0}
\end{barticle}
\endbibitem

\bibitem{Gonchar557}
\begin{barticle}
\bauthor{\bsnm{Gonchar}, \binits{N.S.}},
\bauthor{\bsnm{Terentieva}, \binits{L.S.}}:
\batitle{Default risk valuation of the firm with the special process of
  internal yield}.
\bjtitle{Journal of Automation and Information Sciences}
\bvolume{40},
\bfpage{57}--\blpage{71}
(\byear{2008})
\end{barticle}
\endbibitem

\bibitem{Kallianpur}
\begin{bbook}
\bauthor{\bsnm{Kallianpur}, \binits{G.}}:
\bbtitle{Stochastic Filtering Theory}.
\bpublisher{Springer}
(\byear{1980})
\end{bbook}
\endbibitem

\bibitem{Kramkov}
\begin{barticle}
\bauthor{\bsnm{Kramkov}, \binits{D.O.}}:
\batitle{Optional decomposition of supermartingales and hedging in incomplete
  security markets}.
\bjtitle{Probab. Theory Relat. Fields}
\bvolume{105},
\bfpage{459}--\blpage{479}
(\byear{1996})
\end{barticle}
\endbibitem

\bibitem{Meyer1}
\begin{barticle}
\bauthor{\bsnm{Meyer}, \binits{P.A.}}:
\batitle{A decomposition theorem for supermartingales}.
\bjtitle{Illinois J. Math.}
\bvolume{7},
\bfpage{1}--\blpage{17}
(\byear{1963})
\end{barticle}
\endbibitem

\bibitem{Meyer2}
\begin{barticle}
\bauthor{\bsnm{Meyer}, \binits{P.A.}}:
\batitle{A decomposition for supermartingales: the uniqueness theorem}.
\bjtitle{Illinois J. Math.}
\bvolume{6},
\bfpage{193}--\blpage{205}
(\byear{1972})
\end{barticle}
\endbibitem

\end{thebibliography}

\end{document}